\documentclass[oneside,english]{amsart}
\usepackage[T1]{fontenc}
\usepackage[utf8]{inputenc}
\usepackage{mathrsfs}
\usepackage{amsbsy}
\usepackage{amstext}
\usepackage{amsthm}
\usepackage{amssymb}
\usepackage{wasysym}
\usepackage{esint}
\usepackage{calc}
\usepackage{hyperref}
\usepackage[colorinlistoftodos]{todonotes}
\usepackage{footmisc}
\usepackage{colonequals}
\makeatletter
\usepackage[top=1.3in,bottom=1.4in,left=1.0in,right=1.0in]{geometry}

\usepackage{tikz}
\def \no{\nonumber}

\newtheorem{remark}{Remark}[section]
\newtheorem{thm}{Theorem}[section]
\newtheorem{lem}[thm]{Lemma}
\newtheorem{proposition}[thm]{Proposition}

\newtheorem{cor}{Corollary}[section]
\newtheorem{defn}{Definition}[section]
\numberwithin{equation} {section}

\usepackage{babel}

\makeatother

\allowdisplaybreaks

\begin{document}
\title{A new boundary mass for asymptotically flat half-manifolds}
\keywords{Asymptotically flat manifolds, Non-compact boundary, Gauss–bonnet–chern curvature, Mass in general relativity, Positive mass theorem}

\author{Guofang Wang}
\address{Albert-Ludwigs-Universit\"at, Mathematisches Institut, Ernst-Zermelo-Str.~1, 79104 Freiburg, Germany}
\email{guofang.wang@math.uni-freiburg.de}

\author{Wei Wei}
\address{School of Mathematics, Nanjing University, Nanjing 210093, P.R. China}
\email{wei\_wei@nju.edu.cn}
%\thanks{Wei Wei is partially supported by NSFC No. 12201288, BK20220755
%and the Alexander von Humboldt Foundation. }

\begin{abstract}
We introduce a boundary analogue of the Gauss--Bonnet--Chern mass for asymptotically flat half-manifolds with non-compact boundary. We prove that this mass is well defined and establish the corresponding positive mass theorems for graphical and conformally flat graphs. Also we provide a Penrose-type inequality for the mass $\mathfrak{m}_{a,B}(g)$.
\end{abstract}

\maketitle

\section{Introduction}

A complete manifold $(M^{n},g)$ is called asymptotically flat (AF) of order $\tau$ (with one end) if there exist a compact set $K\subset M$ and $R>0$ such that $M\setminus K$ is diffeomorphic to $\mathbb{R}^{n}\setminus\overline{B_{R}(0)}$ and, in the induced Euclidean coordinates, the metric satisfies
\[
g_{ij}=\delta_{ij}+\sigma_{ij}
\]
with
\begin{equation}
\left|\sigma_{ij}\right|+r\left|\partial\sigma_{ij}\right|+r^{2}\left|\partial^{2}\sigma_{ij}\right|=O\left(r^{-\tau}\right),\label{eq:asymptotic behavior}
\end{equation}
where $r=|x|$ and $\partial$ denotes the standard derivative operator on $\mathbb{R}^{n}$.

The ADM mass is defined by
\[
\mathfrak{m}_{ADM}=c(n)\lim_{r\rightarrow+\infty} \int_{S_{r}^{n-1}}\left(g_{ij,j}-g_{jj,i}\right)\nu^{i}d\sigma,
\]
where $\nu$ is the outward unit normal to the coordinate sphere $S^{n-1}_r$. The integrand is tied to the scalar curvature; more precisely,
\[
R(x)=\partial_i\bigl(g_{ij,j}-g_{jj,i}\bigr)+O\bigl(r^{-2\tau-2}\bigr),\qquad r=|x|\ \text{large}.
\]
Introduced by Arnowitt--Deser--Misner \cite{ADM}, the ADM mass is a fundamental invariant in differential geometry and general relativity. Bartnik showed that $\mathfrak{m}_{ADM}$ is well defined and coordinate independent provided $R\in L^{1}(M)$ and the decay is sufficiently fast, namely $\tau>\frac{n-2}{2}$.

The positive mass theorem (PMT) asserts that an AF manifold with appropriate decay and nonnegative scalar curvature has nonnegative ADM mass. Schoen--Yau \cite{SY} proved the PMT for $n\le 7$ via minimal surface methods (the borderline case $n=8$ involves possible isolated singularities of minimizing hypersurfaces; see Li--Wang \cite{LiWang2022}), and Witten \cite{Witten} proved it for spin manifolds using a Bochner argument. Recently, in dimensions $9$ and $10$, the Schoen--Yau approach was extended using the generic regularity theory of Chodosh--Mantoulidis--Schulze \cite{CMS2023}; later, Chodosh--Mantoulidis--Schulze--Wang \cite{CMSW2025} pushed this further up to dimension $11$. Very recently, Bi--Hao--He--Shi--Zhu \cite{bi-etal2026} proved it up to dimension $19$.
Finally, Brendle--Wang \cite{brendleWang2026} established the PMT in all dimensions;

Beyond its origins in physics, the PMT has led to a rich array of techniques and applications (e.g. to the Yamabe problem). For three-dimensional manifolds, several approaches are now available; see \cite{AMO,BKKS,Miao,Ste,LiY}. Many related invariants have also been introduced to capture asymptotic geometry in other settings (for instance, asymptotically locally Euclidean geometry \cite{HL}) and to develop new applications; see \cite{chenshi1,chenshi2} and the survey of Herzlich \cite{Herz}. 

Most classical invariants are built from first-order expressions in the metric and are closely related to scalar curvature. A natural question is whether one can define meaningful analogues under weaker (``slow'') decay assumptions. For AF manifolds with slow decay, Ge--Wang--Wu \cite{GWW1} (and independently Li--Nguyen \cite{LN}) introduced the Gauss--Bonnet--Chern (GBC) mass
\[
\mathfrak{m}_{GBC}(g):=c(n,a)\lim_{R\rightarrow\infty}\int_{S_{R}}g_{jk,l}P_{a}^{ijkl}\nu_{i}d\sigma,
\]
where $P$ is a $4$-tensor defined in \eqref{eq:definition of P} and $c(n,a)>0$ depends only on $n$ and $a$. As in the ADM case, the corresponding flux is governed by a curvature quantity, namely the Gauss--Bonnet--Chern curvature $L_a$, which satisfies
\[
L_a=2\partial_i\bigl(g_{jk,l}P_{a}^{ijkl}\bigr)+O\bigl(r^{-(a+1)\tau-2a}\bigr).
\]
Ge--Wang--Wu \cite{GWW1} proved that $\mathfrak{m}_{GBC}$ is well defined and coordinate independent when $L_a\in L^{1}(M)$ and $\tau>\tau_a:=\frac{n-2a}{a+1}$ (note that $\tau_a$ decreases as $a$ increases). Alternative formulations were obtained in \cite{Wang-Wu}, and related slow-decay masses in the asymptotically hyperbolic setting were developed in \cite{GWW2}. Positive mass theorems for $\mathfrak{m}_{GBC}$ were established for AF graphs and conformally flat manifolds \cite{GWW1,GWW2}. Further developments include a GBC center of mass \cite{Herz1} and the constancy of the GBC mass along the Ricci flow \cite{Ho}. Other slow-decay masses have also appeared, for instance those related to the GJMS operator and $Q$-curvature \cite{Michel1,Michel2,ALL}.

A complete manifold $(M^{n},g)$ is called an asymptotically flat half-manifold of order $\tau$ (with one end) if there exist a compact set $K\subset M$ and $R>0$ such that $M\setminus K$ is diffeomorphic to $\overline{\mathbb{R}^{n}_{+}}\setminus\overline{B^{+}_{R}(0)}$ and, in the induced coordinates on $\overline{\mathbb{R}^{n}_{+}}$, the metric satisfies
\[
g_{ij}=\delta_{ij}+\sigma_{ij}
\]
with
\begin{equation}
\left|\sigma_{ij}\right|+r\left|\partial\sigma_{ij}\right|+r^{2}\left|\partial^{2}\sigma_{ij}\right|=O\left(r^{-\tau}\right),\label{eq:asymptotic behavior-half}
\end{equation}
where $\mathbb{R}_{+}^{n}=\{(x^{1},\cdots,x^{n})\mid x^{n}>0\}$, $B_{R}^{+}(0)=\{x\in\mathbb{R}^{n}\mid |x|<R,\ x^{n}>0\}$, and $r=|x|$.

%Ge, Wang, and Wu \cite{GWW1} introduced a mass for asymptotically
%flat manifolds by using the Gauss-Bonnet-Chern curvature, and showed
%a positive mass for asymptotically flat graphs and established some connections with various geometric integrals.

In this paper we introduce a boundary analogue of the Gauss--Bonnet--Chern mass for asymptotically flat half-manifolds with non-compact boundary. The construction is based on the Gauss--Bonnet--Chern (Lovelock) curvature in the interior together with a natural boundary curvature term, defined as follows:
\begin{equation}
L_{a}=\frac{1}{2^{a}}\sum_{i,j=1}^{n}\delta_{j_{1}\cdots j_{2a}}^{i_{1}\cdots i_{2a}}R_{i_{1}i_{2}}{}^{j_{1}j_{2}}\cdots R_{i_{2a-1}i_{2a}}{}^{j_{2a-1}j_{2a}},\label{eq:definition of La}
\end{equation}

\begin{equation}
\mathscr{B}^{a-1}=\frac{1}{2^{a-1}}\sum_{i,j=1}^{n-1}\delta_{j_{1}\cdots j_{2a-1}}^{i_{1}\cdots i_{2a-1}}R_{i_{1}i_{2}}^{\quad j_{1}j_{2}}\cdots R_{i_{2a-3}i_{2a-2}}{}^{j_{2a-3}j_{2a-2}}L_{i_{2a-1}}^{j_{2a-1}},\label{eq:definition of Ba-1}
\end{equation}
where $R_{ij}{}^{kl}$ is the Riemann curvature tensor of $g$ and $\nabla$ is the Levi--Civita connection. Along $\partial M$, we write
$$L_{\beta}^{\alpha}=g^{\alpha\gamma}\langle\nabla_{\partial_{\gamma}}\nu,\partial_{\beta}\rangle$$
for the second fundamental form of $\partial M\subset M$, where $\nu$ is the outward unit normal. Here $\sum_{i,j=1}^{n}$ (resp. $\sum_{i,j=1}^{n-1}$) denotes summation over the corresponding multi-indices $i_{1},\dots,i_{2a}$, $j_{1},\dots,j_{2a}$ (resp. $i_{1},\dots,i_{2a-1}$, $j_{1},\dots,j_{2a-1}$).

Throughout the paper we assume $a<\frac{n}{2}$ unless stated otherwise.

Before defining the mass, we introduce the following boundary notation:
\begin{equation}
P_{a}^{stlm}=\frac{1}{2^{a}}\sum_{i,j=1}^{n}\delta_{j_{1}\cdots j_{2a-2}j_{2a-1}j_{2a}}^{i_{1}\cdots i_{2a-2}st}R_{i_{1}i_{2}}{}^{j_{1}j_{2}}\cdots R_{i_{2a-3}i_{2a-2}}{}^{j_{2a-3}j_{2a-2}}g^{j_{2a-1}l}g^{j_{2a}m},\label{eq:definition of P}
\end{equation}
and for $a\ge 2$
\begin{align*}
  Q_{a,a-2}^{\eta\beta\alpha\gamma}
=  \frac{1}{2^{a-2}}\sum_{i,j=1}^{n-1}\delta_{jj_{1}\cdots j_{2a-4}j_{2a-3}j_{2a-2}}^{ii_{1}\cdots i_{2a-4}\eta\beta}L_{i}^{j}R_{i_{1}i_{2}}{}^{j_{1}j_{2}}\cdots R_{i_{2a-5}i_{2a-4}}{}^{j_{2a-5}j_{2a-4}}g^{j_{2a-3}\alpha}g^{j_{2a-2}\gamma}.
\end{align*}

\begin{defn}\label{mass defi}
Let $n\ge 2a+1$. Assume that $(M^{n},g)$ is an asymptotically flat half-manifold
with non-compact boundary $\partial M$ of decay order $\tau>\frac{n-2a}{a+1}$
and $\int_{M}L_{a}dv_{g}+2a\int_{\partial M}\mathscr{B}^{a-1}d\sigma_{g}$
is finite. We define the boundary Gauss-Bonnet-Chern mass by
\begin{align}
\mathfrak{m}_{a,B}(g) & :=\lim_{R\rightarrow\infty}c(n,a)\bigg(\int_{S_{R}^{+}}g_{jk,l}P_{a}^{ijkl}\nu_{i}d\sigma+\int_{S_{R}^{n-2}}g_{n\beta}P_{a}^{n\beta n\alpha}\mu_{\alpha}dl\label{eq:definition of boundary mass}\\
 & \quad\quad\quad\quad\quad\quad\quad\,+(a-1)\int_{S_{R}^{n-2}}Q_{a,a-2}^{\eta\beta\alpha\gamma}g_{\beta\alpha,\gamma}\mu_{\eta}dl\bigg),\nonumber 
\end{align}
where $S_{R}^{+}$ is a large coordinate hemisphere of radius $R$
with outward unit normal $\nu$ in $\mathbb{R}^{n}$ and $d\sigma$
is the area element of $S_{R}^{+}$ in $\mathbb{R}^{n}$; $\mu$ is
the outward pointing unit co-normal to $S_{R}^{n-2}=\partial S_{R}^{+}$,
oriented as the boundary of the bounded region $\Sigma_{R}\subset\partial M$,
and $dl$ is the area element of $S_{R}^{n-2}$ in $\mathbb{R}^{n-1}$;
along $\partial M$, $\{\partial_{\alpha}\}_{\alpha}$ span $T\partial M$
while $\partial_{n}$ points inward on $\Sigma$; $g_{ij,k}=\partial_{k}g_{ij}$
and $g_{\beta\alpha,\gamma}=\partial_{\gamma}g_{\beta\alpha}$ are
the ordinary partial derivatives; $c(n,a)=\frac{(n-2a)!}{2^{a}(n-1)!\omega_{n-1}}$
and $\omega_{n-1}$ is the area of unit sphere $\mathbb{S}^{n-1}$.
%The definition of $P_a^{ijkl}$ will be introduced in Section
%2, see (\ref{eq:definition of P}). 
\end{defn}

When $a=1$, we convent that $(a-1)Q_{a,a-2}^{\eta\beta\alpha\gamma}=0$. 
Up to a constant multiple, the definition coincides with the following mass  related
to scalar curvature in \cite{ABL}, 
\[
\mathfrak{m}_{ABL}=\lim_{r\rightarrow+\infty}\left\{ \int_{S_{r}^{+}}\left(g_{ij,j}-g_{jj,i}\right)\nu^{i}d\mathcal{S}_{r,+}^{n-1}+\int_{S_{r}^{n-2}}g_{\alpha n}\mu^{\alpha}d\mathcal{S}_{r}^{n-2}\right\},
\]
which was introduced by
Almarz, Barbosa, and De Lima \cite{ABL}. 

Now we state our main theorem as follows.
\begin{thm}
\label{thm:Main thm1-}Suppose that $(M^{n},g)$ is an asymptotically
flat half-manifold of decay order $\tau>\frac{n-2a}{a+1}$ and assume that $L_{a}\in L^1(M)$ and  $\mathscr{B}^{a-1}\in L^1(\partial M)$, then the mass $\mathfrak{m}_{a,B}(g)$  is well-defined
and depends only on $g.$
\end{thm}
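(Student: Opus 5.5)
The proof has two parts. First we show the limit in \eqref{eq:definition of boundary mass} exists for a fixed asymptotic chart. Then we show it does not depend on the chart. Both parts follow the strategy of Ge--Wang--Wu \cite{GWW1} for the interior GBC mass and of Almaraz--Barbosa--de Lima \cite{ABL} for the boundary ADM-type mass.

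\textbf{Step 1: the integrand is asymptotically a divergence in the interior and on the boundary.} The interior identity is the one already quoted:
\[
L_a=2\partial_i\bigl(g_{jk,l}P_a^{ijkl}\bigr)+O\bigl(r^{-(a+1)\tau-2a}\bigr).
\]
It holds because $P_a$ is divergence free (Bianchi identity) and because $R$ equals its linear part up to $O(r^{-2\tau-2})$. For $\tau>\tau_a$ the error term is integrable on $M$.

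On $\Sigma=\partial M=\{x^n=0\}$ we need a companion identity for $\mathscr{B}^{a-1}$. To leading order $L_\alpha^\beta=-\tfrac12(g_{\alpha n,\beta}+g_{\beta n,\alpha}-g_{\alpha\beta,n})$, up to quadratic terms. We then expand $2a\,\mathscr{B}^{a-1}$ plus the restriction of the interior flux $-2g_{jk,l}P_a^{njkl}$ (the outward normal is $-\partial_n$). The claim to establish is that this sum equals a tangential divergence,
\[
\partial_\alpha\Bigl(2g_{n\beta}P_a^{n\beta n\alpha}+2(a-1)Q_{a,a-2}^{\alpha\beta\gamma\delta}g_{\beta\gamma,\delta}\Bigr),
\]
up to an error $O(r^{-(a+1)\tau-2a})$, which is integrable on the $(n-1)$-dimensional boundary since $(a+1)\tau+2a>n-1$. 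The proof of this claim uses two ingredients:
\begin{itemize}
\item the antisymmetry of the generalized Kronecker delta, which kills the terms with two normal derivatives;
\item the Codazzi equation, which lets tangential derivatives of $L$ be exchanged for curvature components $R_{n\beta\gamma\delta}$. This is how the $Q$-term arises, with the coefficient $a-1$ counting the curvature factors.
\end{itemize}
This computation is the main obstacle. The bookkeeping of which index slots take the value $n$ inside $\delta^{i_1\cdots}_{j_1\cdots}$ must be organized carefully, by splitting each multi-index sum according to how many indices equal $n$. We will check the result against the case $a=1$, where it must reduce to the Almaraz--Barbosa--de Lima identity.

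\textbf{Step 2: existence of the limit.} Apply the divergence theorem on the half-annulus $A_{R_1,R_2}=B^+_{R_2}\setminus B^+_{R_1}$. Its boundary consists of two hemispheres and the flat piece $\Sigma_{R_2}\setminus\Sigma_{R_1}$. Then apply the tangential divergence theorem on $\Sigma_{R_2}\setminus\Sigma_{R_1}$, whose boundary is $S^{n-2}_{R_2}\cup S^{n-2}_{R_1}$. Adding the two identities from Step 1 shows that the difference of the bracket in \eqref{eq:definition of boundary mass} at $R_2$ and at $R_1$ equals
\[
\int_{A_{R_1,R_2}}L_a\,dv+2a\int_{\Sigma_{R_2}\setminus\Sigma_{R_1}}\mathscr{B}^{a-1}\,d\sigma+o(1).
\]
We replace $dv_g$ by the Euclidean measure, which costs an integrable error. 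Since $L_a\in L^1(M)$ and $\mathscr B^{a-1}\in L^1(\partial M)$, this difference tends to $0$. So the limit is Cauchy and exists.

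\textbf{Step 3: independence of the chart.} Take two asymptotic charts that both preserve the half-space structure and satisfy \eqref{eq:asymptotic behavior-half}. Following Bartnik and \cite{GWW1}, their transition map is $y=Ox+z(x)$, where $O$ is an orthogonal matrix preserving $\mathbb{R}^n_+$ (so it fixes $e_n$ up to $O(n-1)$) and $z$ satisfies $\partial z=O(r^{-\tau})$ and $\partial^2 z=O(r^{-\tau-1})$. The effect of the rotation $O$ is handled by the tensorial nature of the integrands and the rotation invariance of the hemisphere. For the perturbation $z$, the change of $g_{jk,l}$ is $\partial^2 z$ plus higher-order terms. Contracted against $P_a$, which carries $a-1$ curvature factors of size $O(r^{-\tau-2})$, the change reduces up to $o(1)$ errors to integrals of the form $\partial_m(\cdots)$ antisymmetrized. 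On $S_R^+$ these become boundary integrals over $S_R^{n-2}$ by Stokes. Those boundary integrals cancel exactly against the variation of the two $S_R^{n-2}$ terms; this is again an index computation parallel to Step 1.

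We also need that the limit does not depend on the exhaustion: surfaces that are large hemisphere-like in one chart differ from coordinate hemispheres in the other. This follows from the Step 2 argument applied to the region between the two surfaces, using the integrability of $L_a$ and $\mathscr{B}^{a-1}$.
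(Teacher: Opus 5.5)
Your plan is essentially the paper's proof. Your boundary identity is exactly Theorem~\ref{thm:key boundary equality}, which the paper obtains by linearizing $\mathscr{B}^{a-1}\approx Q_{a,a-2}^{\eta\beta\alpha\gamma}g_{\beta\alpha,\gamma\eta}$ and using $R_{i_1i_2}{}^{nj_2}\approx L^{j_2}_{i_2,i_1}-L^{j_2}_{i_1,i_2}$. Existence comes from the same half-annulus argument. Chart independence is split the same way: a same-surface term, reduced by Stokes to $S_R^{n-2}$ and cancelled using $\partial_\alpha z^n=0$ on $\partial M$ (which your half-space-preserving charts supply), and a different-surface term handled by the divergence identity.

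Two small corrections. First, with the paper's outward-normal convention, $L_{\alpha\beta}=+\tfrac12(g_{n\alpha,\beta}+g_{n\beta,\alpha}-g_{\alpha\beta,n})+O(r^{-2\tau-1})$; with your sign, the $\mathscr{B}^{a-1}$ and $Q$ terms in your stated identity would come out reversed. Second, the boundary remainder is only $O(r^{-(a+1)\tau-2a+1})$, not $O(r^{-(a+1)\tau-2a})$, but it is still integrable on the $(n-1)$-dimensional boundary precisely because $\tau>\frac{n-2a}{a+1}$.
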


%. The mass have defined a mass for asymptotically flat half-manifolds $M$ with non-compact boundary $\partial M$ with respect to scalar curvature as follows:

\iffalse
where $S_{r,+}^{n-1}$ is a large coordinate hemisphere of radius
$r$ with outward unit normal $\nu$, and $\eta$ is the outward pointing
unit co-normal to $S_{r}^{n-2}=\partial S_{r,+}^{n-1}$, oriented
as the boundary of the bounded region $\Sigma_{r}\subset\partial M$;
along $\partial M$, $\{\partial_{\alpha}\}_{1\le\alpha\le n-1}$
span $T\partial M$ while $\partial_{n}$ points inward in $M$; $g_{ij,k}=\partial_{k}g_{ij}$
is the ordinary partial derivatives. $\mathfrak{m}_{(M,g)}$ is an
invariant of the asymptotic geometry of $M.$ 
\fi

\iffalse
For scalar curvature case on manifolds without non-compact boundary, Bartnik \cite{Bartnik}
showed that the ADM mass is an invariant, independent of the choice
of asymptotically flat coordinates. Ge, Wang, and Wu \cite{GWW1} proved that on manifolds without non-compact boundary, the mass related to Lovelock curvature is an invariant of asymptotically flat coordinates. 
\fi

Boundary problems in geometric analysis have also received considerable attention in recent years, and boundary versions of mass have begun to play roles analogous to the classical ADM mass. Motivated by the Yamabe problem with prescribed boundary mean curvature, Almaraz--Barbosa--de Lima \cite{ABL} proved a positive mass theorem for the ABL mass when $3\le n\le 7$, and also for $n\ge 3$ under the spin assumption.

Almaraz--de Lima--Mari \cite{ALM} studied the ABL mass for initial data sets with non-compact boundary in a spacetime setting; see also the survey \cite{Lima} by de Lima. A major development is the Riemannian Penrose inequality for the ABL mass, proved by K\"orber \cite{Kor} and by Eichmair--K\"orber \cite{EK1}; the graphical case with free boundary was treated by Barbosa--Meira \cite{BM}. K\"orber's proof uses a weak free-boundary inverse mean curvature flow, building on earlier work of Marquardt \cite{Mar3}. This Penrose inequality is closely related to Huisken's conjecture for asymptotically flat support surfaces, studied by Volkmann \cite{Volk} and resolved by Eichmair--K\"orber \cite{EK}.

%the development of theory about the free boundary anisotropic minimal hypersurface or minimal hypersurface with constant contact angle is also closely connected to the ABL mass, which also links the exterior mass defined by \cite{Volkman} when the non-compact boundary of manifolds is an asymptotically flat support surface.  Naturally, the Penrose-type inequality related to the ABL mass and exterior mass was discussed by mathematicians.  With the help of inverse mean curvature flow with constant contact angle, Eichmair and K\"orber confirmed a Penrose-type conjecture of Huisken about the exterior mass, see \cite{EK}.

%The exterior mass Assuming that there exists a free boundary minimal surface horizon, K\"orber \cite{Kor} showed a Penrose inequality for 3-dimensional asymptotically flat spaces with non-compact boundary, which states that $m_{ABL}$ is bounded below in terms of the area of the free boundary minimal surface.  Lima, Marquedut paper...

There is also a substantial literature on special classes of AF manifolds (with compact boundary, or with empty boundary), notably AF graphs and conformally flat manifolds. Lam \cite{Lam} proved the positive mass theorem and the Penrose inequality for AF graphs; see \cite{HW1,HW2,LG1,LG2} for further generalizations. In the conformally flat setting, Penrose-type inequalities were obtained by Freire--Schwartz \cite{FS} and by Jauregui \cite{Jeff}. Schwartz \cite{Schwartz} related the mass to capacity and established a volumetric Penrose inequality for conformally flat manifolds.

In this work we investigate the boundary GBC mass $\mathfrak{m}_{a,B}$ in two settings where the relevant curvature quantities admit particularly transparent formulas: graphs of asymptotically flat functions and conformally flat manifolds. In both cases, the Gauss--Bonnet--Chern curvature can be expressed in terms of $\sigma_k$-type operators that are central in fully nonlinear PDE.

We begin with the graphical setting.
\begin{defn}
\label{def: assumption of f-1}Let $f:\mathbb{R}_{+}^{n}\rightarrow\mathbb{R}$
be a smooth function and let $f_{i},f_{ij}$ and $f_{ijk}$ denote
the first, the second and the third derivatives of $f$ respectively.
$f$ is called an asymptotically flat function of order $\tau$ if
\[
f_{i}(x)=O\left(|x|^{-\tau/2}\right),\quad|x|\left|f_{ij}(x)\right|+|x|^{2}\left|f_{ijk}(x)\right|=O\left(|x|^{-\tau/2}\right)
\]
 at infinity for some $\tau>(n-2a)/(a+1)$.
\end{defn}
Let $\left(\mathbb{R}_{+}^{n}\backslash\Omega,g\right)=\left(\mathbb{R}_{+}^{n}\backslash\Omega,g_{\mathbb{E}}+df\otimes df\right)$ ($\Omega$ possibly empty)
be the graph of a smooth asymptotically flat function $f:\mathbb{R}_{+}^{n}\rightarrow\mathbb{R}$
in Definition \ref{def: assumption of f-1} and $Df=(f_{1},\cdots,f_{n}).$

%We denote $\kappa_f=(\kappa_1,\cdots, \kappa_{n})$ as the principal curvatures of the graph $f$ and 

Define 
$$\sigma_k(\lambda)=\sum_{1\le i_1<\cdots<i_k\le n}\lambda_{i_1}\cdots \lambda_{i_k},$$
where $\lambda=(\lambda_1,\cdots, \lambda_n)\in \mathbb{R}^n.$ 
 
Let $\sigma_k(B):=\sigma_k(\lambda(B))$, where $\lambda(B):=(\lambda_1,\cdots, \lambda_n)$ are the eigenvalues of  matrix $B_{n\times n}$.
The $\sigma_k$ operator always connects with 
 the  positive-cone $\Gamma_a^+$ as follows:
 $$\Gamma_a^+=\{\lambda=(\lambda_1,\cdots, \lambda_n)\in \mathbb{R}^n|\sigma_1(\lambda)>0,\cdots, \sigma_a(\lambda)>0\}.$$

From Reilly's classical work \cite{Reilly}, we know if $k$ is even, then \[
\sigma_{k}(S)=\frac{1}{k!}L_{k/2},
\]
where $S_{ij}$ is the second fundamental form of hypersurface $M=(x,f(x))\subset\mathbb{R}^{n+1}$, 
and  now we state the formula of the boundary mass under this graphical setting.

For the reader's convenience, we  denote the second fundamental form of $\partial \Omega$ as follows:
 Let $\nu$ be the unit outer normal vector of $\partial \Omega$ pointing inward $\mathbb{R}^n\backslash\Omega$.
And  $h_{\alpha \beta}$  is the second fundamental form of $\partial \Omega$ in $\mathbb{R}^n$, that is $h_{\alpha \beta}=\langle D_{e_\alpha}e_\beta, -\nu\rangle$ and $e_\alpha,e_\beta$ are unit tangential vectors on $\partial \Omega$. The principal curvatures are denoted by $\kappa_{\partial \Omega}$ and $\sigma_1({\kappa_{\partial \Omega}})=\sigma_1(h_{\alpha \beta})$. We use similar rules of notations for the curvature of  other domains. 

\begin{thm}
\label{thm:Main thm 2}Let $f:\mathbb{R}_{+}^{n}\backslash\Omega\rightarrow\mathbb{R}$
be an asymptotically flat function of order $\tau$ on $\mathbb{R}_{+}^{n}\backslash\Omega$
of class $C^{2}$ up to boundary, with the assumption in Definition
\ref{def: assumption of f-1}. Let $\left(\mathbb{R}_{+}^{n}\backslash\Omega,g\right)$
be the graph of $f$. We assume that $f$ is constant on $\partial\Omega\backslash\partial\mathbb{R}_{+}^{n}$
and $|Df|\rightarrow\infty$ on $\partial\Omega\backslash\partial\mathbb{R}_{+}^{n}$ with the unit outer normal vector $\nu=\frac{Df}{|Df|}$  pointing outward to $\Omega$.
Take $\Omega'=\overline{\Omega}\cap\partial\overline{\mathbb{R}_{+}^{n}}.$
We have 
\begin{align*}
\frac{2\mathfrak{m}_{a,B}(g)}{(2a)!c(n,a)}=  & \int_{\mathbb{R}_{+}^{n}\backslash\Omega}\frac{1}{(2a)!}L_{a}dx+\frac{1}{2a}\int_{\partial\Omega\cap\mathbb{R}_{+}^{n}}\sigma_{2a-1}(\kappa_{\partial\Omega})d\sigma\\
&+\int_{\partial\mathbb{R}_{+}^{n}\backslash\Omega'}\sigma_{2a-1}({S}^{{T}})\frac{f_{n}}{\sqrt{1+|Df|^2}}d\sigma +\frac{1}{2a}\int_{\partial\Omega'}\sigma_{2a-2}(\kappa_{\partial\Omega'})(\sin\theta)^{2a-1}\cos\theta dl,
\end{align*}
where $\theta$ is the angle formed by the tangent plane of $\partial\Omega$
at $\partial\Omega'$ and $x_{n}=0$, $\kappa_{\partial\Omega}$ and
$\kappa_{\partial\Omega'}$ are the principal curvatures of $\partial\Omega$
and $\partial\Omega'$ in $\mathbb{R}_{+}^{n}$ and $\partial\mathbb{R}_{+}^{n}$
respectively, and ${S}^{{T}}$ is the tangential part of the
second fundamental form $S$ of hypersurface in $\mathbb{R}^{n+1}$ on $\partial\mathbb{R}_{+}^{n}.$
\end{thm}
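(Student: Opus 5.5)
We prove the formula in Theorem \ref{thm:Main thm 2} by writing the interior curvature $L_a$ of the graph metric $g=g_{\mathbb{E}}+df\otimes df$ as a Euclidean divergence of an explicit vector field. We then integrate it over the truncated region $(\mathbb{R}_{+}^{n}\backslash\Omega)\cap B_R^{+}$ and identify each resulting boundary piece with a term of the formula.

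\textbf{Step 1: divergence structure.} As in Ge--Wang--Wu (and Lam for $a=1$), for the graph metric
\[
R_{ijkl}=\frac{f_{ik}f_{jl}-f_{il}f_{jk}}{1+|Df|^{2}}, \qquad g^{ij}=\delta_{ij}-\frac{f_if_j}{1+|Df|^2}.
\]
Substituting into \eqref{eq:definition of La} and using the antisymmetry of the generalized Kronecker delta, one gets
\[
L_a=c_a\,\partial_i\Big(\delta^{i i_1\cdots i_{2a-1}}_{j j_1\cdots j_{2a-1}}\frac{f_{i_1j_1}\cdots f_{i_{2a-1}j_{2a-1}} f_{i}f_{j}}{1+|Df|^{2}}\Big),
\]
where $c_a$ is a combinatorial constant, and the Hessian-minor terms are divergence free. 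We also check that the flux of this field on $S_R^{+}$ equals $g_{jk,l}P_a^{ijkl}\nu_i$ up to errors that are $o(1)$ after integration. Here we use $g_{jk,l}=f_{jl}f_k+f_jf_{kl}$ and the decay $\tau>\frac{n-2a}{a+1}$.

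\textbf{Step 2: integration by parts.} Integrate $L_a$ over the truncated region. The boundary consists of four pieces:
\begin{itemize}
\item the hemisphere $S_R^+$;
\item the inner boundary $\partial\Omega\cap\mathbb{R}^n_+$;
\item the flat piece $\partial\mathbb{R}^n_+\backslash\Omega'$, on which the flux has normal $-e_n$;
\item the corner circles $S_R^{n-2}$ and $\partial\Omega'$.
\end{itemize}
On $\partial\Omega$, $f$ is constant and $|Df|\to\infty$, so $\nu=Df/|Df|$. The tangential Hessian restricted to the level set is $|Df|\,h_{\alpha\beta}$, so the flux becomes $\sigma_{2a-1}(\kappa_{\partial\Omega})$ times a normalizing constant, exactly as in Lam/GWW.

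On $\partial\mathbb{R}^n_+$ the flux is $\delta^{n\cdots}_{j\cdots}f_{i_1j_1}\cdots f_nf_j/(1+|Df|^2)$. Compare this with the boundary term $2a\,\mathscr{B}^{a-1}$, computed from the second fundamental form $L_\beta^\alpha$ of $\partial M$ in $(M,g)$. Their difference is a tangential divergence on $\partial\mathbb{R}^n_+$ minus $\sigma_{2a-1}(S^T)\frac{f_n}{\sqrt{1+|Df|^2}}$. Integrating this tangential divergence over $\Sigma_R\backslash\Omega'$ by the $(n-1)$-dimensional divergence theorem produces line integrals on $S_R^{n-2}$ and on $\partial\Omega'$.

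\textbf{Step 3: the line terms.} On $S_R^{n-2}$ we must show the line integrals match $g_{n\beta}P_a^{n\beta n\alpha}\mu_\alpha+(a-1)Q_{a,a-2}^{\eta\beta\alpha\gamma}g_{\beta\alpha,\gamma}\mu_\eta$ modulo $o(1)$. This is how the two extra terms in Definition \ref{mass defi} arise, with $Q$ collecting the mixed $L\cdot R^{a-2}$ contributions.

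On $\partial\Omega'$ we use $f=$ const along $\partial\Omega$. The Hessian there splits into the curvature of $\partial\Omega'$ inside $\partial\mathbb{R}^n_+$, with weight $\sin\theta$, and a normal component. Together with $f_n/|Df|=\cos\theta$ in the limit $|Df|\to\infty$, this yields $\frac{1}{2a}\sigma_{2a-2}(\kappa_{\partial\Omega'})(\sin\theta)^{2a-1}\cos\theta$.

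Letting $R\to\infty$ and using Theorem \ref{thm:Main thm1-} for the existence of the limit gives the identity, with normalization $\sigma_{2a}(S)=L_a/(2a)!$ from Reilly.

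\textbf{Main obstacle.} The hardest step is the boundary bookkeeping in Steps 2–3. We must show that the flat-boundary flux minus $\mathscr{B}^{a-1}$ is exactly $-\sigma_{2a-1}(S^T)f_n/\sqrt{1+|Df|^2}$ plus a tangential divergence, and that this divergence's line integral on $S_R^{n-2}$ reproduces precisely the $P$ and $(a-1)Q$ terms. We would do this first for $a=1$, checking against $\mathfrak{m}_{ABL}$. We would then do the general case by expanding the Kronecker delta with the index $n$ placed in each slot and grouping the terms.
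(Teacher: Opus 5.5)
Your overall architecture matches the paper's: divergence theorem for $\sigma_{2a}(S)=L_a/(2a)!$ on $B_R^+\setminus\Omega$, a tangential integration by parts on the flat boundary producing line terms on $S_R^{n-2}$ and $\partial\Omega'$, level-set computations on $\partial\Omega$ and $\partial\Omega'$, and asymptotic matching with the mass. But the step you single out as the main obstacle is set up wrongly, and as stated it is false.

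The formula to be proved contains no $\mathscr{B}^{a-1}$, and none enters the argument. Write $w=\sqrt{1+|Df|^2}$ and $\frac{1}{(2a)!}L_a=\frac{1}{2a}\partial_i\bigl(T_{2a-1}(S)^i_j\frac{f_j}{w}\bigr)$. Then the flat-boundary flux is by itself $-\sigma_{2a-1}(S^T)\frac{f_n}{w}$ plus a tangential divergence. Indeed $T_{2a-1}(S)^n_n=\sigma_{2a-1}(S^T)$, and
\[
T_{2a-1}(S)^n_\alpha\frac{f_\alpha}{w}=-\partial_\beta\Bigl(T_{2a-2}(S^T)^\beta_\alpha\frac{f_nf_\alpha}{w^2}\Bigr)+(2a-1)\,\sigma_{2a-1}(S^T)\frac{f_n}{w}.
\]
To prove this, note that one lower index of the Kronecker symbol must be $n$, so one factor is $S^n_{i_1}=\partial_{i_1}(f_n/w)$ with $i_1$ tangential. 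Move $\partial_{i_1}$ off this factor. The resulting derivatives $\partial_{i_1}S^{j}_{i_2}=\partial_{i_1}\partial_{i_2}(f_j/w)$ are symmetric in $i_1,i_2$ and die under antisymmetrization. Only $\partial_{i_1}(f_\alpha/w)=S^\alpha_{i_1}$ survives, and it produces the $(2a-1)\sigma_{2a-1}(S^T)f_n/w$ term.

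Your claim that the flat flux minus $2a\mathscr{B}^{a-1}$ equals a divergence minus $\sigma_{2a-1}(S^T)f_n/w$ would therefore force $\mathscr{B}^{a-1}$ to be an exact tangential divergence. That already fails for $a=1$, where $\mathscr{B}^0$ is the mean curvature of $\partial M$. It would also leave an extra $2a\int\mathscr{B}^{a-1}$ in the final identity. If you meant to import the Section 2 relation between $g_{jk,l}P_a^{njkl}$ and $\mathscr{B}^{a-1}$, that relation holds only modulo $O(r^{-(a+1)\tau-2a+1})$ near infinity. It cannot give an exact formula that includes the inner boundary $\partial\Omega$. Likewise, ``placing $n$ in each slot and grouping'' is that asymptotic device, not the mechanism above.

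Your Step 1 formula is also off in a way that matters. Besides the stray factor $f_i$, the correct weight is $(1+|Df|^2)^{-a}$, not $(1+|Df|^2)^{-1}$. This is because $T_{2a-1}(S)^i_jf_j/w=T_{2a-1}(D^2f)^i_jf_j/w^{2a}$: the rank-one part of $S$ drops out by antisymmetry against $f_j$.

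The weight is irrelevant at infinity but decisive on $\partial\Omega$. There the flux is $\frac{1}{2a}\sigma_{2a-1}(\kappa_{\partial\Omega})(|Df|/w)^{2a}$, and the hypothesis $|Df|\to\infty$ is exactly what turns the weight into $1$. With your weight the flux would be $\sigma_{2a-1}(\kappa_{\partial\Omega})|Df|^{2a}/w^2$, which is unbounded for $a\ge 2$. On the other hand, $\nu=Df/|Df|$, $f_n=|Df|\cos\theta$ and $|\overline{D}f|=|Df|\sin\theta$ on $\partial\Omega'$ come from $f$ being constant on $\partial\Omega$, not from the limit. On $\partial\Omega'$ the weight is again $(|Df|/w)^{2a}\to 1$.

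Finally, you only assert the matching on $S_R^{n-2}$. It requires $g_{n\beta}=f_nf_\beta$, the Gauss equation, and $L_{\alpha\beta}=\frac{f_{\alpha\beta}f_n}{w\sqrt{1+|\overline{D}f|^2}}$. These show that, to leading order, the $P$-term and the $(a-1)Q$-term contribute $\frac{(2a-2)!}{2}$ and $(a-1)(2a-2)!$ times $\int_{S_R^{n-2}}T_{2a-2}(S^T)^\eta_\alpha f_\alpha f_n\mu_\eta\,dl$. These sum to $\frac{(2a-1)!}{2}$, the same constant that relates $g_{jk,l}P_a^{ijkl}\nu_i$ to $f_kT_{2a-1}(S)^i_k\nu_i$ on $S_R^+$. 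That equality of constants is what makes the line flux of the tangential divergence match the two extra terms in the mass.
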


\iffalse
\textcolor{red}{
If we assume addition that the eigenvalues of the shaper operator of graphs $A$ satisfy $\lambda(A)\in \bar \Gamma_{2a}^+$ in $\mathbb{R}^n_+$, then we have $\sigma_{2a-1}(A^{\mathbb{T}})\ge 0$ and $L_a=(2a)!\sigma_{2a}(A)\ge 0.$ Now, if $\mathfrak{m}_{a,B}(g)=0$ and assume also that $f_n\ge 0$ on $\partial \mathbb{R}^n_+$, then $\sigma_{2a}=0$ in $\mathbb{R}^n_+$ and $\sigma_{2a-1}(A^{\mathbb{T}}){f_{n}}=0$ on $\partial \mathbb{R}^n_+$. It seems that the following corollary is not obvious?
\begin{cor}
Assume as Theorem \ref{thm:Main thm 2}.
and the eigenvalues of the shaper operator of graphs $A$ satisfy $\lambda(A)\in \bar\Gamma_{2a}^+$ in $\mathbb{R}^n_+$ and $f_n\ge 0$ on $\partial \mathbb{R}^n_+$. Then $\mathfrak{m}_{a,B}(g)\ge 0$ and equality holds iff the graph is flat.???? 
\end{cor}
}

\fi
When $S\in \Gamma_{2a}^+$,  it follows that $\kappa_{\partial \Omega}\in \Gamma_{2a-1}^+$ and $\kappa_{\partial \Omega'}\in \Gamma_{2a-2}^+$ if $\Omega$ is not empty.
\iffalse
\textcolor{red}{There exist some problem here. In the proof, I choose the outer normal vector on level set $\partial $ as $\nu=\frac{Df}{|Df|}$. But it is possible to be $\nu=-\frac{Df}{|Df|}$. If no $\sigma_{2a-2}(\kappa_{\partial \Omega'})$, then of course, both direction yields the same identity. But with $\sigma_{2a-2}(\kappa_{\partial \Omega'})$, this terms will differ a symbol $-$. Maybe the appearance of this phenomenon always tells us that we should pose $f_n=0$, or add other terms to cancel it, or add some condition such that $\nu=\frac{Df}{|Df|}$ or $\nu=\frac{Df}{|Df|}$. }
\fi
When $f$ is constant on $\partial\Omega$, we have $f_{n}=|Df|\cos\theta$ on $\partial\Omega'$, and if $f_n=0$ in Theorem \ref{thm:Main thm 2}, $\theta=\frac\pi 2$. 

With some additional assumptions, the following corollary holds. 
\begin{cor}\label{cor:sharp lower bound for graph}
Assume as Theorem \ref{thm:Main thm 2}. Suppose $\lambda(S)\in \bar \Gamma_{2a}^+$
and $f_{n}\ge 0$ on $\partial\mathbb{R}_{+}^{n}\backslash\Omega.$ We
have 

\begin{align*}
\frac{2\mathfrak{m}_{a,B}(g)}{(2a)!c(n,a)}\ge & \frac{1}{2a}\int_{\partial\Omega\cap\mathbb{R}_{+}^{n}}\sigma_{2a-1}(\kappa_{\partial\Omega})d\sigma+\frac{1}{2a}\int_{\partial\Omega'}\sigma_{2a-2}(\kappa_{\partial\Omega'})(\sin\theta)^{2a-1}\cos\theta dl,
\end{align*}
where $\kappa_{\partial\Omega}$ denotes the principal curvature of $\partial\Omega$.
%$\kappa_{\partial\Omega'}$ are the principle curvature of $\partial\Omega$
%where $\theta$ is the angle formed by the tangent plane of $\partial\Omega$
%at $\partial\Omega'$ and $x_{n}=0$, $\kappa_{\partial\Omega}$ and
%$\kappa_{\partial\Omega'}$ are the principle curvature of $\partial\Omega$
%and $\partial\Omega',$ $A^{\mathbb{T}}$ is the tangential part of
%$A$ on $\partial\mathbb{R}_{+}^{n}.$ 
The equality is achieved by half-Schwarz manifolds.
%Moreover, if we assume that $\lambda(S)\in \bar \Gamma_{2a}^+$ and $\Omega$ is empty, then only the flat graph can achieve the equality, that is $\mathfrak{m}_{a,B}(g_{\mathbb{E}})=0$.
\end{cor}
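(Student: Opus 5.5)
The plan is to read the corollary off the identity of Theorem \ref{thm:Main thm 2} by showing that the two terms absent from the right-hand side of the claimed inequality are nonnegative. These are
\[
\int_{\mathbb{R}_{+}^{n}\backslash\Omega}\frac{1}{(2a)!}L_{a}\,dx
\qquad\text{and}\qquad
\int_{\partial\mathbb{R}_{+}^{n}\backslash\Omega'}\sigma_{2a-1}(S^{T})\frac{f_{n}}{\sqrt{1+|Df|^{2}}}\,d\sigma .
\]
The half-Schwarzschild case is treated separately as the equality case.

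\textbf{Interior term.} By Reilly's formula quoted above, $L_a=(2a)!\,\sigma_{2a}(S)$ for the graph hypersurface. The hypothesis $\lambda(S)\in\bar\Gamma_{2a}^{+}$ gives $\sigma_{2a}(S)\ge 0$ pointwise, so the first integral is nonnegative.

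\textbf{Boundary term.} Since $f_n\ge0$ on $\partial\mathbb{R}^n_+\backslash\Omega$ and the square root is positive, it suffices to show $\sigma_{2a-1}(S^{T})\ge0$, where $S^{T}$ is the $(n-1)\times(n-1)$ restriction of $S$ to the tangent directions of $\partial\mathbb{R}^n_+$. I would use the standard cone-restriction fact: if $\lambda(A)\in\bar\Gamma_k^+$ for a symmetric $n\times n$ matrix $A$, then any principal $(n-1)\times(n-1)$ submatrix has eigenvalues in $\bar\Gamma_{k-1}^+$. One proof goes through the Cauchy interlacing theorem combined with $\sigma_{k-1}(A|e)=\partial\sigma_k/\partial\lambda_e$ in a diagonalizing frame. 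Alternatively, $\sigma_{j}(A^{T})$ is the derivative of $\sigma_{j+1}(A)$ with respect to the $(n,n)$ entry, and $\partial\sigma_{k}/\partial A_{nn}\ge 0$ on $\bar\Gamma_{k}^+$ by Gårding's theory. Here the relevant matrix is $S$ expressed in an orthonormal frame of the graph adapted so that one vector is the unit normal direction to $\partial\mathbb{R}^n_+$ within the hypersurface. With $k=2a$, this gives $\sigma_{2a-1}(S^{T})\ge0$.

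\textbf{Main obstacle.} The delicate point is that $S^{T}$ must be a genuine principal submatrix of $S$ with respect to an orthonormal frame of the induced metric $g$, rather than of the coordinate matrix $f_{ij}/\sqrt{1+|Df|^2}$. I would check this against the convention used in the proof of Theorem \ref{thm:Main thm 2}. Then I would invoke the derivative characterization $\sigma_{2a-1}(S^T)=\partial\sigma_{2a}(S)/\partial S_{nn}$, which is nonnegative on the closed cone by concavity of $\sigma_{2a}^{1/2a}$ and an approximation from the open cone.

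\textbf{Conclusion and equality.} Dropping the two nonnegative terms from the identity yields the stated inequality. For the equality statement, I would take $f$ to be the half-Schwarzschild graph, i.e.\ the radial profile solving $\sigma_{2a}(S)=0$ restricted to $x_n\ge 0$. By radial symmetry $f_n=0$ on $\partial\mathbb{R}^n_+$, so the boundary term vanishes, and the interior term vanishes since $L_a=0$. Hence both dropped terms are zero and equality holds. Here $\theta=\pi/2$, so the last integral also vanishes, which is consistent.
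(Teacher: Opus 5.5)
Your argument is the same as the paper's. The paper also discards $\int L_a\ge 0$ via Reilly and drops the boundary integral, justifying the latter only by the sentence that $\lambda(S)\in\bar\Gamma_{2a}^+$ implies $\sigma_{2a-1}(S^T)\ge 0$. You correctly located the weak point, but the check you postpone comes out negative. So the boundary step is a genuine gap, and the paper's argument has the same gap.

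In the identity of Theorem~\ref{thm:Main thm 2}, $S^T$ must be the coordinate block $(S^\beta{}_\alpha)_{\alpha,\beta\le n-1}$ of the mixed tensor $S^i{}_j=\partial_j(f_i/w)$. It is not a block of $f_{ij}/w$, and it is not a compression in an orthonormal frame. The reasons are:
\begin{itemize}
\item the boundary flux is $T_{2a-1}(S)^n{}_n\,f_n/w+T_{2a-1}(S)^n{}_\alpha\,f_\alpha/w$, and $T_{2a-1}(S)^n{}_n$ is $\sigma_{2a-1}$ of exactly that block;
\item the tangential integration by parts uses that this block is the Jacobian of $\overline{D}f/w$ along $\partial\mathbb{R}^n_+$.
\end{itemize}
Since $g(\partial_\alpha,\partial_n)=f_\alpha f_n\neq 0$ in general, this block is the compression of the $g$-self-adjoint operator $S$ onto $T\partial M$ \emph{along} $\partial_n$. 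That is an oblique projection, so Cauchy interlacing does not apply. Your derivative argument fails for the same reason: perturbing the mixed entry $S^n{}_n$ destroys self-adjointness. The derivative that G{\aa}rding's theory makes nonnegative is
\[
\partial\sigma_{2a}/\partial S_{nn}=T_{2a-1}(S)^{nn}=|\nabla x^n|_g^2\,\sigma_{2a-1}(\text{orthogonal compression}),
\]
and this is not the quantity $T_{2a-1}(S)^n{}_n$ that appears in the identity.

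Here is a concrete failure. Take $n=3$, $a=1$, and at a boundary point set $Df=(1,0,1)$, $f_{11}=\varepsilon$, $f_{13}=\sqrt{\varepsilon}$, $f_{33}=1$, with all other second derivatives $0$. Then $D^2f\ge 0$, so every principal curvature is nonnegative, and $f_3=1>0$. Yet
\[
\sigma_1(S^T)=S^1{}_1+S^2{}_2=\frac{2\varepsilon-\sqrt{\varepsilon}}{3\sqrt{3}}<0\qquad(0<\varepsilon<1/4),
\]
whereas the $g$-orthogonal compression has trace $\varepsilon/(2\sqrt{3})\ge 0$. So under the stated hypotheses the integrand $\sigma_{2a-1}(S^T)f_n/w$ is not pointwise nonnegative, and dropping it needs an extra assumption or idea. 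Two options:
\begin{itemize}
\item assume $f_n=0$ on $\partial\mathbb{R}^n_+$, the alternative the paper mentions right after the corollary, which makes the term vanish;
\item rewrite the boundary flux in a $g$-orthonormal frame, which costs you the tangential divergence structure that produced $\sigma_{2a-1}(S^T)$.
\end{itemize}
Your interior step is fine. Your equality check for the half-Schwarzschild graph is also fine, since there $f_n=0$, $\theta=\pi/2$ and $L_a=0$.
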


This corollary is due to the well-known fact that if $\lambda(S)\in \bar \Gamma_{2a}^+$, then $\sigma_{2a-1}(S^T)\ge0$.
One can also replace the assumptions in Corollary \ref{cor:sharp lower bound for graph} by $L_a\ge 0$ and $f_n=0$.

\iffalse
We remark here that if we assume additionally $f_n=0$ on $\partial \mathbb R^n_+$ in Corollary \ref{cor:sharp lower bound for graph}, then the equality holds if and only if $f$ is constant. Equality yields that the graph $(x, f(x))$ satisfies $k$-th mean curvature $\sigma_k(A)=0$ in $ \mathbb R^n_+$ with free boundary condition $f_n=0$ on $\partial \mathbb R^n_+$. 
When $k=1$, the equation is the minimal surface equation, and the flatness can be just obtained by reflecting the graph and the regularity in whole space can be naturally improved. For $k\ge 2$, the $k$-th mean curvature equation is fully nonlinear and it seems unknown whether reflecting the graph can reduce the half-Liouville problem to Liouville theorem in entire space due to the possible regularity problem from fully nonlinear equations.   To show that $f$ is constant, one may carry out a similar process in \cite{WangX} with minor modification of the auxiliary functions.  We left it to interested readers.
\fi

We now describe the analogous phenomenon for conformally flat manifolds. 
\begin{defn}
A conformally flat (CF) manifold $(M, g)=(\mathbb{R}^n_+\backslash \Omega, e^{-2u}g_{\mathbb{E}})$ ($\Omega$ possibly empty) is called an
asymptotically flat  conformally flat (AF CF) manifold of decay order $\tau$ if  $$|u|+|x||D u|+|x|^2\left|D^2 u\right|=O\left(|x|^{-\tau}\right)$$
at infinity for some $\tau>(n-2a)/(a+1)$.
\end{defn}

The Schouten tensor $$A_g:=\frac{1}{n-2}\left(\operatorname{Ric}-\frac{R_g}{2(n-1)} g\right)$$
and we say $g\in \Gamma_a^+$ if  $\lambda(A_g)\in \Gamma_a^+$.

Particularly, when $g=e^{-2u}g_{\mathbb{E}}$, $$A_g=D^2 u+D u \otimes D u- \frac{|D u|^2}{2}g_{\mathbb{E}}.$$

It is well known that, on locally conformally flat manifolds,
\[
L_{a}=2^{a}a!\frac{(n-a)!}{(n-2a)!}\sigma_{a}(A_{g}),
\]
where \(\sigma_a(A_g)\) denotes the \(\sigma_a\)-curvature of the Schouten tensor,
a curvature quantity introduced in conformal geometry by Viaclovsky \cite{ViacDuke}.

This inspires us to prove the following theorem:
\begin{thm}\label{thm1 on CF}
Let $(M, g)=(\mathbb{R}^n_+, e^{-2u}g_{\mathbb{E}})$ be AF CF manifolds of order $\tau$.  Assume that $g\in \overline\Gamma_a^+$ and $L_{a}\in L^1(M)$ and  $\mathscr{B}^{a-1}\in L^1(\partial M)$. Suppose the mean curvature $h_g$  of $\partial M$ is non-negative on $\partial \mathbb{R}^n_+$. 
Then $\mathfrak{m}_{a,B}(g)\ge 0$. Equality holds if and only if
$u=0$, that is, $M$ is Euclidean half-space.
\end{thm}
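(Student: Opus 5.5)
Our approach is to write the mass $\mathfrak{m}_{a,B}(g)$ as an integral of $\sigma_a$-type quantities in divergence form plus a boundary term, and then read off the sign. The model is the GBC positive mass theorem for conformally flat manifolds in \cite{GWW1}, with an extra boundary contribution coming from $\mathscr{B}^{a-1}$.

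\textbf{Step 1 (mass as a total curvature integral).} From the computation behind Theorem \ref{thm:Main thm1-}, i.e.\ the divergence structure $L_a=2\partial_i(g_{jk,l}P_a^{ijkl})+\dots$ together with the boundary analogue for $\mathscr{B}^{a-1}$, we expect
\[
\mathfrak{m}_{a,B}(g)=c\Big(\int_{M}L_a\,dv_g+2a\int_{\partial M}\mathscr{B}^{a-1}d\sigma_g\Big), \qquad c>0,
\]
when $\Omega=\emptyset$. To justify this rigorously in the conformally flat case, we write $L_a$ through the Newton tensors $T_{a-1}(A_g)$ of the Schouten tensor, using $L_a=2^aa!\frac{(n-a)!}{(n-2a)!}\sigma_a(A_g)$. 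For $g=e^{-2u}g_{\mathbb E}$, $\sigma_a(A_g)e^{-nu}$ is a Euclidean divergence $\partial_i(\cdots)$ of a polynomial in $Du,D^2u$, as in Viaclovsky's and GWW's computations. Integrating over $B_R^+$ then gives a flux over $S_R^+$, which matches the first term of \eqref{eq:definition of boundary mass} up to $O(R^{n-1-(a+1)\tau-2a})\to0$, plus a flux over $\{x_n=0\}\cap B_R$. Similarly, on the flat boundary $\partial\mathbb R^n_+$ the boundary curvature is $L^\alpha_\beta=e^{u}u_n\delta^\alpha_\beta$ (up to sign convention), and the boundary metric is conformally flat. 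Hence $\mathscr{B}^{a-1}$ is a multiple of $u_n\,\sigma_{a-1}$ of the boundary Schouten-type tensor with $u_n$ corrections. We would show that the flux on $\{x_n=0\}$ cancels the corresponding part of $\mathscr{B}^{a-1}$ up to an $(n-1)$-dimensional divergence, whose flux through $S_R^{n-2}$ produces the two $S_R^{n-2}$ terms.

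\textbf{Step 2 (positivity).} Since $g\in\overline\Gamma_a^+$, we have $L_a\ge0$. The boundary term needs more care. In dimension $n-1$, $\mathscr{B}^{a-1}$ equals $(2a-1)!$ times $\sigma_{2a-1}$ of a mixed expression. We plan to use Reilly-type identities in the conformally flat case, $\mathscr{B}^{a-1}=c\,\sum_k c_k h_g^{2k+1}\sigma_{a-1-k}(A^T)$ with $c_k\ge0$. This is the known expansion of the Chern--Gauss--Bonnet boundary term for umbilic boundary, and $\partial\mathbb R^n_+$ is umbilic under a conformal change. The tangential Schouten tensor $A^T$ restricted to $\partial M$ lies in $\overline\Gamma^+_{a-1}$ because $A_g\in\overline\Gamma_a^+$. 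Together with $h_g\ge0$, this gives $\mathscr{B}^{a-1}\ge0$ and hence $\mathfrak{m}_{a,B}\ge0$.

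\textbf{Step 3 (rigidity).} If $\mathfrak m_{a,B}(g)=0$, then $\sigma_a(A_g)\equiv0$ in $\mathbb R^n_+$ and $\mathscr{B}^{a-1}\equiv0$, which includes the term $h_g^{2a-1}$, so $u_n=0$ on $\partial\mathbb R^n_+$. Reflecting $u$ evenly across $x_n=0$ gives a $C^{1,1}$ solution of $\sigma_a(A)=0$ on $\mathbb R^n$ that is in $\overline\Gamma^+_a$ and decays at infinity. A Liouville/maximum principle argument then applies: $\sigma_1(A_g)\ge0$ gives $\Delta u\ge\frac{n-2}{2}|Du|^2\ge0$, so $u$ is subharmonic and tends to $0$ at infinity, hence $u\le0$. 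Applying an integral identity of Obata/Pohozaev type for $\sigma_a$ (as in \cite{GWW1}) would force $Du\equiv0$, so $u=0$.

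The main obstacle is Step 1's boundary bookkeeping: showing that the $\{x_n=0\}$ flux of the interior divergence, combined with $2a\,\mathscr B^{a-1}$, leaves exactly the $P$ and $(a-1)Q$ terms in \eqref{eq:definition of boundary mass}. The nonnegativity of $\mathscr{B}^{a-1}$ under only $h_g\ge0$ is the second delicate point. We would first try the case $a=2$ by explicit computation to identify the signs.
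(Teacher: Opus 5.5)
There is a genuine gap, and it is in Step 1, the step you flagged as the main obstacle. The identity $\mathfrak{m}_{a,B}(g)=c\bigl(\int_M L_a\,dv_g+2a\int_{\partial M}\mathscr{B}^{a-1}d\sigma_g\bigr)$ is false.

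\eqref{eq:div structure} and Theorem~\ref{thm:key boundary equality} hold only up to remainders $O(r^{-(a+1)\tau-2a})$ and $O(r^{-(a+1)\tau-2a+1})$. These are integrable but not zero, so they shift the total by a finite, generally nonzero amount. Your proposed rigorous version also fails: $\sigma_a(g^{-1}A_g)e^{-nu}=e^{(2a-n)u}\sigma_a(A)$ is not a Euclidean divergence when $2a<n$. Already for $a=1$, with $A=D^2u+Du\otimes Du-\frac{|Du|^2}{2}I$,
\[
e^{(2-n)u}\sigma_1(A)=\partial_i\bigl(e^{(2-n)u}u_i\bigr)+\tfrac{n-2}{2}\,e^{(2-n)u}|Du|^2 .
\]
So a nontrivial $u$ compactly supported inside $\mathbb{R}^n_+$ has $\mathfrak{m}_{1,B}=0$ and $\mathscr{B}^0=0$, yet $\int_M R_g\,dv_g>0$.

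What is an exact divergence is the Euclidean Hessian quantity $\sigma_a(D^2u)=\frac1a\partial_i\bigl(T_{a-1}(D^2u)^{ij}u_j\bigr)$.
\begin{itemize}
\item Proposition~\ref{asym behavior} identifies the mass fluxes with the fluxes of $T_{a-1}(D^2u)^{il}u_l$ and of $u_nT_{a-2}(\overline D^2u)$. The $g_{n\beta}P_a^{n\beta n\alpha}$ term vanishes because $g_{n\beta}=0$.
\item On $\{x_n=0\}$ the paper uses $T_{a-1}(D^2u)^{\alpha n}u_\alpha=-\partial_\beta\bigl(T_{a-2}(\overline D^2u)^{\alpha\beta}u_nu_\alpha\bigr)+(a-1)\sigma_{a-1}(\overline D^2u)u_n$.
\end{itemize}
For $\Omega=\emptyset$ this gives exactly
\[
\mathfrak{m}_{a,B}(g)=\frac{(a-1)!\,(n-a)!}{2(n-1)!\,\omega_{n-1}}\Bigl(a\int_{\mathbb{R}^n_+}\sigma_a(D^2u)\,dx+a\int_{\partial\mathbb{R}^n_+}\sigma_{a-1}(\overline D^2u)\,u_n\,dx\Bigr).
\]
The boundary density here is $\sigma_{a-1}(\overline D^2u)u_n$, not $\mathscr{B}^{a-1}$. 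So the sign of the mass cannot be read off from $L_a\ge0$ and $\mathscr{B}^{a-1}\ge0$.

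The missing idea is the splitting $D^2u=A+B$ with $B=\frac{|Du|^2}{2}I-Du\otimes Du$. This $B$ lies in $\Gamma_a^+$ because $2a<n$. Superadditivity of $\sigma_a$ on the cone then gives $\sigma_a(D^2u)\ge\sigma_a(A)+\frac{(n-1)!(n-2a)}{2^aa!(n-a)!}|Du|^{2a}$. This is why the natural curvature integral is the weighted $\int\sigma_a(g^{-1}A_g)e^{(n-2a)u}dv_g$, plus a gradient term, rather than $\int L_a\,dv_g$.

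On the boundary, $\overline D^2u=A^T+\overline B+\frac{u_n^2}{2}I$ with $A^T\in\overline\Gamma^+_{a-1}$. This gives $\sigma_{a-1}(\overline D^2u)\ge\sum_k C_2(n,a,k)\sigma_k(A^T)u_n^{2(a-1-k)}+c\,|\overline Du|^{2(a-1)}\ge0$. Also $u_n\ge0$ because $h_g=e^uu_n\ge0$. Together these give $\mathfrak{m}_{a,B}(g)\ge0$.

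Rigidity is then immediate: equality forces $\int|Du|^{2a}=0$, hence $u\equiv0$ by decay. Your Step 3 (reflection, a Liouville theorem for the degenerate equation $\sigma_a(A)=0$, an unspecified Obata/Pohozaev identity) is therefore unnecessary.

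Step 3 also fails as written. In the conformally flat case $\mathscr{B}^{a-1}=2^{a-1}\frac{(n-1-a)!\,a!}{(n-2a)!}\sigma_{a,a-1}(A^T,L)$. This is a positive multiple of $h_g\,\sigma_{a-1}(g^{-1}A^T_g)$, with only one power of $h_g$. The sum $\sum_k c_kh_g^{2k+1}\sigma_{a-1-k}(A^T)$ you quote is Chen's full boundary term $\mathcal{B}^a$; a sum of that type is what appears in the paper's lower bound, through $\sigma_{a-1}(A^T+\frac{u_n^2}{2}I)$. So your conclusion $\mathscr{B}^{a-1}\ge0$ is correct, but $\mathscr{B}^{a-1}=0$ does not force $u_n=0$.
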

Also we provide a Penrose-type inequality for the mass $\mathfrak{m}_{a,B}(g)$ in conformally flat manifolds.
\begin{thm}\label{thm on CF}
On AF CF manifolds $(M, g)$ of order $\tau$, assume that $g\in \overline\Gamma_a^+$ and $\int_{M}L_{a}dv_{g}+2a\int_{\partial M}\mathscr{B}^{a-1}d\sigma_{g}$ is finite. Suppose $\partial \Omega$ is a horizon on $(M, g)$ (i.e., $\partial \Omega=\partial M\backslash\partial \mathbb{R}^n_+  \subset M$ is minimal) and $u$ is constant on $\partial \Omega$.  Suppose the mean curvature $h_g$ of $\partial M$ is non-negative on $\partial \mathbb{R}^n_+\backslash {\Omega}$ and $\partial \Omega$ is strictly mean-convex in $\mathbb{R}^n$. Assume that the angle $\theta$ formed by the tangent plane of $\partial\Omega$ at $\partial\Omega'$ and $x_n=0$ satisfies $0< \theta< \pi$. 
Then 
\begin{align}
\mathfrak{m}_{a,B}(g)\ge & \frac{(a-1)!(n-a)!}{2(n-1)!\omega_{n-1}}\bigg(\int_{\partial\Omega} \sigma_{a-1}(\kappa_{\partial \Omega})\left(\frac{\sigma_1(\kappa_{\partial \Omega})}{n-1}\right)^a d\sigma\no\\
&+\int_{\partial\Omega'} \sigma_{a-2}(\kappa_{\partial \Omega'})\left(\frac{\sigma_1(\kappa_{\partial \Omega})}{n-1}\right)^a(\sin \theta)^{a-1}\cos \theta dl\bigg),
\end{align}
where $\kappa_{\partial\Omega}$ and
$\kappa_{\partial\Omega'}$ are the principal curvatures of $\partial\Omega$
and $\partial\Omega'.$  Equality holds if and only if $g$ is flat.
\end{thm}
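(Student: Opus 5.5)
The plan is to follow the structure of the proof of Theorem \ref{thm1 on CF}, combining it with a sharp Alexandrov--Fenchel/Minkowski type inequality on the horizon. First, for $g=e^{-2u}g_{\mathbb{E}}$ on $\mathbb{R}^n_+\backslash\Omega$, we express the flux in \eqref{eq:definition of boundary mass} in terms of $u$. Write $L_a=2^a a!\frac{(n-a)!}{(n-2a)!}\sigma_a(A_g)$. Expanding $\sigma_a(A_g)e^{-2au}dv_g$ as a divergence in Euclidean terms gives an identity of the form
\[
\sigma_a(A_g)e^{-(n-2a)u}=\partial_i\big(\text{Newton tensor } T_{a-1}^{ij}(A_g)\,u_j\,e^{-(n-2a)u}\big)+(\text{terms with sign})
\]
(the standard divergence structure of $\sigma_a$ of the Schouten tensor in conformally flat coordinates). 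The boundary term $\mathscr{B}^{a-1}$ along $\partial\mathbb{R}^n_+$ should likewise be the boundary divergence of the $Q_{a,a-2}$ flux. Integrating over $B_R^+\backslash\Omega$ and letting $R\to\infty$ then gives
\[
\mathfrak{m}_{a,B}(g)=C\Big(\int_M L_a\,dv_g+2a\int_{\partial\mathbb{R}^n_+\backslash\Omega'}\mathscr{B}^{a-1}\,d\sigma_g\Big)+(\text{inner boundary terms on }\partial\Omega\text{ and }\partial\Omega').
\]
This is the same identity used for Theorem \ref{thm1 on CF}, now with an inner boundary.

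Second, I would use the hypotheses to sign the bulk terms. Since $g\in\overline\Gamma_a^+$, we have $L_a\ge0$. On $\partial\mathbb{R}^n_+$, the boundary mean curvature is $h_g=e^{u}(-u_n)$ up to normalization, and the second fundamental form is umbilic, $L=h_g g/(n-1)$. Hence $\mathscr{B}^{a-1}$ is a positive multiple of $h_g\,\sigma_{a-1}$ of the restricted tensor, which is $\ge0$ by the cone condition. So $\mathfrak{m}_{a,B}(g)$ is bounded below by the inner boundary contributions.

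Third, I would evaluate the inner boundary contributions. Because $u$ is constant on $\partial\Omega$, we have $Du=-|Du|\nu$. The tangential Hessian of $u$ is then $|Du|$ times the second fundamental form of $\partial\Omega$. The minimality of $\partial\Omega$ in $g$ forces $|Du|=\frac{\sigma_1(\kappa_{\partial\Omega})}{n-1}$ (the conformal mean curvature formula $H_g=e^{u}(H-(n-1)\partial_\nu u)$). Substituting into the Newton tensor flux, the restriction of $A_g$ to $T\partial\Omega$ yields $\sigma_{a-1}(\kappa_{\partial\Omega})\,|Du|^{a}$ times $e^{(\cdot)u}$ factors. On the corner $\partial\Omega'$, the $Q$ and $P^{n\beta n\alpha}$ terms produce $\sigma_{a-2}(\kappa_{\partial\Omega'})|Du|^a(\sin\theta)^{a-1}\cos\theta$. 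This is the same trigonometric decomposition as in Theorem \ref{thm:Main thm 2}, where the tangent space of $\partial\Omega'$ splits off from $\partial\Omega$ at angle $\theta$; the condition $0<\theta<\pi$ is what makes this decomposition legitimate.

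The main obstacle is the exponential weights: the boundary terms carry $e^{cu}$ with $u=$ const on $\partial\Omega$, which are not present in the target inequality. I would handle them with a maximum-principle argument in the spirit of Theorem \ref{thm1 on CF}. Using $g\in\overline\Gamma_a^+$ (so $\Delta u+\tfrac{2-n}{2}|Du|^2\ge0$ since $\sigma_1(A_g)\ge0$), $h_g\ge0$ (so $u_n\le0$, a Neumann sign), and $u\to0$ at infinity, I would show the constant value of $u$ on $\partial\Omega$ has the sign for which the weight is $\ge1$. This gives the stated inequality with the constant $\frac{(a-1)!(n-a)!}{2(n-1)!\omega_{n-1}}$.

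For equality, every discarded term must vanish. Then $\sigma_a(A_g)\equiv0$ and the weight equals $1$, so $u\equiv$ const $=0$ by the strong maximum principle and the Hopf lemma at $\partial\mathbb{R}^n_+$, and $g$ is flat.
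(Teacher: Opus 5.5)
Your plan breaks at the first step. The identity $\mathfrak{m}_{a,B}(g)=C\bigl(\int_M L_a\,dv_g+2a\int\mathscr{B}^{a-1}d\sigma_g\bigr)+(\text{inner terms})$ is not exact, and the weighted divergence you intend to derive it from has its remainder with the \emph{wrong} sign. Your flux $e^{-(n-2a)u}T_{a-1}(A)^{ij}u_j$ is exactly $\sqrt{g}\,\bigl(T_{a-1}(g^{-1}A_g)\nabla_g u\bigr)^i$. Since $A_g$ is Codazzi, its divergence is $\langle T_{a-1}(g^{-1}A_g),\nabla_g^2u\rangle_g$. For $g=e^{-2u}g_{\mathbb{E}}$ one computes $\nabla^2_g u=A-B$, where $B=\frac{|Du|^2}{2}I-Du\otimes Du\in\Gamma_a^+$. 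By G{\aa}rding's inequality $\langle T_{a-1}(A),B\rangle\ge 0$, so the divergence equals $a\sigma_a$ \emph{minus} a nonnegative term. For $a=1$ this reads $\partial_i\bigl(u_ie^{-(n-2)u}\bigr)=\bigl(\sigma_1(A)-\frac{n-2}{2}|Du|^2\bigr)e^{-(n-2)u}$. Integrating therefore bounds the flux at infinity from \emph{above} by $\int\sigma_a(g^{-1}A_g)\,dv_g$ plus boundary terms, and $L_a\ge 0$ yields no lower bound. Your inner term is also not what you claim. On $\partial\Omega$ the tangential part of $A$ is $|Du|h-\frac{|Du|^2}{2}I$, so your flux produces $\sigma_{a-1}\bigl(|Du|h-\frac{|Du|^2}{2}I\bigr)$ rather than $|Du|^{a-1}\sigma_{a-1}(\kappa_{\partial\Omega})$. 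The exponential weights, and the maximum-principle step meant to remove them, are artifacts of this choice of flux.

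The missing idea is to work with the Euclidean Hessian $D^2u=A+B$ instead. Its Newton tensor is Euclidean divergence free, so $a\sigma_a(D^2u)=\partial_i\bigl(T_{a-1}(D^2u)^{ij}u_j\bigr)$ holds exactly. Since $D^2u-A=O(|Du|^2)$, its flux at infinity is, up to the factor $2^{a-1}(a-1)!\frac{(n-a)!}{(n-2a)!}$, the $S_R^+$ part of the mass; the $g_{n\beta}$ term vanishes for a conformally flat metric. On $\{x^n=0\}$ one has $T_{a-1}(D^2u)^{\alpha n}=-T_{a-2}(\overline{D}^2u)^{\alpha\beta}u_{\beta n}$. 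One integration by parts turns the bottom flux into $a\int\sigma_{a-1}(\overline{D}^2u)u_n$, plus a term on $S_R^{n-2}$ matching the $Q_{a,a-2}$ flux, plus a corner term on $\partial\Omega'$. The resulting exact formula has \emph{unweighted} inner terms $\int_{\partial\Omega}T_{a-1}(D^2u)^{ij}u_i\nu_j$ and $\int_{\partial\Omega'}T_{a-2}(\overline{D}^2u)^{\alpha\beta}u_nu_\alpha\mu_\beta$. These equal the right-hand side exactly, using $Du=|Du|\nu$ on $\partial\Omega$, $|Du|=\sigma_1(\kappa_{\partial\Omega})/(n-1)$, $u_n=|Du|\cos\theta$ and $|\overline{D}u|=|Du|\sin\theta$. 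The bulk terms are then signed by the superadditivity $\sigma_a(A+B)\ge\sigma_a(A)+\sigma_a(B)$ for $A\in\overline\Gamma_a^+$, $B\in\Gamma_a^+$; this is where $a<n/2$ enters. On $\partial\mathbb{R}^n_+$ one uses the same inequality for $\overline{D}^2u=(A^T+\frac{u_n^2}{2}I)+\overline{B}$, with $\overline{B}=\frac{|\overline{D}u|^2}{2}I-\overline{D}u\otimes\overline{D}u$, together with $u_n\ge 0$ (convention $h_g=e^{u}u_n$). This superadditivity step is the key ingredient absent from your plan, and no maximum principle is needed. For equality, the discarded term $\int_{\mathbb{R}^n_+\setminus\Omega}|Du|^{2a}dx$ must vanish, forcing $Du\equiv 0$. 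Your argument via ``$\sigma_a(A_g)\equiv 0$ and weight $=1$'' does not by itself give this. Combined with $|Du|=\sigma_1(\kappa_{\partial\Omega})/(n-1)>0$ on the horizon, this also shows equality cannot actually occur when $\Omega\neq\emptyset$.
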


The mean curvature of $\partial M$  and the convexity of $\partial \Omega$ make the angle $0< \theta\le \frac \pi 2$. When $\theta=\frac{\pi}{2}$, $u_n=0$ on $\partial \Omega'$ and $\mu=\nu$, where $\nu$ and $\mu$ are unit outer normal vectors of $\partial \Omega$ and $\partial \Omega'$ respectively.

% In \cite{GWW2} Ge, Wang, and Wu explored $m_{GWW}$ in conformally flat manifolds and built a positive mass theorem.
%See also Li-Nguyen \cite{LN}. Various developments have been established in \cite{GWW3,GWWX,Wang-Wu} and others. One of our motivations to study the mass in conformally flat manifolds comes from the Yamabe problem, see \cite{Schoen} and \cite{ABL}.  

Before we end the introduction, 
we mention that when $a=\frac{n}{2}$, Gauss-Bonnet-Chern formula is 
\[
(4\pi)^{\frac{n}{2}}\chi(M,\partial M)=\int_{M}E_{n}dv_{g}+\int_{\partial M}\sum_{i=0}^{\frac{n-2}{2}}Q_{n/2}^i d\sigma_{g},
\]
where 
\[
E_{n}=\left(2^{\frac{n}{2}}\left(\frac{n}{2}\right)!\right)^{-1}\sum_{i,j=1}^{n}\delta^{i_{1}\cdots i_{n}}_{j_{1}\cdots j_{n}}R_{i_{1}i_{2}}^{\quad j_{1}j_{2}}\cdots R_{i_{n-1}i_{n}}^{\quad\,\,j_{n-1}j_{n}},
\]
and 
\[
Q_{n/2}^i=\frac{2^{\frac{n}{2}-2i}}{i!(n-1-2i)!!}\sum_{\alpha,\beta=1}^{n-1}\delta^{\alpha_{1}\cdots\alpha_{n-1}}_{\beta_{1}\cdots\beta_{n-1}}R_{\alpha_{1}\alpha_{2}}^{\quad\beta_{1}\beta_{2}}\cdots R_{\alpha_{2i-1}\alpha_{2i}}^{\quad\beta_{2i-1}\beta_{2i}}L_{\alpha_{2i+1}}^{\beta_{2i+1}}\cdots L_{\alpha_{n-1}}^{\beta_{n-1}}.
\]
If $a=\frac{n}{2}$, then $\mathscr{B}^{a-1}$ is $Q_{\frac{n}{2}}^{\frac{n}{2}-1}$, differing by a constant multiple.

When \( a \neq \frac{n}{2} \), the reason for assuming integrability of only \( Q_{\frac{n}{2}}^{\frac{n}{2}-1} \) is that, under the decay assumption, other terms such as \( \int_{\partial M} Q_{n/2}^i \) for \( i \le \frac{n}{2} - 2 \) are finite and thus do not contribute to the corresponding boundary quantities on large spheres. More precisely, if we assume the decay rate \( \tau > \frac{n - 2a}{a + 1} \), then the curvature tensor and second fundamental form satisfy
\[
|R_{ij}^{\quad kl}| \le Cr^{-\tau - 2}, \quad |L_{\alpha}^{\beta}| \le Cr^{-\tau - 1}
\]
for some constant \( C \). Moreover, for \( i \le \frac{n}{2} - 2 \), we estimate
\[
Q_{n/2}^i \le Cr^{-(\tau + 2)i} \cdot r^{-(\tau + 1)(n - 1 - 2i)} \le Cr^{-\tau(\frac{n}{2} + 1) - n + 1}.
\]
Hence, when \( \tau > 0 \), the integral \( \int_{\partial M} Q_{n/2}^i \) is finite for all \( i \le \frac{n}{2} - 2 \), and the corresponding contributions on \( S_R^{n-2} \) vanish as \( R \to \infty \). Therefore, under this decay condition, the terms \( Q_{n/2}^i \) for \( i \le \frac{n}{2} - 2 \) cannot be detected by the finiteness assumption in Definition~\ref{mass defi}.

%We remark that the critical points of the Gibbons--Hawking--York action under conformal variations are metrics with constant scalar curvature and umbilic boundary; see \cite{Lott} for the general variation formula. %Naturally, a positive mass for manifolds with non-compact boundary is closely connected to the boundary Yamabe problem; see \cite{Escobar,ABL}.
%The mass introduced here is also related to an Einstein-type functional with boundary terms discussed in the Appendix; see there for a comparison with $\mathfrak{m}_{a,B}(g)$.

\

\noindent{\it The rest of the paper is organized as follows.} In Section~2 we introduce notation and prove a differential identity on asymptotically flat half-manifolds near infinity. We then show that the mass is well defined and coordinate independent (Theorem~\ref{thm:Main thm1-}).
In Section~3 we study asymptotically flat graphs and prove Theorem~\ref{thm:Main thm 2}. In Section~4 we treat locally conformally flat manifolds and relate the new mass to geometric integrals, proving Theorems~\ref{thm1 on CF} and \ref{thm on CF}. 

\

\noindent\textbf{Acknowledgments.} This work was carried out while W.~Wei was visiting the University of Freiburg supported by an Alexander von Humboldt Research Fellowship. She thanks the Institute of Mathematics at the University of Freiburg for its hospitality. She was also partially supported by NSFC (No.~12571218, No.~12271244). The authors also thank Mingwei Zhang for helpful conversations.

\section{Boundary GBC mass}

In this section we collect the analytic and algebraic identities needed to define the boundary Gauss--Bonnet--Chern (GBC) mass and to prove its invariance.
We start with the Gauss--Bonnet--Chern (Lovelock) curvature
\[
L_{a}=\frac{1}{2^{a}}\delta_{j_{1}\cdots j_{2a}}^{i_{1}\cdots i_{2a}}R_{i_{1}i_{2}}{}^{j_{1}j_{2}}\cdots R_{i_{2a-1}i_{2a}}{}^{j_{2a-1}j_{2a}}.
\]
Throughout we use the Einstein summation convention. We recall the following basic identities for the tensor $P_a^{ijkl}$ (see Ge--Wang--Wu \cite{GWW1}):
\begin{equation}\label{basic}
P_a^{stlm}R_{stlm}=L_a,
\end{equation}
\begin{equation}\label{eq:antisymm}
P_a^{ijkl}=-P_a^{jikl}=-P_a^{ijlk}=P_a^{klij},
\end{equation}
\begin{equation}\label{eq:div-free}
\nabla_{i}P_a^{ijkl}=\nabla_{j}P_a^{ijkl}=\nabla_{k}P_a^{ijkl}=\nabla_{l}P_a^{ijkl}=0,
\end{equation}
where $\nabla$ denotes the Levi--Civita connection of $g$.

Let $(M^{n},g)$ be an asymptotically flat half-manifold of order $\tau$. For large $r=|x|$, the curvature tensor admits the asymptotic expansion
\begin{equation}\label{behavior of Rm}
R_{kl}{}^{ij}=\frac{1}{2}\left(\frac{\partial^{2}g_{li}}{\partial x^{k}\partial x^{j}}+\frac{\partial^{2}g_{kj}}{\partial x^{l}\partial x^{i}}-\frac{\partial^{2}g_{ki}}{\partial x^{l}\partial x^{j}}-\frac{\partial^{2}g_{lj}}{\partial x^{k}\partial x^{i}}\right)+O\bigl(r^{-2\tau-2}\bigr).
\end{equation}
In particular,
\begin{equation}\label{asy of Rm}
|R_{kl}{}^{ij}|=O\bigl(r^{-\tau-2}\bigr),\qquad |\Gamma_{ij}^k|=O\bigl(r^{-\tau-1}\bigr).
\end{equation}
Moreover,
\begin{equation}\label{asy of Pa}
P_a^{ijkl}=O\bigl(r^{-(\tau+2)(a-1)}\bigr),
\end{equation}
and hence, using \eqref{eq:div-free},
\begin{equation}\label{eq:firstderivative0}
\partial_{i}P_a^{ijkl}=O\bigl(r^{-(\tau+2)(a-1)-(\tau+1)}\bigr).
\end{equation}
In particular,
\begin{equation}\label{eq:first derivative}
\partial_{\alpha}P_a^{n\beta n\alpha}=O\bigl(r^{-(\tau+2)(a-1)-(\tau+1)}\bigr).
\end{equation}

Finally, by \cite[(3.3)]{GWW1}\footnote{In \cite{GWW1} the authors write the formula for $L_{2}$; the general case follows by the same argument.}, we have the almost-divergence structure
\begin{equation}\label{eq:div structure}
L_{a}=2\partial_{i}\bigl(g_{jk,l}P_a^{ijkl}\bigr)+O\bigl(r^{-\tau(a+1)-2a}\bigr)\qquad\text{as }|x|\to\infty.
\end{equation}
This also follows directly from \eqref{basic}, \eqref{behavior of Rm}, and \eqref{eq:firstderivative0}.

To avoid ambiguity, indices $\alpha,\beta,\gamma,\eta$ range over $\{1,\dots,n-1\}$.

We begin with a technical lemma controlling the boundary curvature terms that appear in the mass formula.

\begin{lem}
Suppose that $(M^{n},g)$ is an asymptotically flat half-manifold of order $\tau$. On $\partial M$, for large $r=|x|$, we have
\begin{equation}\label{eq:i1 part}
\mathscr{B}^{a-1}=Q_{a,a-2}^{\eta\beta\alpha\gamma}g_{\beta\alpha,\gamma\eta}+O\bigl(r^{-(a+1)\tau-2a+1}\bigr),
\end{equation}
and
\begin{align}\label{eq:boundary term 2}
 g_{\beta\alpha,\gamma}P_a^{n\beta\alpha\gamma}
=(a-1)\mathscr{B}^{a-1}-(a-1)\partial_{\eta}\bigl(Q_{a,a-2}^{\eta\beta\alpha\gamma}g_{\beta\alpha,\gamma}\bigr)+O\bigl(r^{-(a+1)\tau-2a+1}\bigr).
\end{align}
Here $g_{\beta\alpha,\gamma\eta}=\partial_{\gamma}\partial_{\eta}g_{\beta\alpha}$ denotes the ordinary second partial derivative.
\end{lem}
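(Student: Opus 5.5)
The plan is to expand everything to leading order in the Euclidean coordinates on $\partial M=\{x^n=0\}$ and to track how many factors of order $r^{-\tau-2}$ (curvature) and $r^{-\tau-1}$ (second fundamental form, Christoffel symbols) each term carries. A product of $a-1$ curvature factors with one $L$ factor is $O(r^{-(a-1)(\tau+2)-(\tau+1)})=O(r^{-a\tau-2a+1})$. Any correction that adds one more factor of $\sigma$ or $\Gamma$ therefore gains at least $r^{-\tau}$, giving the stated error $O(r^{-(a+1)\tau-2a+1})$.

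For \eqref{eq:i1 part} I will first rewrite $\mathscr{B}^{a-1}$. The only difference between $\mathscr{B}^{a-1}$ and $Q_{a,a-2}$ is that one $R$ factor is replaced by a pair of $g^{-1}$ factors contracted into the last two slots; $Q$ itself contains the single $L$ factor. I will use the antisymmetry of the generalized Kronecker delta to move the $L_i^j$ slot to the front. Then $\mathscr{B}^{a-1}=\frac{1}{2}Q_{a,a-2}^{\eta\beta\alpha\gamma}R_{\eta\beta\alpha\gamma}$ up to index raising and lowering, with all indices tangential, and the count of $(a-1)$ choices of which $R$ factor to single out is absorbed since all are equal. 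Next I insert \eqref{behavior of Rm}, which says $R_{\eta\beta\alpha\gamma}$ equals the four-term combination of second derivatives plus $O(r^{-2\tau-2})$. By the antisymmetry of $Q$ in $(\eta,\beta)$ and in $(\alpha,\gamma)$, the four terms combine to $2g_{\beta\alpha,\gamma\eta}$ up to sign. This yields $Q^{\eta\beta\alpha\gamma}_{a,a-2}g_{\beta\alpha,\gamma\eta}$. The lowering of indices by $g$ instead of $\delta$ only costs a relative factor $O(r^{-\tau})$, so it stays inside the error. This is the same bookkeeping as in the proof of \eqref{eq:div structure}.

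For \eqref{eq:boundary term 2} I will compute $P_a^{n\beta\alpha\gamma}$ on the boundary using the Gauss equation. In $P_a^{n\beta\alpha\gamma}$ one upper index of the delta is fixed to $n$. Expanding the delta along that slot, the lower index $n$ must also appear in one of the curvature factors, as $R_{i_1 n}{}^{j_1j_2}$ with the remaining indices tangential, or in the factors $g^{j\cdot}$. Those terms are lower order or vanish by the tangential index structure. I expect this to be the main obstacle: I must show that the terms of $P_a^{n\beta\alpha\gamma}g_{\beta\alpha,\gamma}$ whose curvature carries a normal index can be rewritten, via the Codazzi equation $R_{\alpha n\beta\gamma}=\nabla_\beta L_{\alpha\gamma}-\nabla_\gamma L_{\alpha\beta}$ together with $L_{\alpha\beta}=\pm\frac12(g_{\alpha n,\beta}+g_{\beta n,\alpha}-g_{\alpha\beta,n})+O(r^{-2\tau-1})$, as tangential derivatives. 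Failing that, I would organize the calculation differently. I would write $P_a^{n\beta\alpha\gamma}$ at leading order as $(a-1)$ times a tangential delta contraction with one factor $R_{\eta n}{}^{\cdot n}$ or, via Gauss, with $L$ factors. Then I would identify it with $(a-1)Q_{a,a-2}^{\eta\beta\alpha\gamma}$ differentiated by $\partial_\eta$ paired against $g_{\beta\alpha,\gamma}$. Concretely, the target is the leading-order identity $P_a^{n\beta\alpha\gamma}g_{\beta\alpha,\gamma}\approx -(a-1)Q^{\eta\beta\alpha\gamma}_{a,a-2}\,\cdot$ (a second-derivative combination), which should already be $O(r^{-(a+1)\tau-2a+1})$ apart from the structure below.

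Once $P_a^{n\beta\alpha\gamma}g_{\beta\alpha,\gamma}$ has been reduced to $(a-1)Q_{a,a-2}^{\eta\beta\alpha\gamma}g_{\beta\alpha,\gamma\eta}$ plus an error, the conclusion is a product rule. We have $\partial_\eta(Q g_{\beta\alpha,\gamma})=Q g_{\beta\alpha,\gamma\eta}+(\partial_\eta Q)g_{\beta\alpha,\gamma}$, and \eqref{eq:i1 part} identifies the first term with $\mathscr{B}^{a-1}$. It remains to show that $(\partial_\eta Q^{\eta\beta\alpha\gamma}_{a,a-2})g_{\beta\alpha,\gamma}$ is negligible. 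For this I will use a divergence-free property of $Q$ analogous to \eqref{eq:div-free}. The intrinsic curvature factors satisfy the second Bianchi identity on $\partial M$, and $\nabla_\eta L_i^j$ antisymmetrized is a curvature term by Codazzi. Hence $\partial_\eta Q^{\eta\cdots}$ equals a Christoffel-symbol correction of size $O(r^{-(a-1)(\tau+2)-2(\tau+1)+\ldots})$ plus a Codazzi term carrying an extra curvature factor, exactly as \eqref{eq:firstderivative0} was obtained. Multiplying by $g_{\beta\alpha,\gamma}=O(r^{-\tau-1})$ then lands in $O(r^{-(a+1)\tau-2a+1})$, which completes the argument.
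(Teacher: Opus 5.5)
Your argument for the first identity is the paper's: contract one extra curvature factor into $Q_{a,a-2}$, insert the linearized curvature, and use the antisymmetry of $Q_{a,a-2}$ in $(\eta,\beta)$ and $(\alpha,\gamma)$. Your plan does not reach the second identity. Since $\partial_\eta\bigl(Q_{a,a-2}^{\eta\beta\alpha\gamma}g_{\beta\alpha,\gamma}\bigr)=Q_{a,a-2}^{\eta\beta\alpha\gamma}g_{\beta\alpha,\gamma\eta}+\bigl(\partial_\eta Q_{a,a-2}^{\eta\beta\alpha\gamma}\bigr)g_{\beta\alpha,\gamma}$, the first identity shows that the second is \emph{equivalent} to
\[
g_{\beta\alpha,\gamma}P_a^{n\beta\alpha\gamma}=-(a-1)\bigl(\partial_\eta Q_{a,a-2}^{\eta\beta\alpha\gamma}\bigr)g_{\beta\alpha,\gamma}+O\bigl(r^{-(a+1)\tau-2a+1}\bigr).
\]
So the term you set out to prove negligible is, up to the factor $-(a-1)$, the left-hand side itself. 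Your two closing claims are $g_{\beta\alpha,\gamma}P_a^{n\beta\alpha\gamma}\approx(a-1)Q_{a,a-2}^{\eta\beta\alpha\gamma}g_{\beta\alpha,\gamma\eta}=(a-1)\mathscr{B}^{a-1}$ and $(\partial_\eta Q_{a,a-2}^{\eta\beta\alpha\gamma})g_{\beta\alpha,\gamma}\approx 0$. Together they make the right-hand side of the second identity $O(r^{-(a+1)\tau-2a+1})$, while the left-hand side is $(a-1)\mathscr{B}^{a-1}$, which is generically of exact order $r^{-a\tau-2a+1}$. So they cannot both hold, and in fact both are false.

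The step that fails is the asserted divergence-free property of $Q_{a,a-2}$. Unlike $P_a$, it is not built from curvature alone. When $\partial_\eta$ hits the $R$ factors, the contribution does cancel by the linearized second Bianchi identity and the antisymmetry of the delta; this is the paper's \eqref{deriv of Q}. When it hits $L_i^j$, however, the delta antisymmetrizes $\eta$ and $i$. Codazzi then gives $\partial_\eta L^j_i-\partial_iL^j_\eta=R_{\eta i}{}^{nj}+O(r^{-2\tau-2})$, a mixed normal curvature component of size $r^{-\tau-2}$, the same size as $\partial L$, so there is no gain of $r^{-\tau}$.

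This surviving term is exactly what $P_a^{n\beta\alpha\gamma}$ is made of. With $n$ fixed in the upper row and $g^{j_{2a-1}\alpha}g^{j_{2a}\gamma}$ tangential to leading order, an $n$ must appear in the lower row of the delta, i.e. as an upper index of one curvature factor. This gives $2(a-1)$ equal terms $\mathrm{I}_1$ built on $R_{i_1i_2}{}^{nj_2}$. Applying Codazzi to that factor yields $\mathrm{I}_1=-\tfrac12(\partial_\eta Q_{a,a-2}^{\eta\beta\alpha\gamma})g_{\beta\alpha,\gamma}+O(r^{-(a+1)\tau-2a+1})$. The product rule and the first identity then give the claim, as in the paper.

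So Codazzi must be used to \emph{identify} the normal curvature in $P_a^{n\beta\alpha\gamma}$ with the non-vanishing divergence of $Q_{a,a-2}$, not to show that this divergence is small. You wrote down this identification in your third paragraph but then abandoned it for the contradictory reduction.
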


\begin{proof}
We first prove \eqref{eq:i1 part}. Substituting \eqref{behavior of Rm} into the definition of $\mathscr{B}^{a-1}$ yields
\begin{align}
&2^{a-1}\mathscr{B}^{a-1}\no\\
 =&\sum_{i,j=1}^{n-1}\delta_{ j_{2}\cdots j_{2a-3}j_{2a-2}\alpha\gamma}^{ i_{2}\cdots i_{2a-3}i_{2a-2}\eta\beta}L_{i_{2}}^{j_{2}}R_{\eta\beta}^{\quad\,\,\alpha\gamma}R_{i_{3}i_{4}}{}^{j_{3}j_{4}}\cdots R_{i_{2a-3}i_{2a-2}}{}^{j_{2a-3}j_{2a-2}}\no\\
= & \sum_{i,j=1}^{n-1}\delta_{ j_{2}\cdots j_{2a-2}\alpha\gamma}^{ i_{2}\cdots i_{2a-2}\eta\beta}L_{i_{2}}^{j_{2}}\frac{1}{2}\bigg(\frac{\partial^{2}g_{\gamma \eta}}{\partial x^{\alpha}\partial x^{\beta}}+\frac{\partial^{2}g_{\alpha\beta}}{\partial x^{\gamma}\partial x^{\eta}}-\frac{\partial^{2}g_{\alpha \eta}}{\partial x^{\gamma}\partial x^{\beta}}-\frac{\partial^{2}g_{\gamma\beta}}{\partial x^{\alpha}\partial x^{\eta}}\bigg)\cdots R_{i_{2a-3}i_{2a-2}}{}^{j_{2a-3}j_{2a-2}}\nonumber \\
 & +O\bigl(r^{-(a+1)\tau-2a+1}\bigr)\nonumber \\
= & 2\sum_{i,j=1}^{n-1}\delta_{ j_{2}\cdots j_{2a-3}j_{2a-2}\alpha\gamma}^{ i_{2}\cdots i_{2a-3}i_{2a-2}\eta\beta}L_{i_{2}}^{j_{2}}g_{\beta\alpha,\gamma \eta}R_{i_{3}i_{4}}{}^{j_{3}j_{4}}\cdots R_{i_{2a-3}i_{2a-2}}{}^{j_{2a-3}j_{2a-2}}+O\bigl(r^{-(a+1)\tau-2a+1}\bigr).\nonumber \\
=& 2^{a-1}Q_{a,a-2}^{\eta\beta\alpha\gamma}g_{\beta\alpha,\gamma\eta}+O\bigl(r^{-(a+1)\tau-2a+1}\bigr).\no
\end{align}
This proves \eqref{eq:i1 part}. We next analyze the term $g_{\beta\alpha,\gamma}P_a^{n\beta\alpha\gamma}$.

For $1\le p\le 2a-2$, set
\[
\mathrm{I}_{p}:=\frac{1}{2^{a}}\sum_{i,j=1}^{n-1}\delta_{j_{1}\cdots j_{p-1}n\cdots j_{2a-2}\alpha\gamma}^{i_{1}\cdots i_{p-1}i_{p}\cdots i_{2a-2}n\beta}R_{i_{1}i_{2}}{}^{j_{1}j_{2}}\cdots R_{i_{p}i_{p+1}}^{\quad\quad nj_{p+1}}R_{i_{2a-3}i_{2a-2}}{}^{j_{2a-3}j_{2a-2}}g_{\beta\alpha,\gamma}.
\]
By symmetry, $\mathrm{I}_{p}=\mathrm{I}_{1}$ for all $p$.

Consequently,
\begin{align}\label{boundary term1}
 & g_{\beta\alpha,\gamma}P_a^{n\beta\alpha\gamma}\no\\
%= & \frac{1}{2^{a}}\delta_{j_{1}\cdots j_{2a-2}j_{2a-1}j_{2a}}^{i_{1}\cdots i_{2a-2}n\beta}R_{i_{1}i_{2}}{}^{j_{1}j_{2}}\cdots R_{i_{2a-3}i_{2a-2}}{}^{j_{2a-3}j_{2a-2}}g^{j_{2a-1}\alpha}g^{j_{2a}\gamma}g_{\beta\alpha,\gamma}\no\\
= & \frac{1}{2^{a}}\delta_{j_{1}\cdots j_{2a-2}\alpha\gamma}^{i_{1}\cdots i_{2a-2}n\beta}R_{i_{1}i_{2}}{}^{j_{1}j_{2}}\cdots R_{i_{2a-3}i_{2a-2}}{}^{j_{2a-3}j_{2a-2}}g_{\beta\alpha,\gamma}+O(r^{-(\tau+2)(a-1)-(2\tau+1)})\no\\
= & \frac{1}{2^{a}}\sum_{i,j=1}^{n-1}\delta_{nj_{2}\cdots j_{2a-2}\alpha\gamma}^{i_{1}i_{2}\cdots i_{2a-2}n\beta}R_{i_{1}i_{2}}{}^{nj_{2}}\cdots R_{i_{2a-3}i_{2a-2}}{}^{j_{2a-3}j_{2a-2}}g_{\beta\alpha,\gamma}\no\\
 & +\frac{1}{2^{a}}\sum_{i,j=1}^{n-1}\delta_{j_{1}n\cdots j_{2a-3}j_{2a-2}\alpha\gamma}^{i_{1}i_{2}\cdots i_{2a-3}i_{2a-2}n\beta}R_{i_{1}i_{2}}{}^{j_{1}n}\cdots R_{i_{2a-3}i_{2a-2}}{}^{j_{2a-3}j_{2a-2}}g_{\beta\alpha,\gamma}\no\\
 & +\cdots\no\\
 & +O(r^{-(\tau+2)(a-1)-(2\tau+1)})\no\\
= & \mathrm{I_{1}+\cdots+I_{2a-2}}+O(r^{-(\tau+2)(a-1)-(2\tau+1)})\no\\
= & 2(a-1)\mathrm{I_{1}}+O(r^{-(\tau+2)(a-1)-(2\tau+1)}).
\end{align}

By \eqref{behavior of Rm} together with symmetry and antisymmetry of indices, we have
\[
\sum_{i,j=1}^{n-1}\delta_{nj_{2}\cdots j_{2a-2}\alpha\gamma}^{i_{1}i_{2}\cdots i_{2a-2}n\beta}L_{i_{2}}^{j_{2}}\bigg(R_{i_{3}i_{4}}{}^{j_{3}j_{4}}\cdots R_{i_{2a-3}i_{2a-2}}{}^{j_{2a-3}j_{2a-2}}\bigg)_{i_{1}}g_{\beta\alpha,\gamma}=O\bigl(r^{-(a+1)\tau-2a+1}\bigr).
\]
Combining this with the definition of $Q_{a,a-2}^{\eta\beta\alpha\gamma}$ yields
\begin{align}\label{deriv of Q}
&\sum_{\eta=1}^{n-1}\partial_{\eta }(Q_{a,a-2}^{\eta\beta \alpha\gamma})g_{\beta \alpha,\gamma}\no\\
&=\frac{1}{2^{a-2}}\sum_{i,j=1}^{n-1}\delta_{jj_{1}\cdots j_{2a-4}\alpha\gamma}^{ii_{1}\cdots i_{2a-4}\eta\beta}\partial_{\eta}(L_{i}^{j})R_{i_{1}i_{2}}{}^{j_{1}j_{2}}\cdots R_{i_{2a-5}i_{2a-4}}{}^{j_{2a-5}j_{2a-4}}g_{\beta \alpha,\gamma}+O(r^{-(a+1)\tau-2a+1}).
\end{align}

Combining with $R_{i_{1}i_{2}}{}^{nj_{2}}=L_{i_{2},i_{1}}^{j_{2}}-L_{i_{1},i_{2}}^{j_{2}}+O(r^{-2\tau-2})$
for $1\le i_{1},j_{2},i_{2}\le n-1,$ it follows that 
\begin{align}
&2^{a}\mathrm{I_{1}}\no\\
= & \sum_{i,j=1}^{n-1}\delta_{nj_{2}\cdots j_{2a-3}j_{2a-2}\alpha\gamma}^{i_{1}i_{2}\cdots i_{2a-3}i_{2a-2}n\beta}R_{i_{1}i_{2}}{}^{nj_{2}}\cdots R_{i_{2a-3}i_{2a-2}}{}^{j_{2a-3}j_{2a-2}}g_{\beta\alpha,\gamma}\no\\
= & \sum_{i,j=1}^{n-1}\delta_{nj_{2}\cdots j_{2a-3}j_{2a-2}\alpha\gamma}^{i_{1}i_{2}\cdots i_{2a-3}i_{2a-2}n\beta}(L_{i_{2},i_{1}}^{j_{2}}-L_{i_{1},i_{2}}^{j_{2}})R_{i_{3}i_{4}}{}^{j_{3}j_{4}}\cdots R_{i_{2a-3}i_{2a-2}}{}^{j_{2a-3}j_{2a-2}}g_{\beta\alpha,\gamma}+O(r^{-(a+1)\tau-2a+1})\nonumber \\
= & 2\sum_{i,j=1}^{n-1}\delta_{nj_{2}\cdots j_{2a-3}j_{2a-2}\alpha\gamma}^{i_{1}i_{2}\cdots i_{2a-3}i_{2a-2}n\beta}L_{i_{2},i_{1}}^{j_{2}}R_{i_{3}i_{4}}{}^{j_{3}j_{4}}\cdots R_{i_{2a-3}i_{2a-2}}{}^{j_{2a-3}j_{2a-2}}g_{\beta\alpha,\gamma}+O(r^{-(a+1)\tau-2a+1})\nonumber \\
%= & 2\sum_{i,j=1}^{n-1}\delta_{nj_{2}\cdots j_{2a-3}j_{2a-2}\alpha\gamma}^{i_{1}i_{2}\cdots i_{2a-3}i_{2a-2}n\beta}\bigg(L_{i_{2}}^{j_{2}}R_{i_{3}i_{4}}{}^{j_{3}j_{4}}\cdots R_{i_{2a-3}i_{2a-2}}{}^{j_{2a-3}j_{2a-2}}g_{\beta\alpha,\gamma}\bigg)_{i_{1}}\nonumber \\
% & +2\sum_{i,j=1}^{n-1}\delta_{\alpha j_{2}\cdots j_{2a-3}j_{2a-2}\gamma}^{i_{1}i_{2}\cdots i_{2a-3}i_{2a-2}\beta}L_{i_{2}}^{j_{2}}R_{i_{3}i_{4}}{}^{j_{3}j_{4}}\cdots R_{i_{2a-3}i_{2a-2}}{}^{j_{2a-3}j_{2a-2}}g_{\beta\alpha,\gamma i_{1}}\nonumber \\
 %& +O(r^{-(a+1)\tau-2a+1}),\nonumber 
 =&-2^{a-1}\sum_{i_1=1}^{n-1}\partial_{i_1}(Q_{a,a-2}^{i_1\beta \alpha\gamma}g_{\beta \alpha,\gamma})+2^{a-1}Q_{a,a-2}^{i_1\beta \alpha\gamma}g_{\beta \alpha,\gamma i_1}+O(r^{-(a+1)\tau-2a+1}),\no 
\end{align}
where in the last equality is due to \eqref{deriv of Q} and $\delta_{nj_{2}\cdots j_{2a-3}j_{2a-2}\alpha\gamma}^{i_{1}i_{2}\cdots i_{2a-3}i_{2a-2}n\beta}=-\delta_{\alpha j_{2}\cdots j_{2a-3}j_{2a-2}\gamma}^{i_{1}i_{2}\cdots i_{2a-3}i_{2a-2}\beta}$.

Combining with \eqref{eq:i1 part}, we obtain
\begin{align*}
\mathrm{I_{1}}= & -\frac 1 2 \sum_{i_1,\beta, \alpha,\gamma =1}^{n-1}\partial_{i_1}\bigl(Q_{a,a-2}^{i_1\beta \alpha\gamma}g_{\beta \alpha,\gamma}\bigr)+\frac 1 2  \mathscr{B}^{a-1}+O\bigl(r^{-(a+1)\tau-2a+1}\bigr).
\end{align*}
By \eqref{boundary term1}, we derive
\begin{align}
  g_{\beta\alpha,\gamma}P_a^{n\beta\alpha\gamma}
=-(a-1) \sum_{i_1,\beta, \alpha,\gamma =1}^{n-1}\partial_{i_1}\bigl(Q_{a,a-2}^{i_1\beta \alpha\gamma}g_{\beta \alpha,\gamma}\bigr)+(a-1)\mathscr{B}^{a-1}+O\bigl(r^{-(a+1)\tau-2a+1}\bigr).\no
\end{align}
This proves \eqref{eq:boundary term 2} and completes the proof.
\end{proof}

We next record the following key identity for the boundary curvature.
\begin{thm}\label{thm:key boundary equality}
Suppose that $(M^{n},g)$ is an asymptotically flat half-manifold of order $\tau$.
On $\partial M$, for $r=|x|$ sufficiently large, we have
\begin{equation}\label{eq:equality near infinity}
g_{jk,l}P_{a}^{njkl}=(g_{n\beta}P_a^{n\beta\alpha n})_{,\alpha}-(a-1)\partial_{\eta}\bigl(Q_{a,a-2}^{\eta\beta\alpha\gamma}g_{\beta\alpha,\gamma}\bigr)+a\mathscr{B}^{a-1}+O\bigl(r^{-(a+1)\tau-2a+1}\bigr),
\end{equation}
where $(g_{n\beta}P_a^{n\beta\alpha n})_{,\alpha}:=\partial_{\alpha}(g_{n\beta}P_a^{n\beta\alpha n})$ denotes an ordinary partial derivative.
\end{thm}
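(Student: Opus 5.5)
Split $g_{jk,l}P_a^{njkl}$ according to how many of $j,k,l$ equal $n$, and treat each group with the preceding Lemma or with the second fundamental form of $\partial M$. By the antisymmetry \eqref{eq:antisymm}, $P_a^{nnkl}=0$, so $j=\beta$ is tangential and
\[
g_{jk,l}P_a^{njkl}=g_{\beta\alpha,\gamma}P_a^{n\beta\alpha\gamma}+g_{\beta n,\gamma}P_a^{n\beta n\gamma}+g_{\beta\alpha,n}P_a^{n\beta\alpha n}.
\]
The purely tangential term $g_{\beta\alpha,\gamma}P_a^{n\beta\alpha\gamma}$ is exactly what \eqref{eq:boundary term 2} treats. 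It gives $(a-1)\mathscr{B}^{a-1}-(a-1)\partial_\eta(Q_{a,a-2}^{\eta\beta\alpha\gamma}g_{\beta\alpha,\gamma})$ up to the allowed error. It then remains to show that the two mixed terms together equal $(g_{n\beta}P_a^{n\beta\alpha n})_{,\alpha}+\mathscr{B}^{a-1}$ modulo $O(r^{-(a+1)\tau-2a+1})$.

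For the mixed terms, use $P_a^{n\beta n\gamma}=P_a^{n\gamma n\beta}$ (from \eqref{eq:antisymm}) together with $P^{n\beta\alpha n}=-P^{n\beta n\alpha}$. Their sum is
\[
P_a^{n\beta n\alpha}\bigl(g_{\beta n,\alpha}-g_{\beta\alpha,n}\bigr).
\]
To leading order the second fundamental form of $\partial M$ (with $\nu\approx-\partial_n$) is $L_{\alpha\beta}=-\Gamma^n_{\alpha\beta}+O(r^{-2\tau-1})=-\tfrac12(g_{n\alpha,\beta}+g_{n\beta,\alpha}-g_{\alpha\beta,n})+O(r^{-2\tau-1})$. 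Since $P^{n\beta n\alpha}$ is symmetric in $\alpha\beta$, the combination becomes
\[
P_a^{n\beta n\alpha}\bigl(2L_{\alpha\beta}+g_{n\alpha,\beta}\bigr)+\text{error}.
\]
The piece $P_a^{n\beta n\alpha}g_{n\beta,\alpha}$ is rewritten as $\partial_\alpha(g_{n\beta}P_a^{n\beta n\alpha})-g_{n\beta}\partial_\alpha P_a^{n\beta n\alpha}$. By \eqref{eq:first derivative}, the last product is $O(r^{-\tau})\cdot O(r^{-(\tau+2)(a-1)-(\tau+1)})=O(r^{-(a+1)\tau-2a+1})$, which is an admissible error. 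The sign convention $P^{n\beta\alpha n}$ versus $P^{n\beta n\alpha}$ has to be matched with the statement, and it must be checked that the remaining $L$-term has the right sign.

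The remaining step is to identify $2P_a^{n\beta n\alpha}L_{\alpha\beta}$ with $\mathscr{B}^{a-1}$ up to error. Replace the $g^{\cdot\cdot}$ in $P_a$ by $\delta$, which costs a relative $O(r^{-\tau})$ and is harmless. Then expand $P_a^{n\beta n\alpha}$ via the generalized Kronecker delta with upper indices $(i_1\dots i_{2a-2}n\beta)$ and lower indices $(j_1\dots j_{2a-2}n\alpha)$. The leading terms are those where all remaining indices are tangential, giving
\[
\tfrac{1}{2^a}\sum_{1}^{n-1}\delta^{i_1\cdots i_{2a-2}\beta}_{j_1\cdots j_{2a-2}\alpha}R\cdots R .
\]
Multiplying by $2L_\beta^\alpha$ reproduces exactly $\frac{1}{2^{a-1}}\delta R\cdots R\,L=\mathscr{B}^{a-1}$.

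The other terms have an extra $n$ among the $i$'s or $j$'s, so they involve curvature components $R_{\cdot\cdot}{}^{n\cdot}$. I expect this to be the main obstacle. These terms are not automatically lower order: each is $O(r^{-\tau-2})$, the same size as the other curvature factors. The plan is to argue as in the Lemma. Using Codazzi, $R_{\gamma\delta}{}^{n\epsilon}=L^\epsilon_{\delta,\gamma}-L^\epsilon_{\gamma,\delta}+O(r^{-2\tau-2})$, so such a term carries at least two factors of $L$ (one derivative of $L$ and one $L$) times $a-2$ further curvatures and the $L$ already present. Its size is $O(r^{-(\tau+1)}r^{-(\tau+2)}r^{-(\tau+2)(a-2)}r^{-(\tau+1)})=O(r^{-a\tau-2a})$, which is not yet small enough. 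So one must either integrate by parts tangentially, as in \eqref{deriv of Q}, or observe that such terms cancel by the antisymmetry of $\delta$ against the symmetry of $L_{\alpha\beta}$ combined with the antisymmetrized derivative. I would try first to show this cancellation directly: in $\delta^{\cdots n\beta}_{\cdots n\alpha}$ an extra $n$ cannot appear because $n$ already occupies a slot in both rows. Every term in the expansion therefore has $i$'s and $j$'s tangential, and the issue disappears. If this holds, it settles the step, and summing the three pieces yields \eqref{eq:equality near infinity}.
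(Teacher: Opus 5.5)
Your route is the paper's own: the same three-term decomposition, \eqref{eq:boundary term 2} for the purely tangential term, \eqref{eq:second fundamental behavior} to eliminate $g_{\beta\alpha,n}$, integration by parts controlled by \eqref{eq:first derivative}, and \eqref{eq:LR} to identify the $L$-term with $\mathscr{B}^{a-1}$. The one real defect is the sign bookkeeping you left open. It cannot be "matched" at the end; as written, your computation gives the divergence term with the wrong sign.

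With the paper's convention ($\nu$ outward, so $\langle\nu,\partial_n\rangle=-1/\sqrt{g^{nn}}$) one gets
\[
L_{\alpha\beta}=-\Gamma^k_{\alpha\beta}\langle\nu,\partial_k\rangle=+\Gamma^n_{\alpha\beta}/\sqrt{g^{nn}},
\]
not $-\Gamma^n_{\alpha\beta}$. This is exactly \eqref{eq:second fundamental behavior}, and it gives $L_{\alpha\beta}=e^{-u}u_n\delta_{\alpha\beta}$ for $g=e^{-2u}g_{\mathbb{E}}$, as in Section~4. You are not free to flip it. The Lemma you use for the tangential term is stated with this $L$, through $\mathscr{B}^{a-1}$, through $Q_{a,a-2}$, and through $R_{i_1i_2}{}^{nj_2}=L^{j_2}_{i_2,i_1}-L^{j_2}_{i_1,i_2}$. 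With the correct sign,
\begin{align*}
g_{\beta n,\alpha}P_a^{n\beta n\alpha}+g_{\beta\alpha,n}P_a^{n\beta\alpha n}
&=P_a^{n\beta n\alpha}\bigl(2L_{\alpha\beta}-g_{n\alpha,\beta}\bigr)+O\bigl(r^{-(a+1)\tau-2a+1}\bigr)\\
&=2L_{\alpha\beta}P_a^{n\beta n\alpha}+g_{n\beta,\alpha}P_a^{n\beta\alpha n}+O\bigl(r^{-(a+1)\tau-2a+1}\bigr).
\end{align*}
Here you use the $\alpha\beta$-symmetry of $P_a^{n\beta n\alpha}$ and $P_a^{n\beta n\alpha}=-P_a^{n\beta\alpha n}$. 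The minus sign in front of $g_{n\alpha,\beta}$ is precisely what produces $P_a^{n\beta\alpha n}$ in the divergence term. Your expression $P_a^{n\beta n\alpha}(2L_{\alpha\beta}+g_{n\alpha,\beta})$ is wrong under either convention: with your $L=-\Gamma^n$ the sum is $-P_a^{n\beta n\alpha}(2L_{\alpha\beta}+g_{n\alpha,\beta})$. Integrating your expression by parts yields $-(g_{n\beta}P_a^{n\beta\alpha n})_{,\alpha}$.

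On the last step, your closing observation is the correct and complete argument, and it is what lies behind \eqref{eq:LR}. After replacing $g^{j_{2a-1}n}g^{j_{2a}\alpha}$ by Kronecker deltas, $n$ occupies a slot in both rows of $\delta^{i_1\cdots i_{2a-2}n\beta}_{j_1\cdots j_{2a-2}n\alpha}$. All remaining indices are therefore tangential, and no $R_{\cdot\cdot}{}^{n\cdot}$ ever appears. Drop the intermediate Codazzi estimate, because it is wrong. $R_{\gamma\delta}{}^{n\epsilon}\approx L^{\epsilon}_{\delta,\gamma}-L^{\epsilon}_{\gamma,\delta}$ is of order $r^{-\tau-2}$, the same as any curvature component, and your count includes the factor $L_{\alpha\beta}$ twice. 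Such terms, if present, would be of the same order $r^{-(\tau+2)(a-1)-\tau-1}$ as $\mathscr{B}^{a-1}$ itself. Only their identical vanishing saves the step, not any decay gain.
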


\begin{proof}
First, we decompose
\begin{align}
g_{jk,l}P_a^{njkl} & =g_{jn,l}P_a^{njnl}+g_{j\alpha,l}P_a^{nj\alpha l}\no\\
 & =g_{\alpha n,\beta}P_a^{n\alpha n\beta}+g_{\beta\alpha,l}P_a^{n\beta\alpha l}\nonumber \\
 & =g_{\alpha n,\beta}P_a^{n\alpha n\beta}+g_{\beta\alpha,n}P_a^{n\beta\alpha n}+g_{\beta\alpha,\gamma}P_a^{n\beta\alpha\gamma}.\label{eq:equality 1}
\end{align}
From \eqref{eq:asymptotic behavior} and the definition of $L_{\alpha\beta}$, we have
\begin{equation}\label{eq:second fundamental behavior}
\frac{\partial g_{\alpha\beta}}{\partial x^{n}}=\frac{\partial g_{n\alpha}}{\partial x^{\beta}}+\frac{\partial g_{n\beta}}{\partial x^{\alpha}}-2L_{\alpha\beta}+O\bigl(r^{-2\tau-1}\bigr).
\end{equation}
Meanwhile, $|L_{\alpha\beta}|=O\bigl(r^{-\tau-1}\bigr)$ by \eqref{eq:asymptotic behavior}.

Combining \eqref{eq:equality 1}, \eqref{eq:second fundamental behavior}, and \eqref{asy of Pa}, we obtain
\begin{align}\label{eq:boundary expansion 1}
 & g_{jk,l}P_a^{njkl}\no\\
= & \left(\frac{\partial g_{n\alpha}}{\partial x^{\beta}}+\frac{\partial g_{n\beta}}{\partial x^{\alpha}}-2L_{\alpha\beta}\right)P_a^{n\beta\alpha n}+g_{\alpha n,\beta}P_a^{n\alpha n\beta}+g_{\beta\alpha,\gamma}P_a^{n\beta\alpha\gamma}\no\\
&+O\bigl(r^{-2\tau-1-(2+\tau)(a-1)}\bigr)\nonumber \\
= & g_{n\beta,\alpha}P_a^{n\beta\alpha n}+g_{\beta\alpha,\gamma}P_a^{n\beta\alpha\gamma}+2L_{\alpha\beta}P_a^{n\beta n\alpha}+O\bigl(r^{-\tau(a+1)-2a+1}\bigr),
\end{align}
where we used \eqref{eq:antisymm}.

Recall
\[
P_a^{n\beta n\alpha}=\frac{1}{2^{a}}\sum_{i,j=1}^{n}\delta_{j_{1}\cdots j_{2a-2}j_{2a-1}j_{2a}}^{i_{1}\cdots i_{2a-2}n\beta}R_{i_{1}i_{2}}{}^{j_{1}j_{2}}\cdots R_{i_{2a-3}i_{2a-2}}{}^{j_{2a-3}j_{2a-2}}g^{j_{2a-1}n}g^{j_{2a}\alpha},
\]
where, by a slight abuse of notation, the symbols $i,j$ are used to indicate indices ranging over $\{1,\dots,n\}$.

It follows from \eqref{asy of Rm} that
\begin{align}\label{eq:LR}
P_a^{n\beta n\alpha}L_{\alpha\beta}
=\frac{1}{2}\mathscr{B}^{a-1}+O\bigl(r^{-(\tau+2)(a-1)-\tau-(\tau+1)}\bigr).
\end{align}
Moreover,
\begin{equation}\label{eq:derivative by parts}
g_{n\beta,\alpha}P_a^{n\beta\alpha n}=(g_{n\beta}P_a^{n\beta\alpha n})_{,\alpha}-g_{n\beta}(P_a^{n\beta\alpha n})_{,\alpha},
\end{equation}
where the derivatives are ordinary partial derivatives. Thus, using \eqref{eq:derivative by parts}, \eqref{eq:first derivative}, and \eqref{eq:asymptotic behavior}, we have
\begin{equation}\label{eq:boundary term 1}
g_{n\beta,\alpha}P_a^{n\beta\alpha n}=(g_{n\beta}P_a^{n\beta\alpha n})_{,\alpha}+O\bigl(r^{-(a+1)\tau-2a+1}\bigr).
\end{equation}

Finally, combining \eqref{eq:boundary expansion 1}, \eqref{eq:boundary term 1}, \eqref{eq:boundary term 2}, and \eqref{eq:LR} yields \eqref{eq:equality near infinity}.

\iffalse
\begin{align*}
 & g_{jk,l}P_a^{njkl}\\
= & (g_{n\beta}P_a^{n\beta\alpha n})_{,\alpha}+\frac{a-1}{2^{a-2}}\delta_{j_{3}\cdots j_{2a-2}nj_{2}\alpha\gamma}^{i_{3}\cdots i_{2a-2}i_{1}i_{2}n\beta}\bigg(L_{i_{2}}^{j_{2}}R_{i_{3}i_{4}}{}^{j_{3}j_{4}}\cdots R_{i_{2a-3}i_{2a-2}}{}^{j_{2a-3}j_{2a-2}}g_{\beta\alpha,\gamma}\bigg)_{i_{1}}\\
 & +\frac{a}{2^{a-1}}\delta_{\alpha j_{2}\cdots j_{2a-3}j_{2a-2}\gamma}^{i_{1}i_{2}\cdots i_{2a-3}i_{2a-2}\beta}L_{i_{2}}^{j_{2}}R_{i_{1}\beta}^{\quad\,\,\alpha\gamma}R_{i_{3}i_{4}}{}^{j_{3}j_{4}}\cdots R_{i_{2a-3}i_{2a-2}}{}^{j_{2a-3}j_{2a-2}}\\
 & +O(r^{-(a+2)\tau-2a+1}).
\end{align*}
\fi
\end{proof}
Recall $S_{R}^{+}=\{x=(x^{1},\cdots,x^{n})||x|=R,\,x^{n}>0\}$, $S_{R}^{n-2}=\{x=(x^{1},\cdots,x^{n})||x|=R,\,x^{n}=0\}$
and $B_{R}^{n-1}=\{x=(x^{1},\cdots,x^{n})||x|\le R,\,x^{n}=0\}.$

From (\ref{eq:div structure}), let $\nu$ be the outer normal vector
on $\partial B_{R}^{+}$, it holds that

\begin{align}
\int_{B_{R}^{+}}L_{a}dv_{g}  %=2\int_{S_{R}^{+}}g_{jk,l}P_a^{ijkl}\nu_{i}d\sigma+2\int_{B_{R}^{n-1}}g_{jk,l}P_a^{ijkl}\nu_{i}d\sigma+O(1)\label{eq:integration}\\
 =2\int_{S_{R}^{+}}g_{jk,l}P_a^{ijkl}\nu_{i}d\sigma-2\int_{B_{R}^{n-1}}g_{jk,l}P_a^{njkl}d\sigma+O(1).\nonumber 
\end{align}
Combining this with Theorem~\ref{thm:key boundary equality}, we obtain
\begin{align*}
\int_{B_{R}^{+}}L_{a}dv_{g}
= & \,2\int_{S_{R}^{+}}g_{jk,l}P_a^{ijkl}\nu_{i}\,d\sigma+O(1)\\
&-2\int_{B_{R}^{n-1}}\left((g_{n\beta}P_a^{n\beta\alpha n})_{,\alpha}-(a-1)\partial_{\eta}\bigl(Q_{a,a-2}^{\eta\beta\alpha\gamma}g_{\beta\alpha,\gamma}\bigr)+a\mathscr{B}^{a-1}\right)d\sigma.
\end{align*}
Therefore,
\begin{align}
 & \int_{B_{R}^{+}}L_{a}dv_{g}+2a\int_{B_{R}^{n-1}}\mathscr{B}^{a-1}d\sigma_{g}\label{eq:volume and surface interg}\\
= & \,2\int_{S_{R}^{+}}g_{jk,l}P_a^{ijkl}\nu_{i}\,d\sigma+2\int_{S_{R}^{n-2}}g_{n\beta}P_a^{n\beta n\alpha}\mu_{\alpha}\,dl\nonumber \\
 & +2(a-1)\int_{S_{R}^{n-2}}Q_{a,a-2}^{\eta\beta\alpha\gamma}g_{\beta\alpha,\gamma}\mu_{\eta}\,dl+O(1).\nonumber
\end{align}
Here $\mu$ denotes the outward unit normal vector on $S_{R}^{n-2}\subset B_{R}^{n-1}.$ 

As a consequence of \eqref{eq:volume and surface interg}, we obtain the following.
\begin{thm}
Suppose that $(M^{n},g)$ is an asymptotically flat half-manifold of decay order $\tau>\frac{n-2a}{a+1}$ and that
$\int_{M}L_{a}dv_{g}+2a\int_{\partial M}\mathscr{B}^{a-1}d\sigma_{g}$ is finite. Then the mass $\mathfrak{m}_{a,B}(g)$ is well defined.
\end{thm}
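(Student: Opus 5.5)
The plan is to read off well-definedness directly from the integral identity \eqref{eq:volume and surface interg}. That identity was derived on $B_R^+\setminus K$ for large $R$. The $O(1)$ term in it has two sources:
\begin{itemize}
\item a fixed compact contribution, namely the integral of $L_a$ over the bounded core and the corresponding flux through a fixed inner sphere;
\item integrals of the error terms from \eqref{eq:div structure} and Theorem~\ref{thm:key boundary equality}.
\end{itemize}
The first step is to make this precise by fixing $R_0$ large and integrating only over the annular region $A_{R_0,R}=B_R^+\setminus B_{R_0}^+$ and the boundary annulus $B_R^{n-1}\setminus B_{R_0}^{n-1}$. The identity then takes the form
\[
\int_{A_{R_0,R}}L_a\,dv_g+2a\int_{B_R^{n-1}\setminus B_{R_0}^{n-1}}\mathscr{B}^{a-1}d\sigma_g=\frac{2}{c(n,a)}\Phi(R)-\frac{2}{c(n,a)}\Phi(R_0)+E(R_0,R).
\]
Here $\Phi(R)$ denotes the bracketed expression in \eqref{eq:definition of boundary mass} at radius $R$, i.e.\ $\mathfrak m_{a,B}$ before taking the limit. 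The term $E(R_0,R)$ collects the integrated error terms, and we also account for replacing $dv_g$ by $dx$ and $d\sigma_g$ by $d\sigma$.

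The second step is to show that $E(R_0,R)$ converges as $R\to\infty$, i.e.\ that each error integrand is integrable at infinity.
\begin{itemize}
\item \emph{Interior error.} It is $O(r^{-(a+1)\tau-2a})$ on an $n$-dimensional region, so it is integrable exactly when $(a+1)\tau+2a>n$, i.e.\ $\tau>\frac{n-2a}{a+1}$.
\item \emph{Boundary errors.} These are $O(r^{-(a+1)\tau-2a+1})$ on the $(n-1)$-dimensional $\partial M$, so they are integrable under the same condition $(a+1)\tau+2a-1>n-1$.
\item \emph{Volume-form corrections.} The differences $(dv_g-dx)L_a$ and $(d\sigma_g-d\sigma)\mathscr{B}^{a-1}$ are of size $O(r^{-\tau})\,|L_a|$ and $O(r^{-\tau})\,|\mathscr{B}^{a-1}|$. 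Since $L_a=O(r^{-a(\tau+2)})$ and $\mathscr{B}^{a-1}=O(r^{-(a-1)(\tau+2)-(\tau+1)})$, these decay at least as fast as the errors above, so the same computation applies.
\end{itemize}
With this in hand, the left-hand side converges as $R\to\infty$ by the finiteness hypothesis on $\int_M L_a+2a\int_{\partial M}\mathscr{B}^{a-1}$, read as an improper limit over the exhaustion $B_R^+$. Hence $\Phi(R)$ has a finite limit, and $\mathfrak{m}_{a,B}(g)$ exists.

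The main obstacle is bookkeeping rather than any new idea. One point needs care: the decomposition of $\partial(B_R^+\setminus B_{R_0}^+)$ into the hemisphere and flat pieces must carry the correct signs and co-normal orientations. In particular, the divergence theorem on $B_R^{n-1}$ applied to the tangential divergences $(g_{n\beta}P_a^{n\beta\alpha n})_{,\alpha}$ and $\partial_\eta(Q\,g_{,})$ produces exactly the $S_R^{n-2}$ terms of \eqref{eq:definition of boundary mass} with $\mu$ outward. We use the antisymmetry \eqref{eq:antisymm} to switch $P^{n\beta\alpha n}$ to $-P^{n\beta n\alpha}$, which accounts for the sign. A second point is that the finiteness hypothesis is only of the combined quantity, not of each piece separately. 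We therefore state and use the limit exactly in that combined form, over the half-ball exhaustion.
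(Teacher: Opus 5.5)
Your proposal is correct and is essentially the paper's argument: you integrate the almost-divergence identity \eqref{eq:div structure} together with Theorem~\ref{thm:key boundary equality} over annular regions, use $\tau>\frac{n-2a}{a+1}$ to make all error terms integrable at infinity, and conclude from the finiteness of the combined bulk--boundary integral (read as an improper limit over half-balls, exactly as the paper uses it). The paper phrases this as a Cauchy criterion in two radii $R^*,R\to\infty$ instead of fixing $R_0$, which is equivalent; your explicit handling of $dv_g$ versus $dx$ only makes precise what the paper absorbs into its error terms, and the normalization of $\Phi$ needs one fix that is cosmetic (the factor is $2$, not $2/c(n,a)$, if $\Phi$ omits $c(n,a)$).
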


\begin{proof}
Denote
\[
A_{R^{*},R}^{n-1}=\{(x^1,\cdots,x^n)\mid R^{*}\le |x|\le R,\ x^{n}=0\},
\qquad
A_{R^{*},R}^{+}=\{(x^1,\cdots,x^n)\mid R^{*}\le |x|\le R,\ x^{n}>0\}.
\]

Since $\int_{M}L_{a}dv_{g}+2a\int_{\partial M}\mathscr{B}^{a-1}d\sigma_{g}$ is finite, by \eqref{eq:div structure} and Theorem~\ref{thm:key boundary equality} we have
\begin{align*}
0 =&\lim_{R\to\infty,\, R^{*}\to\infty}\left(\frac{1}{2}\int_{A_{R^{*},R}^{_{+}}}L_{a}dv_{g}+a\int_{A_{R^{*},R}^{n-1}}\mathscr{B}^{a-1}d\sigma_{g}\right)\\
= & \,\int_{S_{R}^{+}}g_{jk,l}P_a^{ijkl}\nu_{i}\,d\sigma+\int_{S_{R}^{n-2}}g_{n\beta}P_a^{n\beta n\alpha}\mu_{\alpha}\,dl+(a-1)\int_{S_{R}^{n-2}}Q_{a,a-2}^{\eta\beta\alpha\gamma}g_{\beta\alpha,\gamma}\mu^{\eta}\,dl\\
 & -\bigg(\int_{S_{R^{*}}^{+}}g_{jk,l}P_a^{ijkl}\nu_{i}\,d\sigma+\int_{S_{R^{*}}^{n-2}}g_{n\beta}P_a^{n\beta n\alpha}\mu_{\alpha}\,dl+(a-1)\int_{S_{R^{*}}^{n-2}}Q_{a,a-2}^{\eta\beta\alpha\gamma}g_{\beta\alpha,\gamma}\mu^{\eta}\,dl\bigg)\\
 & +\int_{R^{*}}^{R}O\bigl(r^{-(a+1)\tau-2a+1}\bigr)r^{n-2}dr+\int_{R^{*}}^{R}O\bigl(r^{-\tau(a+1)-2a}\bigr)r^{n-1}dr.
\end{align*}
This shows that the limit defining $\mathfrak{m}_{a,B}(g)$ exists.
\end{proof}
Note that the Gauss--Codazzi equation implies
\begin{equation}\label{eq:Guass-Codazzi}
\overline{R}_{\alpha\beta\gamma}{}^\delta=R_{\alpha\beta\gamma}{}^\delta-(L_{\alpha}^\delta L_{\beta\gamma}-L_{\alpha\gamma}L_{\beta}^\delta),
\end{equation}
where $\overline{R}_{\alpha\beta\gamma\delta}$ is the curvature tensor of the induced metric $\bar{g}$ on $\partial M$.
For asymptotically flat half-manifolds with non-compact boundary, this leads to an alternative boundary mass definition:
\begin{align*}
\overline{\mathfrak{m}}_{a,B}(g):= & \,\lim_{R\to\infty}c(n,a)\bigg(\int_{S_{R}^{+}}g_{jk,l}P_a^{ijkl}\nu_{i}\,d\sigma+\int_{S_{R}^{n-2}}g_{n\beta}\overline{P}_{a}^{n\beta n\alpha}\mu_{\alpha}\,dl\\
 & \quad\quad\quad\quad+(a-1)\int_{S_{R}^{n-2}}\overline{Q}_{a,a-2}^{\eta\beta\alpha\gamma}g_{\beta\alpha,\gamma}\mu_{\eta}\,dl\bigg),
\end{align*}
where
\begin{align*}
\overline{Q}_{a,a-2}^{\eta\beta\alpha\gamma} :=\frac{1}{2^{a-2}}\sum_{i,j=1}^{n-1}\delta_{jj_{1}\cdots j_{2a-5}j_{2a-4}j_{2a-3}j_{2a-2}}^{ii_{1}\cdots i_{2a-5}i_{2a-4}\eta\beta}L_{i}^{j}\overline{R}_{i_{1}i_{2}}{}^{j_{1}j_{2}}\cdots\overline{R}_{i_{2a-5}i_{2a-4}}{}^{j_{2a-5}j_{2a-4}}g^{j_{2a-3}\alpha}g^{j_{2a-2}\gamma},
\end{align*}
and
\[
\overline{P}_{a}^{n\beta\alpha n}:=\frac{1}{2^{a}}\sum_{\substack{i_1,\cdots,i_{2a-2}\\j_1,\cdots, j_{2a-2}=1}}^{n-1}\sum_{j_{2a-1}, j_{2a}=1}^n\delta_{j_{1}j_{2}\cdots j_{2a-3}j_{2a-2}j_{2a-1}j_{2a}}^{i_{1}i_{2}\cdots i_{2a-3}i_{2a-2}n\beta}\overline{R}_{i_{1}i_{2}}{}^{j_{1}j_{2}}\cdots\overline{R}_{i_{2a-3}i_{2a-2}}{}^{j_{2a-3}j_{2a-2}}g^{j_{2a-1}\alpha}g^{j_{2a}n}.
\]
By \eqref{eq:Guass-Codazzi} and \eqref{eq:asymptotic behavior}, we obtain
\[
\int_{S_{R}^{n-2}}g_{n\beta}P_a^{n\beta n\alpha}\mu_{\alpha}\,dl=\int_{S_{R}^{n-2}}g_{n\beta}\overline{P}_{a}^{n\beta n\alpha}\mu_{\alpha}\,dl+O\bigl(R^{-(a+1)\tau-2a+n}\bigr),
\]
\[
\int_{S_{R}^{n-2}}Q_{a,a-2}^{\eta\beta\alpha\gamma}g_{\beta\alpha,\gamma}\mu_{\eta}\,dl=\int_{S_{R}^{n-2}}\overline{Q}_{a,a-2}^{\eta\beta\alpha\gamma}\overline{g}_{\beta\alpha,\gamma}\mu_{\eta}\,dl+O\bigl(R^{-(a+1)\tau-2a+n}\bigr).
\]
It follows that $\overline{\mathfrak{m}}_{a,B}(g)=\mathfrak{m}_{a,B}(g)$.

We now prove that the mass $\mathfrak{m}_{a,B}(g)$ is independent of the choice of asymptotically flat coordinates. The strategy follows \cite{Bartnik, ABL} and \cite{GWW1}. 
\begin{thm}
Suppose that $(M^{n},g)$ is an asymptotically flat half-manifold of decay order $\tau>\frac{n-2a}{a+1}$ and that
$\int_{M}L_{a}dv_{g}+2a\int_{\partial M}\mathscr{B}^{a-1}d\sigma_{g}$ is finite. Then $\mathfrak{m}_{a,B}(g)$ depends only on $g$.
\end{thm}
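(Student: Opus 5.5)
We follow the Bartnik/ABL strategy adapted in \cite{GWW1}. Given two asymptotically flat coordinate systems $\{x\}$ and $\{y\}$ at infinity on the half-manifold, both mapping $\partial M$ near infinity to $\partial\mathbb{R}^n_+$, Bartnik's argument (in the boundary version of \cite{ABL}) first shows that they differ by a rigid motion up to lower order. Precisely, $y=Ox+z(x)$ with $O$ an orthogonal matrix preserving $\mathbb{R}^n_+$ (so $O$ fixes $e_n$ and is a rotation of $\mathbb{R}^{n-1}$), and $z=O(r^{1-\tau})$, $\partial z=O(r^{-\tau})$, $\partial^2z=O(r^{-\tau-1})$. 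We also have $z^n=0$ on $\partial\mathbb{R}^n_+$ because both charts send $\partial M$ to $\{x^n=0\}$. The mass is a limit of flux integrals built from Euclidean tensor contractions, and it is invariant under the rotation $O$. We may therefore assume $O=\mathrm{Id}$ and reduce to a perturbation $y=x+z$.

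Next we compare the two expressions. Set $\tilde g=\Phi^*g$; then $\tilde g_{ij}-g_{ij}=z^i_{,j}+z^j_{,i}+O(r^{-2\tau})$, and all curvature quantities transform tensorially. The first step is to replace the spheres $\{|y|=R\}$ by $\{|x|=R\}$. For this we use the divergence identity \eqref{eq:volume and surface interg} on the thin region between them. That region has volume $O(R^{n-\tau})$ and boundary area $O(R^{n-1-\tau})$, while the integrands decay like $r^{-\tau(a+1)-2a}$ and $r^{-(a+1)\tau-2a+1}$; the hypothesis $\tau>\frac{n-2a}{a+1}$ makes these contributions vanish. It therefore suffices to compare the three flux integrands on the same hemisphere $S_R^+$ and the same $S_R^{n-2}$. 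The difference in the interior term is $\partial_l(z^j_{,k}+z^k_{,j})P_a^{ijkl}\nu_i$. Because $P_a$ is antisymmetric in $(k,l)$, the piece $z^j_{,kl}$ vanishes, leaving $z^k_{,jl}P_a^{ijkl}\nu_i$. We rewrite this as $\partial_l(z^k_{,j}P_a^{ijkl})\nu_i$ minus a term involving $\partial_lP_a^{ijkl}$. By \eqref{eq:firstderivative0} the latter is $O(r^{-(\tau+2)(a-1)-2\tau-1})$, which integrates to $o(1)$ under the decay hypothesis. The quantity $\partial_l(z^k_{,j}P_a^{ijkl})\nu_i$, with $P$ antisymmetric in $(i,l)$, is a tangential divergence on $S_R^+$. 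By Stokes it reduces to a boundary integral over $S_R^{n-2}$ of the form $\int_{S_R^{n-2}} z^k_{,j}P_a^{njk\alpha}\mu_\alpha\,dl$ (up to sign). This is the classical mechanism; what is new here is the surviving edge term.

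The main obstacle is showing that this edge term is cancelled by the changes in the two boundary integrals of \eqref{eq:definition of boundary mass}. Their differences are $\int_{S_R^{n-2}}(z^n_{,\beta}+z^\beta_{,n})P_a^{n\beta n\alpha}\mu_\alpha$ and $(a-1)\int_{S_R^{n-2}}Q^{\eta\beta\alpha\gamma}(z^\beta_{,\alpha\gamma}+z^\alpha_{,\beta\gamma})\mu_\eta$. We plan to split the edge term $z^k_{,j}P_a^{njk\alpha}$ according to whether $j,k$ are tangential or equal to $n$.
\begin{itemize}
\item Since $z^n=0$ on the boundary, $z^n_{,\beta}=0$ there.
\item The terms with $k=n$ combine with $z^\beta_{,n}P_a^{n\beta n\alpha}$, using the antisymmetries \eqref{eq:antisymm}, to give exact cancellation.
\item The fully tangential part $z^\gamma_{,\beta}P_a^{n\beta\gamma\alpha}$ is handled with the computation from the preceding Lemma. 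There $P_a^{n\beta\cdot\cdot}$ was expressed via $Q_{a,a-2}$ and the Codazzi relation $R_{\alpha\beta}{}^{n\gamma}\approx L^\gamma_{\beta,\alpha}-L^\gamma_{\alpha,\beta}$. This should turn that part into a tangential divergence on $S_R^{n-2}$, which integrates to zero, plus precisely the negative of the $Q$-term difference.
\end{itemize}
Doing this index bookkeeping, including the transformation of $L_{\alpha\beta}$ under $z$, is where most of the effort goes. If the direct algebra becomes unwieldy, we would instead redo the derivation of Theorem~\ref{thm:key boundary equality} in the $y$-coordinates. We would then compare the two instances of \eqref{eq:volume and surface interg} on a common region $B_R^+$. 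The left-hand sides, $\int L_a\,dv_g+2a\int\mathscr{B}^{a-1}$, are geometric and coordinate-free, so their values differ only by the thin-region error above. This forces the fluxes to agree in the limit, which gives the independence of $\mathfrak{m}_{a,B}(g)$ from the coordinates.
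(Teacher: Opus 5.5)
Your overall plan is the paper's. The thin region between $\{|y|=R\}$ and $\{|x|=R\}$ is the paper's $\mathrm{I}_2$. On a common hemisphere, the interior difference is pushed to an edge integral on $S_R^{n-2}$. Its normal-derivative part cancels the change in the $g_{n\beta}P_a^{n\beta n\alpha}\mu_\alpha$ term. The fully tangential part is rewritten via Codazzi and $Q_{a,a-2}$ as a divergence on $S_R^{n-2}$ minus the change in the $Q$-term; this is the paper's Claim, and your third bullet matches it.

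\textbf{The Stokes step is wrong as written.} $P_a^{ijkl}$ is antisymmetric in $(i,j)$ and in $(k,l)$ and pair-symmetric, but it is \emph{not} antisymmetric in $(i,l)$. For $a=1$ at a Euclidean point, $P_1^{1212}=\tfrac12$ while $P_1^{2211}=0$. So $\partial_l(z^k_{,j}P_a^{ijkl})\nu_i$ is not a tangential divergence on $S_R^+$.

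This breaks your second bullet. The edge term you write, $z^k_{,j}P_a^{njk\alpha}\mu_\alpha$, contains no $\partial_n z^\beta$ at all: $j=n$ gives $P_a^{nnk\alpha}=0$, and $k=n$ gives $z^n_{,\beta}=0$. It therefore cannot cancel $\int_{S_R^{n-2}}z^\beta_{,n}P_a^{n\beta n\alpha}\mu_\alpha\,dl$. That term is of order $R^{n-2a-a\tau}$ and need not tend to zero under $\tau>\frac{n-2a}{a+1}$.

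The fix is to use the symmetry of $z^k_{,jl}$ in $(j,l)$ and differentiate in the slot paired with $i$:
\[
z^k_{,jl}P_a^{ijkl}\nu_i=\partial_j\bigl(z^k_{,l}P_a^{ijkl}\nu_i\bigr)-z^k_{,l}\,\partial_jP_a^{ijkl}\,\nu_i,
\qquad
\int_{S_R^+}\partial_j\bigl(z^k_{,l}P_a^{ijkl}\nu_i\bigr)\,d\sigma=-\int_{S_R^{n-2}}P_a^{\gamma nkl}\mu_\gamma z^k_{,l}\,dl .
\]
The first identity uses $P_a^{ijkl}\partial_j\nu_i=0$, and its last term integrates to $o(1)$ by \eqref{eq:firstderivative0}. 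In the resulting edge term:
\begin{itemize}
\item the $(k,l)=(n,\beta)$ part vanishes because $z^n_{,\beta}=0$;
\item the $(k,l)=(\beta,n)$ part cancels the second boundary term, since $P_a^{\gamma n\beta n}=P_a^{n\beta n\gamma}$. So it is $l=n$, not $k=n$, that cancels.
\item What remains, $P_a^{\gamma n\beta\alpha}\mu_\gamma z^\beta_{,\alpha}$, is exactly what the Codazzi/$Q_{a,a-2}$ computation has to convert.
\end{itemize}

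\textbf{Your fallback does not work.} The identity \eqref{eq:volume and surface interg} holds only up to an $O(1)$ term. That term consists of the core contribution plus the integrated non-divergence remainders, such as $L_a-2\partial_i(g_{jk,l}P_a^{ijkl})$ and its boundary analogue, and it depends on the chart. Comparing two instances therefore only shows that the fluxes differ by some constant in the limit. Running the comparison on annuli just returns $\lim_R$ of the flux difference, which is circular. The pointwise comparison on a common sphere, showing that the difference of flux integrands is a divergence up to $o(1)$ errors, cannot be bypassed.

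\textbf{A minor point on the thin-region step.} The rates you quote are those of the remainders, not of the curvature terms. $L_a$ and $\mathscr{B}^{a-1}$ themselves are only $O(r^{-a(\tau+2)})$ and $O(r^{-a\tau-2a+1})$. With your volume and area bounds these still give $O(R^{n-2a-(a+1)\tau})=o(1)$; the paper instead uses finiteness of the integrals.
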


\begin{proof}
Suppose $\{y^{i}\}$ and $\{x^{i}\}$ are two choices of asymptotic coordinates. As in \cite{ABL}, after possibly replacing $\tau$ by $\tau-\varepsilon$ for small $\varepsilon>0$, we may write
\[
y^{i}=x^{i}+\varphi^{i},\qquad \varphi^{i}\in C_{1-\tau}^{2,\alpha}.
\footnote{Proposition~3.8 in \cite{ABL} is stated for $\tau>(n-2)/2$, but the same proof works for $\tau>\frac{n-2a}{a+1}$; we omit the details.}
\]
Denote $\partial_{i}=\frac{\partial}{\partial x^{i}}$ and $\hat{\partial}_{i}=\frac{\partial}{\partial y^{i}}$, and set $g_{ij}=g(\partial_{i},\partial_{j})$ and $\hat{g}_{ij}=g(\hat{\partial}_{i},\hat{\partial}_{j})$.

Write $r=|x|$ and $s=|y|$; then $C^{-1}r\le s\le Cr$ for some constant $C>1$.

From \cite[(3.19)--(3.20)]{ABL}, for large $r=|x|$ we have
\begin{equation}\label{eq:asy 1}
\hat{\partial}_{i}=\partial_{i}-\partial_{i}\varphi^{s}\partial_{s}+O\bigl(r^{-\tau}\bigr),
\qquad
\partial_{\alpha}\varphi^{n}=0\quad\text{on }\partial M.
\end{equation}
Moreover,
\begin{equation}\label{eq:asy2}
\hat{g}_{ij}=g_{ij}-\partial_{i}\varphi^{j}-\partial_{j}\varphi^{i}+O\bigl(r^{-2\tau}\bigr),
\end{equation}
and
\begin{equation}\label{eq:asy3}
\hat{\partial}_{k}\hat{g}_{ij}=\partial_{k}g_{ij}-\partial_{k}\partial_{i}\varphi^{j}-\partial_{k}\partial_{j}\varphi^{i}+O\bigl(r^{-2\tau-1}\bigr).
\end{equation}
Note that
$\hat{\Gamma}_{\beta\gamma}^{n}-\Gamma_{\beta\gamma}^{n}=-\partial_{\gamma\beta}\varphi^{n}+O\bigl(r^{-2\tau-1}\bigr)=O\bigl(r^{-2\tau-1}\bigr)$,
and hence
\begin{equation}\label{eq:the second form}
\hat{L}_{\beta\gamma}-L_{\beta\gamma}=O\bigl(r^{-2\tau-1}\bigr).
\end{equation}
 We know that
\begin{equation}\label{eq:Rm difference}
\hat{R}_{ij}^{\,\,\,\,kl}=R_{ij}^{\,\,\,\,kl}+O\bigl(r^{-2-2\tau}\bigr).
\end{equation}
This can be seen as in \cite[p.~96]{GWW1}: indeed,
\begin{align*}
R_{i j \,l}^{\,\,\,\,k}-\hat R_{i j \,l}^{\,\,\,\,k}
= & -\frac{1}{2}\bigg(\partial_i \partial_k \partial_j \varphi^l+\partial_i \partial_k \partial_l \varphi^j-\partial_i \partial_l \partial_j \varphi^k-\partial_i \partial_l \partial_k \varphi^j-\partial_j \partial_k \partial_i \varphi^l \\
& \quad \quad -\partial_j \partial_k \partial_l \varphi^i+\partial_j \partial_l \partial_i \varphi^k+\partial_j \partial_l \partial_k \varphi^i\bigg)+O\bigl(r^{-2-2\tau}\bigr)
= O\bigl(r^{-2-2\tau}\bigr).
\end{align*}

Denote
\[
S_{R}^{+}=\{x=(x^{1},\cdots,x^{n})\mid r=R,\ x^{n}>0\},
\quad
\hat{S}_{R}^{+}=\{y=(y^{1},\cdots,y^{n})\mid s=R,\ y^{n}>0\},
\]
and
\[
S_{R}^{n-2}=\{x=(x^{1},\cdots,x^{n})\mid r=R,\ x^{n}=0\},
\quad
\hat{S}_{R}^{n-2}=\{y=(y^{1},\cdots,y^{n})\mid s=R,\ y^{n}=0\}.
\]
\medskip
Let
\begin{align*}
I_{R}= & \,\int_{S_{R}^{+}}g_{jk,l}P_a^{ijkl}\nu_{i}\,d\sigma+\int_{S_{R}^{n-2}}g_{n\beta}P_a^{n\beta n\alpha}\mu_{\alpha}\,dl+(a-1)\int_{S_{R}^{n-2}}Q_{a,a-2}^{\eta\beta\alpha\gamma}g_{\beta\alpha,\gamma}\mu_{\eta}\,dl\\
 & -\int_{\hat{S}_{R}^{+}}\hat{g}_{jk,l}\hat{P}_{a}^{ijkl}\hat{\nu}_{i}\,d\sigma-\int_{\hat{S}_{R}^{n-2}}\hat{g}_{n\beta}\hat{P}_{a}^{n\beta n\alpha}\hat{\mu}_{\alpha}\,dl-(a-1)\int_{\hat{S}_{R}^{n-2}}\hat{Q}_{a,a-2}^{\eta\beta\alpha\gamma}\hat{g}_{\beta\alpha,\gamma}\hat{\mu}_{\eta}\,dl\\
= & \,\mathrm{I}_{1}+\mathrm{I}_{2},
\end{align*}
where
\begin{align*}
\mathrm{I}_{1}= & \,\int_{S_{R}^{+}}g_{jk,l}P_a^{ijkl}\nu_{i}\,d\sigma+\int_{S_{R}^{n-2}}g_{n\beta}P_a^{n\beta n\alpha}\mu_{\alpha}\,dl+(a-1)\int_{S_{R}^{n-2}}Q_{a,a-2}^{\eta\beta\alpha\gamma}g_{\beta\alpha,\gamma}\mu_{\eta}\,dl\\
 & -\int_{S_{R}^{+}}\hat{g}_{jk,l}\hat{P}_{a}^{ijkl}\nu_{i}\,d\sigma-\int_{S_{R}^{n-2}}\hat{g}_{n\beta}\hat{P}_{a}^{n\beta n\alpha}\mu_{\alpha}\,dl-(a-1)\int_{S_{R}^{n-2}}\hat{Q}_{a,a-2}^{\eta\beta\alpha\gamma}\hat{g}_{\beta\alpha,\gamma}\mu_{\eta}\,dl,
\end{align*}
and
\begin{align*}
\mathrm{I}_{2}= & \,\int_{S_{R}^{+}}\hat{g}_{jk,l}\hat{P}_{a}^{ijkl}\nu_{i}\,d\sigma+\int_{S_{R}^{n-2}}\hat{g}_{n\beta}\hat{P}_{a}^{n\beta n\alpha}\mu_{\alpha}\,dl+(a-1)\int_{S_{R}^{n-2}}\hat{Q}_{a,a-2}^{\eta\beta\alpha\gamma}\hat{g}_{\beta\alpha,\gamma}\mu_{\eta}\,dl\\
 & -\int_{\hat{S}_{R}^{+}}\hat{g}_{jk,l}\hat{P}_{a}^{ijkl}\hat{\nu}_{i}\,d\sigma-\int_{\hat{S}_{R}^{n-2}}\hat{g}_{n\beta}\hat{P}_{a}^{n\beta n\alpha}\hat{\mu}_{\alpha}\,dl-(a-1)\int_{\hat{S}_{R}^{n-2}}\hat{Q}_{a,a-2}^{\eta\beta\alpha\gamma}\hat{g}_{\beta\alpha,\gamma}\hat{\mu}_{\eta}\,dl.
\end{align*}
Once we prove that $I_R\to 0$ as $R\to\infty$, the theorem follows.

Denote by $A_{R}$ the region bounded by $S_{R}^{+}$, $\hat{S}_{R}^{+}$ and $\{x^{n}=0\}$, and by $A'_{R}$ the region in $\{x^{n}=0\}$ bounded by $S_{R}^{n-2}$ and $\hat{S}_{R}^{n-2}$.
Then
\begin{align*}
\mathrm{I}_{2}
& =\int_{A_{R}}\hat{\partial}_{i}(\hat{g}_{jk,l}\hat{P}_{a}^{ijkl})\,dx+a\int_{A'_{R}}\mathscr{B}^{a-1}d\sigma+o(1)\\
& =\frac{1}{2}\int_{A_{R}}L_{a}dv_{g}+a\int_{A'_{R}}\mathscr{B}^{a-1}d\sigma_{g}+o(1)
\to 0\quad\text{as }R\to\infty.
\end{align*}
It remains to show that $\mathrm{I}_{1}\to 0$ as $R\to\infty$.  
Write
\[
\mathrm{I}_{1}=\mathrm{I}_{11}+\mathrm{I}_{12}+\mathrm{I}_{13},
\]
where
\[
\mathrm{I}_{11}=\int_{S_{R}^{+}}g_{jk,l}P_a^{ijkl}\nu_{i}\,d\sigma-\int_{S_{R}^{+}}\hat{g}_{jk,l}\hat{P}_{a}^{ijkl}\nu_{i}\,d\sigma,
\]
\[
\mathrm{I}_{12}=\int_{S_{R}^{n-2}}g_{n\beta}P_a^{n\beta n\alpha}\mu_{\alpha}\,dl-\int_{S_{R}^{n-2}}\hat{g}_{n\beta}\hat{P}_{a}^{n\beta n\alpha}\mu_{\alpha}\,dl,
\]
and
\[
\mathrm{I}_{13}=(a-1)\int_{S_{R}^{n-2}}Q_{a,a-2}^{\eta\beta\alpha\gamma}g_{\beta\alpha,\gamma}\mu_{\eta}\,dl-(a-1)\int_{S_{R}^{n-2}}\hat{Q}_{a,a-2}^{\eta\beta\alpha\gamma}\hat{g}_{\beta\alpha,\gamma}\mu_{\eta}\,dl.
\]

We rewrite
\[
\mathrm{I}_{11}=\int_{S_{R}^{+}}(g_{jk,l}-\hat{g}_{jk,l})P_a^{ijkl}\nu_{i}\,d\sigma+\int_{S_{R}^{+}}(P_a^{ijkl}-\hat{P}_{a}^{ijkl})\hat{g}_{jk,l}\nu_{i}\,d\sigma.
\]
By \eqref{eq:Rm difference}, \eqref{behavior of Rm}, and \eqref{eq:asymptotic behavior},
\[
\int_{S_{R}^{+}}(P_a^{ijkl}-\hat{P}_{a}^{ijkl})\hat{g}_{jk,l}\nu_{i}\,d\sigma\to 0
\quad\text{as }R\to\infty.
\]
When $\nu$ is on $S_{R}^{n-2}$, $\nu=\frac{x^{i}}{|x|}\frac{\partial}{\partial x^{i}}=\frac{x^{\alpha}}{|x'|}\frac{\partial}{\partial x^{\alpha}}=\mu$
and without loss of generality, we denote $\nu$ as $\frac{\partial}{\partial x_{n-1}}$
on $S_{R}^{n-2}$. 

By an argument similar to Ge--Wang--Wu \cite{GWW1} (see the identity for $I$ on p.~95 of \cite{GWW1}), using \eqref{eq:asy3}, \eqref{eq:antisymm}, and $|\partial\varphi|\le Cr^{-\tau}$, we obtain
\begin{align*}
\int_{S_{R}^{+}}(g_{jk,l}-\hat{g}_{jk,l})P_a^{ijkl}\nu_{i}\,d\sigma
= & \,\int_{S_{R}^{+}}\partial_{j}\bigl(P_a^{ijkl}\partial_{l}\varphi^{k}\nu_{i}\bigr)\,d\sigma+o(1)\\
= & -\int_{S_{R}^{n-2}}P_a^{n-1nkl}\partial_{l}\varphi^{k}\,dl+o(1),
\end{align*}
where we used $P_a^{ijkl}\partial_l\partial_k\varphi^j=0$ and
$P_a^{ijkl}\partial_j(\nu_i)=P_a^{ijkl}\bigl(\frac{\delta_{ij}}{|x|}-\frac{x_ix_j}{|x|^3}\bigr)=0$ in the first equality.

Next,
\begin{align*}
P_a^{n-1nkl}\partial_{l}\varphi^{k}
= & \,P_a^{n-1nkn}\partial_{n}\varphi^{k}+P_a^{n-1nk\alpha}\partial_{\alpha}\varphi^{k}\\
= & \,P_a^{n-1n\alpha n}\partial_{n}\varphi^{\alpha}+P_a^{n-1n\beta\alpha}\partial_{\alpha}\varphi^{\beta}+P_a^{n-1nn\alpha}\partial_{\alpha}\varphi^{n}.
\end{align*}
Therefore, since $\partial_\alpha \varphi^n=0$ on $\partial M$, we have
\begin{equation}\label{eq:I11}
\mathrm{I}_{11}=-\int_{S_{R}^{n-2}}P_a^{n-1n\alpha n}\partial_{n}\varphi^{\alpha}\,dl-\int_{S_{R}^{n-2}}P_a^{n-1n\beta\alpha}\partial_{\alpha}\varphi^{\beta}\,dl+o(1).
\end{equation}
Now from \eqref{eq:Rm difference} and the definition of $P_a$, together with $R_{ijkl}=O\bigl(R^{-\tau-2}\bigr)$, we obtain
\[
\hat{P}_{a}^{n\beta n\alpha}=P_a^{n\beta n\alpha}+O\bigl(R^{-(\tau+2)(a-2)-2-2\tau}\bigr).
\]
Together with \eqref{eq:asy2}, this yields
\begin{align*}
 & g_{n\beta}P_a^{n\beta n\alpha}\mu_{\alpha}-\hat{g}_{n\beta}\hat{P}_{a}^{n\beta n\alpha}\mu_{\alpha}\\
= & P_a^{n\beta n\alpha}\mu_{\alpha}(\partial_{n}\varphi^{\beta}+\partial_{\beta}\varphi^{n})+O\bigl(R^{-2\tau-(2+\tau)(a-1)}\bigr),
\end{align*}
and hence
\begin{align}\label{eq:I12}
\mathrm{I}_{12}
=\int_{S_{R}^{n-2}}P_a^{n\beta nn-1}\partial_{n}\varphi^{\beta}\,dl+O\bigl(R^{-2\tau-(2+\tau)(a-1)+n-2}\bigr),
\end{align}
where $O\bigl(R^{-2\tau-(2+\tau)(a-1)+n-2}\bigr)\to 0$ as $R\to\infty$.

We next estimate $\mathrm{I}_{13}$. From \eqref{eq:Rm difference} and \eqref{eq:the second form}, we have
\begin{align}\label{eq:I13}
 & Q_{a,a-2}^{\eta\beta\alpha\gamma}g_{\beta\alpha,\gamma}-\hat{Q}_{a,a-2}^{\eta\beta\alpha\gamma}\hat{g}_{\beta\alpha,\gamma}\\
= & \,\frac{1}{2^{a-2}}\sum_{i,j=1}^{n-1}\delta_{j_{2}j_{3}\cdots j_{2a-3}j_{2a-2}\alpha\gamma}^{i_{2}i_{3}\cdots i_{2a-3}i_{2a-2}\eta\beta}L_{i_{2}}^{j_{2}}R_{i_{3}i_{4}}{}^{j_{3}j_{4}}\cdots R_{i_{2a-3}i_{2a-2}}{}^{j_{2a-3}j_{2a-2}}g_{\beta\alpha,\gamma}\nonumber \\
 & -\frac{1}{2^{a-2}}\sum_{i,j=1}^{n-1}\delta_{j_{2}j_{3}\cdots j_{2a-3}j_{2a-2}\alpha\gamma}^{i_{2}i_{3}\cdots i_{2a-3}i_{2a-2}\eta\beta}\hat{L}_{i_{2}}^{j_{2}}R_{i_{3}i_{4}}{}^{j_{3}j_{4}}\cdots R_{i_{2a-3}i_{2a-2}}{}^{j_{2a-3}j_{2a-2}}\hat{g}_{\beta\alpha,\gamma}\nonumber \\
 & +O\bigl(R^{-(2+\tau)(a-3)-2(\tau+1)-2-2\tau}\bigr)\nonumber \\
= & \,\frac{1}{2^{a-2}}\sum_{i,j=1}^{n-1}\delta_{j_{2}j_{3}\cdots j_{2a-3}j_{2a-2}\alpha\gamma}^{i_{2}i_{3}\cdots i_{2a-3}i_{2a-2}\eta\beta}L_{i_{2}}^{j_{2}}R_{i_{3}i_{4}}{}^{j_{3}j_{4}}\cdots R_{i_{2a-3}i_{2a-2}}{}^{j_{2a-3}j_{2a-2}}g_{\beta\alpha,\gamma}\nonumber \\
 & -\frac{1}{2^{a-2}}\sum_{i,j=1}^{n-1}\delta_{j_{2}j_{3}\cdots j_{2a-3}j_{2a-2}\alpha\gamma}^{i_{2}i_{3}\cdots i_{2a-3}i_{2a-2}\eta\beta}(L_{i_{2}}^{j_{2}}-\partial_{j_{2}}\partial_{i_{2}}\varphi^{n})R_{i_{3}i_{4}}{}^{j_{3}j_{4}}\cdots R_{i_{2a-3}i_{2a-2}}{}^{j_{2a-3}j_{2a-2}}\hat{g}_{\beta\alpha,\gamma}\nonumber \\
 & +O\bigl(R^{-(2+\tau)(a-3)-2(\tau+1)-2-2\tau}\bigr)+O\bigl(R^{-(a+1)\tau-2a+2}\bigr)\nonumber \\
= & \,\frac{1}{2^{a-2}}\sum_{i,j=1}^{n-1}\delta_{j_{2}j_{3}\cdots j_{2a-3}j_{2a-2}\alpha\gamma}^{i_{2}i_{3}\cdots i_{2a-3}i_{2a-2}\eta\beta}L_{i_{2}}^{j_{2}}R_{i_{3}i_{4}}{}^{j_{3}j_{4}}\cdots R_{i_{2a-3}i_{2a-2}}{}^{j_{2a-3}j_{2a-2}}(\partial_{\gamma}\partial_{\beta}\varphi^{\alpha}+\partial_{\gamma}\partial_{\alpha}\varphi^{\beta})\nonumber \\
 & +O\bigl(R^{-(2+\tau)(a-3)-2(\tau+1)-2-2\tau}\bigr)+O\bigl(R^{-(a+1)\tau-2a+2}\bigr).\nonumber
\end{align}
Here the last equality follows from \eqref{eq:asy3}.

Collecting \eqref{eq:I11}, \eqref{eq:I12}, and \eqref{eq:I13}, we arrive at
\begin{align}
\mathrm{I}_{1}= & -\int_{S_{R}^{n-2}}P_a^{n-1n\beta\alpha}\partial_{\alpha}\varphi^{\beta}dl\label{eq:I1 final form}\\
 & +\frac{a-1}{2^{a-2}}\sum_{i,j=1}^{n-1}\int_{S_{R}^{n-2}}\delta_{j_{2}j_{3}\cdots j_{2a-3}j_{2a-2}\alpha\gamma}^{i_{2}i_{3}\cdots i_{2a-3}i_{2a-2}\eta\beta}L_{i_{2}}^{j_{2}}\cdots R_{i_{2a-3}i_{2a-2}}{}^{j_{2a-3}j_{2a-2}}(\partial_{\gamma}\partial_{\beta}\varphi^{\alpha}+\partial_{\gamma}\partial_{\alpha}\varphi^{\beta})\nonumber \\
= & -\int_{S_{R}^{n-2}}P_a^{\gamma n\beta\alpha}\partial_{\alpha}\varphi^{\beta}\mu_{\gamma}dl+(a-1)\int_{S_{R}^{n-2}} Q_{a,a-2}^{\eta \beta \alpha \gamma}\partial_{\beta }\partial_{\gamma}\varphi^{\alpha}\mu_\eta dl \nonumber 
 %& +\frac{a-1}{2^{a-2}}\sum_{i,j=1}^{n-1}\int_{S_{R}^{n-2}}\delta_{j_{2}j_{3}\cdots j_{2a-3}j_{2a-2}\alpha\gamma}^{i_{2}i_{3}\cdots i_{2a-3}i_{2a-2}\eta\beta}L_{i_{2}}^{j_{2}}\cdots R_{i_{2a-3}i_{2a-2}}{}^{j_{2a-3}j_{2a-2}}\partial_{\gamma}\partial_{\beta}\varphi^{\alpha}\mu_{\eta},\nonumber 
\end{align}
where the last equality holds due to the symmetry and anti-symmetry of $\alpha$ and $\gamma$ such that 
\[
\sum_{i,j=1}^{n-1}\delta_{j_{2}j_{3}\cdots j_{2a-3}j_{2a-2}\alpha\gamma}^{i_{2}i_{3}\cdots i_{2a-3}i_{2a-2}\eta\beta}L_{i_{2}}^{j_{2}}R_{i_{3}i_{4}}{}^{j_{3}j_{4}}\cdots R_{i_{2a-3}i_{2a-2}}{}^{j_{2a-3}j_{2a-2}}\partial_{\gamma}\partial_{\alpha}\varphi^{\beta}=0.
\]

\textbf{Claim.}
\begin{equation*}
-P_a^{\gamma n\beta\alpha}\partial_{\alpha}\varphi^{\beta}\mu_{\gamma}
=(a-1)\partial_{\gamma}\bigl(Q_{a,a-2}^{\eta\gamma\beta\alpha}\mu_{\eta}\partial_{\alpha}\varphi^{\beta}\bigr)
-(a-1)Q_{a,a-2}^{\eta\gamma\beta\alpha}\mu_{\eta}\partial_{\gamma}\partial_{\alpha}\varphi^{\beta}
+O\bigl(R^{-\tau(a+1)-2a+2}\bigr).
\end{equation*}
With this claim, the proof follows from \eqref{eq:I1 final form}.

We now prove the claim. Define, for $1\le p\le 2a-2$,
\[
\mathrm{J}_{p}:=\frac{1}{2^{a}}\sum_{j=1}^{n-1}\sum_{i=1}^{n-2}\delta_{j_{1}j_{2}\cdots n\cdots j_{2a-3}j_{2a-2}\beta\alpha}^{i_{1}i_{2}\cdots i_{p}\cdots i_{2a-3}i_{2a-2}n-1n}R_{i_{1}i_{2}}{}^{j_{1}j_{2}}\cdots R_{i_{p}i_{p+1}}^{\quad\quad nj_{p+1}}\cdots R_{i_{2a-3}i_{2a-2}}{}^{j_{2a-3}j_{2a-2}}.
\]
By symmetry, $\mathrm{J}_{p}=\mathrm{J}_{1}$ for all $p$.

We compute
\begin{align}\label{Claim boundary1}
P_a^{\gamma n\beta \alpha}\mu_{\gamma}
=&\,P_a^{n-1n\beta\alpha}\no\\
= & \,\mathrm{J}_{1}+\cdots+\mathrm{J}_{2a-2}+O\bigl(R^{-(\tau+2)(a-1)-\tau}\bigr)\no\\
=&\,(2a-2)\mathrm{J}_{1}+O\bigl(R^{-(\tau+2)(a-1)-\tau}\bigr),
\end{align}
where we recall
\[
P_a^{n-1n\beta\alpha}=\frac{1}{2^{a}}\sum_{i,j=1}^{n}\delta_{j_{1}j_{2}\cdots j_{2a-3}j_{2a-2}j_{2a-1}j_{2a}}^{i_{1}i_{2}\cdots i_{2a-3}i_{2a-2}n-1n}R_{i_{1}i_{2}}{}^{j_{1}j_{2}}\cdots R_{i_{2a-3}i_{2a-2}}{}^{j_{2a-3}j_{2a-2}}g^{j_{2a-1}\beta}g^{j_{2a}\alpha}.
\]
Since
\[
\partial_{i_{1}}R_{i_{3}i_{4}}{}^{j_{3}j_{4}}=\frac{1}{2}\frac{\partial}{\partial x^{i_{1}}}\bigg(\frac{\partial^{2}g_{i_{4}j_{3}}}{\partial x^{i_{3}}\partial x^{j_{4}}}+\frac{\partial^{2}g_{i_{3}j_{4}}}{\partial x^{i_{4}}\partial x^{j_{3}}}-\frac{\partial^{2}g_{i_{3}j_{3}}}{\partial x^{i_{4}}\partial x^{j_{4}}}-\frac{\partial^{2}g_{i_{4}j_{4}}}{\partial x^{i_{3}}\partial x^{j_{3}}}\bigg)+O\bigl(R^{-2\tau-3}\bigr),
\]
by symmetry and antisymmetry of indices (e.g., in $i_1,i_3$ and $i_1,i_4$) we obtain
\begin{equation}\label{vanishing term}
\sum_{i,j=1}^{n-1}\delta_{j_{2}\cdots j_{2a-2}\beta\alpha}^{i_{2}\cdots i_{2a-2}n-1i_{1}}L_{i_{2}}^{j_{2}}\partial_{i_{1}}\bigg(R_{i_{3}i_{4}}{}^{j_{3}j_{4}}\cdots R_{i_{2a-3}i_{2a-2}}{}^{j_{2a-3}j_{2a-2}}\bigg)\partial_{\alpha}\varphi^{\beta}=O\bigl(R^{-\tau(a+1)-2a+2}\bigr).
\end{equation}
Moreover,
\begin{align*}
-2^{a}\mathrm{J}_{1}\partial_{\alpha}\varphi^{\beta}
=&\sum_{i,j=1}^{n-1} -\delta_{nj_{2}\cdots j_{2a-3}j_{2a-2}\beta\alpha}^{i_{1}i_{2}\cdots i_{2a-3}i_{2a-2}n-1n}R_{i_{1}i_{2}}{}^{nj_{2}}R_{i_{3}i_{4}}{}^{j_{3}j_{4}}\cdots R_{i_{2a-3}i_{2a-2}}{}^{j_{2a-3}j_{2a-2}}\partial_{\alpha}\varphi^{\beta}\\
= &\sum_{i,j=1}^{n-1} -\delta_{nj_{2}\cdots j_{2a-3}j_{2a-2}\beta\alpha}^{i_{1}i_{2}\cdots i_{2a-3}i_{2a-2}n-1n}(L_{i_{2},i_{1}}^{j_{2}}-L_{i_{1},i_{2}}^{j_{2}})\cdots R_{i_{2a-3}i_{2a-2}}{}^{j_{2a-3}j_{2a-2}}\partial_{\alpha}\varphi^{\beta}\\
 & +O\bigl(R^{-(a+1)\tau-2a+2}\bigr)\\
= & \,\sum_{i,j=1}^{n-1}2\delta_{j_{2}\cdots j_{2a-3}j_{2a-2}\beta\alpha}^{i_{2}\cdots i_{2a-3}i_{2a-2}n-1i_{1}}L_{i_{2},i_{1}}^{j_{2}}\cdots R_{i_{2a-3}i_{2a-2}}{}^{j_{2a-3}j_{2a-2}}\partial_{\alpha}\varphi^{\beta}\\
 & +O\bigl(R^{-(a+1)\tau-2a+2}\bigr)\\
= & \,\sum_{i,j=1}^{n-1}2\delta_{j_{2}\cdots j_{2a-3}j_{2a-2}\beta\alpha}^{i_{2}\cdots i_{2a-3}i_{2a-2}n-1i_{1}}\partial_{i_{1}}\bigg(L_{i_{2}}^{j_{2}}R_{i_{3}i_{4}}{}^{j_{3}j_{4}}\cdots R_{i_{2a-3}i_{2a-2}}{}^{j_{2a-3}j_{2a-2}}\partial_{\alpha}\varphi^{\beta}\bigg)\\
 & -2\delta_{j_{2}\cdots j_{2a-3}j_{2a-2}\beta\alpha}^{i_{2}\cdots i_{2a-3}i_{2a-2}n-1i_{1}}L_{i_{2}}^{j_{2}}\partial_{i_{1}}\bigg(R_{i_{3}i_{4}}{}^{j_{3}j_{4}}\cdots R_{i_{2a-3}i_{2a-2}}{}^{j_{2a-3}j_{2a-2}}\bigg)\partial_{\alpha}\varphi^{\beta}\\
 & -2\delta_{j_{2}\cdots j_{2a-3}j_{2a-2}\beta\alpha}^{i_{2}\cdots i_{2a-3}i_{2a-2}n-1i_{1}}L_{i_{2}}^{j_{2}}R_{i_{3}i_{4}}{}^{j_{3}j_{4}}\cdots R_{i_{2a-3}i_{2a-2}}{}^{j_{2a-3}j_{2a-2}}\partial_{i_{1}}\partial_{\alpha}\varphi^{\beta}\\
 & +O\bigl(R^{-(a+1)\tau-2a+2}\bigr)\\
=&\,2^{a-1}\partial_{i_1}\bigl(Q_{a,a-2}^{n-1 i_1\beta \alpha}\partial_{\alpha} \varphi^{\beta}\bigr)-2^{a-1}Q_{a,a-2}^{n-1 \gamma \beta \alpha }\partial_{\gamma}\partial_{\alpha} \varphi^{\beta}+O\bigl(R^{-(a+1)\tau-2a+2}\bigr),
\end{align*}
where in the last equality we used \eqref{vanishing term}.
Together with \eqref{Claim boundary1}, this completes the proof of the claim and hence of the theorem.

\iffalse
By (\ref{eq:I1 final form}), we have 

\begin{align*}
\mathrm{I}_{1}= & -\frac{a-1}{2^{a-2}}\int_{S_{R}^{n-2}}\delta_{j_{2}\cdots j_{2a-3}j_{2a-2}\beta\alpha}^{i_{2}\cdots i_{2a-3}i_{2a-2}n-1\gamma}L_{i_{2},i_{1}}^{j_{2}}R_{i_{3}i_{4}}{}^{j_{3}j_{4}}\cdots R_{i_{2a-3}i_{2a-2}}{}^{j_{2a-3}j_{2a-2}}\partial_{\gamma}\partial_{\alpha}\varphi^{\beta}\\
 & +\frac{a-1}{2^{a-2}}\sum_{i,j=1}^{n-1}\int_{S_{R}^{n-2}}\delta_{j_{2}j_{3}\cdots j_{2a-3}j_{2a-2}\alpha\gamma}^{i_{2}i_{3}\cdots i_{2a-3}i_{2a-2}\eta\beta}L_{j_{2}i_{2}}R_{i_{3}i_{4}}{}^{j_{3}j_{4}}\cdots R_{i_{2a-3}i_{2a-2}}{}^{j_{2a-3}j_{2a-2}}\partial_{\gamma}\partial_{\beta}\varphi^{\alpha}\mu_{\eta}\\
 & +O(R^{-\tau-1-\tau}R^{-(\tau+2)(a-2)-\tau-1+n-2})\\
= & o(1),\quad\text{as }R\rightarrow\infty.
\end{align*}
Above all, we have $I_{R}\rightarrow0$ as $R\rightarrow\infty$.
The proof is completed.
\fi

\end{proof}
\begin{remark}
Inspired by Wang--Wu \cite{Wang-Wu}, we provide an alternative definition of the boundary mass.

Define, for $1\le\alpha,\beta\le n-1$,
\begin{align*}
m_{I,b}^{k}
= a(n,k)\lim_{R\to\infty}\bigg(-\int_{S_{R}^{+}}\mathcal{E}^{(k)}\Bigl(X,\frac{\partial}{\partial r}\Bigr)\,d\sigma+k\int_{S_{R}^{n-2}}\mathcal T_{k-1,\beta}^{\quad\,\,\,\alpha}x_{\alpha}\mu^{\beta}\,dl\bigg),
\end{align*}
where $X=r\frac{\partial}{\partial r}=x^{i}\frac{\partial}{\partial x^{i}}$, $a(n,k)=\frac{(n-2k-1)!}{2^{k}(n-1)!\omega_{n-1}}$, $d\sigma$ is the area element of $S_{R}^{+}$ in $\mathbb{R}^{n}$, and $\mu$ is the outward unit co-normal to $S_{R}^{n-2}=\partial S_{R,+}^{n-1}$ (oriented as the boundary of the bounded region $\Sigma_{r}\subset\partial M$). Here $dl$ denotes the area element of $S_{R}^{n-2}$ in $\mathbb{R}^{n-1}$. Moreover,
\[
\mathcal T_{k-1,\beta}^{\quad\,\,\,\alpha}=\frac{1}{2^{k-1}}\delta_{\beta j_{2}\cdots j_{2k-1}j_{2k}}^{\alpha i_{2}\cdots i_{2k-1}i_{2k}}L_{i_{2}}^{j_{2}}R_{i_{3}i_{4}}^{\quad\,j_{3}j_{4}}\cdots R_{i_{2k-1}i_{2k}}{}^{j_{2k-1}j_{2k}},
\]
\[
\mathcal{E}^{(k)i}{}_{j}=-\frac{1}{2^{k+1}}\delta_{jj_{1}j_{2}\cdots j_{2k-1}j_{2k}}^{ii_{1}i_{2}\cdots i_{2k-1}i_{2k}}R_{i_{1}i_{2}}{}^{j_{1}j_{2}}\cdots R_{i_{2k-1}i_{2k}}{}^{j_{2k-1}j_{2k}}.
\]
A similar definition of the ADM mass using conformal Killing fields on Euclidean space was proposed by Schoen (1988).
\end{remark}

\section{Graphical Case}

In this section we establish a positive mass theorem for graphs in $\mathbb{R}^{n+1}$ with non-compact boundary.

\medskip
\noindent\textbf{Algebraic notation.}
For later use (here and in Section~4), let $B$ and $C$ be $m\times m$ matrices. For integers $0\le r\le q\le m$, define the mixed elementary symmetric functions and the associated Newton transformations by
\begin{align*}
\sigma_{q, r}(C, B)
&=\frac{1}{q!}\,\delta^{i_1 \cdots i_q}_{j_1 \cdots j_q}
 C_{i_1}^{j_1} \cdots C_{i_r}^{j_r} B_{i_{r+1}}^{j_{r+1}} \cdots B_{i_q}^{j_q}, \\
T_{q, r}(C, B)_j^i
&=\frac{1}{q!}\,\delta^{i_1 \cdots i_q i}_{j_1 \cdots j_q j}
 C_{i_1}^{j_1} \cdots C_{i_r}^{j_r} B_{i_{r+1}}^{j_{r+1}} \cdots B_{i_q}^{j_q}.
\end{align*}
In particular,
\[
T_{k-1}(C)^i_j=\frac{1}{(k-1)!}\,\delta^{i_1\cdots i_{k-1}i}_{j_1\cdots j_{k-1}j}
C^{j_1}_{i_1}\cdots C^{j_{k-1}}_{i_{k-1}}.
\]

\medskip
\noindent\textbf{Geometric setting.}
Let $M=(x,f(x))\subset\mathbb{R}^{n+1}$ be the graph of a function $f$, equipped with the induced metric $g_{ij}=\delta_{ij}+f_i f_j$.
We write $Df=(\partial_{1}f,\dots,\partial_{n}f)$ and set $w=\sqrt{1+|Df|^{2}}$. Let $S$ denote the shape operator of $M$. Then
\begin{align}\label{expression of S in function}
S_{j}^{i}=g^{ik}S_{kj}=\Bigl(\frac{f_{i}}{w}\Bigr)_{j},
\end{align}
and the Gauss equation gives
\begin{equation}\label{RandL}
R_{ij}^{\quad kl}=S_{i}^{k}S_{j}^{l}-S_{i}^{l}S_{j}^{k},
\end{equation}
where $R_{ijkl}$ is the Riemannian curvature tensor of $(M,g)$.

By Reilly \cite{Reilly}, for even $k$ one has $\sigma_{k}(S)=\frac{1}{k!}L_{k/2}$. Moreover,
\begin{align}
\sigma_{k}(S)
& =\frac{1}{k!}\,\delta\!\left(\begin{array}{ccc}
i_{1} & \cdots & i_{k}\\
j_{1} & \cdots & j_{k}
\end{array}\right)S_{i_{1}}^{j_{1}}\cdots S_{i_{k}}^{j_{k}}\no\\
&=\frac{1}{k}(T_{k-1}(S))_{j}^{i}\Bigl(\frac{f_{j}}{w}\Bigr)_{i}\no\\
& =\frac{1}{k}\partial_{i}\Bigl(T_{k-1}(S){}_{j}^{i}\frac{f_{j}}{w}\Bigr).
\label{eq:div-structure of k-curvature}
\end{align}
In particular, $\sigma_{2}(S)=\frac{1}{2}R_g$, where $R_g$ is the scalar curvature of $(M,g)$.
Along $\partial\mathbb{R}^{n}_{+}$ we write $(S^T)_{\alpha}^{\beta}=S_{\alpha}^{\beta}$ for $\alpha,\beta\in\{1,\dots,n-1\}$, and we set $\Omega'=\overline{\Omega}\cap\partial\overline{\mathbb{R}_{+}^{n}}$.
Integrating \eqref{eq:div-structure of k-curvature} over $\mathbb{R}_{+}^{n}\setminus\Omega$ yields the following identity.
\begin{thm}\label{graph formula of mass}
Let $f:\mathbb{R}_{+}^{n}\setminus\Omega\to\mathbb{R}$ be an asymptotically flat function on $\mathbb{R}_{+}^{n}\setminus\Omega$ of class $C^{2}$ up to the boundary, satisfying Definition~\ref{def: assumption of f-1}.
Let $\left(\mathbb{R}_{+}^{n}\setminus\Omega,g\right)$ be the graph of $f$.
Assume that $f$ is constant on $\partial\Omega$, and let $\nu=\frac{Df}{|Df|}$ be the outward unit normal along $\partial\Omega$ (pointing out of $\Omega$).
Then
\begin{align}
 & \lim_{R\to\infty}\frac{1}{k}\int_{\partial B_{R}^{+}}(T_{k-1}(S))_{j}^{i}\frac{f_{j}}{w}\frac{x^{i}}{|x|}\,d\sigma+\frac{1}{k}\int_{\partial B_{R}^{n-1}}T_{k-2}(S^T)_{\alpha}^{i_{1}}\frac{f_{n}}{w}\frac{f_{\alpha}}{w}\frac{x^{i_{1}}}{|x|}\,dl\no\\
= & \,\int_{\mathbb{R}_{+}^{n}\setminus\Omega}\frac{1}{k!}L_{k/2}\,dx+\frac{1}{k}\int_{\partial\Omega\cap\mathbb{R}_{+}^{n}}\sigma_{k-1}(\kappa_{\partial\Omega})\left(\frac{|Df|}{w}\right)^{k}\,d\sigma+\int_{\partial\mathbb{R}_{+}^{n}\setminus\Omega}\sigma_{k-1}(S^T)\frac{f_{n}}{w}\,dx\no\\
 & +\frac{1}{k}\int_{\partial\Omega'}\sigma_{k-2}(\kappa_{\partial\Omega'})\left(\frac{|Df|}{w}\right)^{k}(\sin\theta)^{k-1}\cos\theta\,dl,\label{graph mass1}
\end{align}
where $\theta$ is the angle formed by the tangent plane of $\partial\Omega$ at $\partial\Omega'$ and $x^{n}=0$, and $\kappa_{\partial\Omega}$ and $\kappa_{\partial\Omega'}$ are the principal curvatures of $\partial\Omega\cap\mathbb{R}_{+}^{n}$ and $\partial\Omega'$, respectively.
\end{thm}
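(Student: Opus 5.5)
Integrate the divergence identity \eqref{eq:div-structure of k-curvature} over the truncated region $D_R:=B_R^+\setminus\Omega$ and apply the divergence theorem. The boundary of $D_R$ has four pieces: the hemisphere $S_R^+$, the flat piece $B_R^{n-1}\setminus\Omega'$ with outward normal $-\partial_n$, and the interior boundary $\partial\Omega\cap\mathbb{R}^n_+$ with normal $-\nu=-Df/|Df|$ (outward from $D_R$). Hence
\[
\int_{D_R}\sigma_k(S)\,dx=\frac1k\int_{S_R^+}T_{k-1}(S)^i_j\frac{f_j}{w}\frac{x^i}{|x|}d\sigma-\frac1k\int_{B_R^{n-1}\setminus\Omega'}T_{k-1}(S)^n_j\frac{f_j}{w}dx-\frac1k\int_{\partial\Omega\cap B_R^+}T_{k-1}(S)^i_j\frac{f_j}{w}\nu_i\,d\sigma ,
\]
and $\sigma_k(S)=\frac1{k!}L_{k/2}$ for even $k$ by Reilly.

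\textbf{Interior boundary.} Since $f$ is constant on $\partial\Omega$, $Df=|Df|\nu$. Moreover, $S$ restricted to $T\partial\Omega$ equals $\frac{|Df|}{w}$ times the second fundamental form of the level set, because $(f_i/w)_j=f_{ij}/w-\dots$ and the tangential Hessian of $f$ is $|Df|\,h$. It follows that $T_{k-1}(S)^i_j\nu_i\nu_j=\sigma_{k-1}(S|_{T\partial\Omega})=(|Df|/w)^{k-1}\sigma_{k-1}(\kappa_{\partial\Omega})$. The remaining factor $f_j/w=(|Df|/w)\nu_j$ supplies the last power. I will need to track the sign convention carefully against the orientation in the statement, where $h=\langle D e,-\nu\rangle$.

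\textbf{Flat boundary.} Split $T_{k-1}(S)^n_jf_j=T_{k-1}(S)^n_nf_n+T_{k-1}(S)^n_\alpha f_\alpha$. The first term gives $\sigma_{k-1}(S^T)f_n$. For the second, expand $T_{k-1}(S)^n_\alpha=-T_{k-2}(S^T)^{\beta}_{\alpha}S^n_\beta$ up to the index bookkeeping of the generalized Kronecker delta (pull $n$ out of the upper row). Then write $S^n_\beta=\partial_\beta(f_n/w)$ and integrate by parts on $\partial\mathbb{R}^n_+$, using the tangential Newton-tensor divergence-free identity $\partial_\beta\big(T_{k-2}(S^T)^\beta_\alpha\big)=0$. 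This identity holds exactly because $S^T_{\alpha\beta}$ is the tangential Hessian of a potential: $(f_\alpha/w)_\beta$ is symmetric, so $\partial_\beta$-derivatives commute.

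This step is the main obstacle. The integration by parts leaves the error term $T_{k-2}(S^T)^\beta_\alpha(f_n/w)\partial_\beta(f_\alpha/w)=(k-1)\sigma_{k-1}(S^T)f_n/w$. Combined with the $-\frac1k$ prefactor and the first term, the total coefficient must come out to $\sigma_{k-1}(S^T)f_n/w$. The plan is to verify this constant carefully: the $T^n_n$ piece contributes $1/k$, and the parts piece should contribute $(k-1)/k$.

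\textbf{Boundary terms from the parts step.} The integration by parts produces boundary terms on $\partial B_R^{n-1}$, namely $\frac1k\int T_{k-2}(S^T)^{i_1}_\alpha\frac{f_nf_\alpha}{w^2}\frac{x^{i_1}}{|x|}dl$, which is the extra term on the left. It also produces terms on $\partial\Omega'$. At $\partial\Omega'$, $Df=|Df|\nu$, $f_n=|Df|\cos\theta$, and the tangential part of $Df$ in $\partial\mathbb{R}^n_+$ is $|Df|\sin\theta\,\mu$. The restriction of $S^T$ to $T\partial\Omega'$ is $(|Df|/w)\sin\theta$ times the second fundamental form $\kappa_{\partial\Omega'}$ of $\partial\Omega'$ in $\partial\mathbb{R}^n_+$, since the level set of $f|_{\partial\mathbb{R}^n_+}$ has gradient of size $|Df|\sin\theta$. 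This yields $T_{k-2}(S^T)_{\mu\mu}=(|Df|\sin\theta/w)^{k-2}\sigma_{k-2}(\kappa_{\partial\Omega'})$. Multiplying by $f_nf_\mu/w^2=(|Df|/w)^2\sin\theta\cos\theta$ gives exactly $(|Df|/w)^k(\sin\theta)^{k-1}\cos\theta\,\sigma_{k-2}$.

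\textbf{Passing to the limit.} Finally let $R\to\infty$. The right-hand integrals converge because of the decay in Definition~\ref{def: assumption of f-1}. Rearranging signs then gives \eqref{graph mass1}.
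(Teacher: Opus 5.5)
Your proposal is correct and follows the paper's proof essentially step for step: the divergence theorem on $B_R^+\setminus\Omega$, the splitting of the flat-boundary flux using $T_{k-1}(S)^n_\alpha=-T_{k-2}(S^T)^\beta_\alpha\,\partial_\beta(f_n/w)$, the tangential integration by parts that produces $(k-1)\sigma_{k-1}(S^T)f_n/w$ plus the $\partial B_R^{n-1}$ and $\partial\Omega'$ fluxes, and the level-set evaluation of the $\partial\Omega$ and $\partial\Omega'$ terms. Two small corrections: first, $(f_\alpha/w)_\beta$ is not symmetric in $\alpha,\beta$ (so $S^T$ is not a Hessian), but $\partial_\beta T_{k-2}(S^T)^\beta_\alpha=0$ still holds because $S^\eta_\gamma=\partial_\gamma(f_\eta/w)$ is a Jacobian, so $\partial_\beta S^\eta_\gamma$ is symmetric in the antisymmetrized pair $\beta,\gamma$ (a more direct substitute for the paper's Codazzi argument); second, the decay alone only gives $\sigma_{2a}(S)=O(|x|^{-2a-a\tau})$, which is integrable only when $\tau>\frac{n-2a}{a}$, so the right-hand side should be read as limits of the truncated integrals (or under integrability of $L_a$ and $\mathscr{B}^{a-1}$), a point the paper also leaves implicit.
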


\begin{proof}
Now by \eqref{eq:div-structure of k-curvature},
\begin{align}
\int_{B_{R}^{+}\backslash\Omega}\frac{1}{k!}L_{k/2}dx\no %\int_{B_{R}^{+}\backslash\Omega}\sigma_{k}(S)dx\\
= & \frac{1}{k}\int_{S_{R}^{+}}(T_{k-1}(S))_{j}^{i}\frac{f_{j}}{w}\frac{x^{i}}{|x|}d\sigma-\frac{1}{k}\int_{B_{R}^{n-1}\backslash\Omega'}(T_{k-1}(S))_{j}^{n}\frac{f_{j}}{w}dx\no\\
 & +\frac{1}{k}\int_{\partial\Omega}(T_{k-1}(S))_{j}^{i}\frac{f_{j}}{w}(-\frac{f_{i}}{|Df|})d\sigma\no\\
= & \frac{1}{k}\int_{S_{R}^{+}}(T_{k-1}(S))_{j}^{i}\frac{f_{j}}{w}\frac{x^{i}}{|x|}d\sigma+\frac{1}{k}\int_{\partial\Omega}(T_{k-1}(S))_{j}^{i}\frac{f_{j}}{w}(-\frac{f_{i}}{|Df|})d\sigma\label{Lk integral}\\
 & -\frac{1}{k}\int_{B_{R}^{n-1}\backslash\Omega'}(T_{k-1}(S))_{n}^{n}\frac{f_{n}}{w}dx-\frac{1}{k}\int_{B_{R}^{n-1}\backslash\Omega'}(T_{k-1}(S))_{\alpha}^{n}\frac{f_{\alpha}}{w}dx,\no
\end{align}
where $\nu =\frac{Df}{|Df|}$ is the outward unit normal along $\partial\Omega$.

\textbf{Claim.}
\begin{equation}\label{tangential equality}
(T_{k-1}(S))_{\alpha}^{n}\frac{f_{\alpha}}{w}
=-\partial_{\beta }\left(T_{k-2}(S^T)_{\alpha}^{\beta}\frac{f_{n}f_{\alpha}}{1+|Df|^{2}}\right)+(k-1)\sigma_{k-1}(S^T)\frac{f_{n}}{w}.
\end{equation}
To show the claim, by the definition of $T_{k-1}$ and \eqref{expression of S in function},
\begin{align}
  T_{k-1}(S)_{\alpha}^{n}\frac{f_{\alpha}}{w}\no= & \frac{1}{(k-1)!}\delta^{
i_{1}  \cdots  i_{k-1}n}_
{j_{1}  \cdots  j_{k-1}\alpha}
S_{i_{1}}^{j_{1}}\cdots S_{i_{k-1}}^{j_{k-1}}\frac{f_{\alpha}}{w}\nonumber \\
= & \frac{1}{(k-2)!}\delta^{
i_{1}  \cdots  i_{k-1}n}_
{n  \cdots  j_{k-1}\alpha}S_{i_{1}}^{n}\cdots S_{i_{k-1}}^{j_{k-1}}\frac{f_{\alpha}}{w}\nonumber \\
= & \frac{1}{(k-2)!}\delta^{
i_{1}  \cdots  i_{k-1}n}_{
n  \cdots  j_{k-1}\alpha
}(\frac{f_{n}}{w})_{i_{1}}S_{i_{2}}^{j_{2}}\cdots S_{i_{k-1}}^{j_{k-1}}\frac{f_{\alpha}}{w}\nonumber \\
= & -\sum_{i_1=1}^{n-1}\partial_{i_1}\left(T_{k-2}(S^T)_{\alpha}^{i_1}\frac{f_{n}f_{\alpha}}{1+|Df|^{2}}\right)\label{eq:boundary tangential integral 1}\\
&-\frac{1}{(k-2)!}\delta^{
i_{1}  \cdots  i_{k-1}n}_
{n  \cdots  j_{k-1}\alpha}\frac{f_{n}}{w}\partial_{i_{1}}\big(S_{i_{2}}^{j_{2}}\cdots S_{i_{k-1}}^{j_{k-1}}\frac{f_{\alpha}}{w}\big).\nonumber 
\end{align}
By the Codazzi equation,
\begin{align*}
\partial_{i_{1}}\big(S_{i_{2}}^{j_{2}}\big) & =\nabla_{i_{1}}S_{i_{2}}^{j_{2}}+\Gamma_{i_{1}i_{2}}^{l}S_{l}^{j_{2}}-\Gamma_{i_{1}k}^{j_{2}}S_{i_{2}}^{k}\\
 & =\nabla_{i_{2}}S_{i_{1}}^{j_{2}}+\Gamma_{i_{1}i_{2}}^{l}S_{l}^{j_{2}}-\Gamma_{i_{1}k}^{j_{2}}S_{i_{2}}^{k},
\end{align*}
where $\Gamma_{i_{1}i_{2}}^{l}=\frac{f_{l}f_{i_{1}i_{2}}}{1+|Df|^{2}}$,
$\Gamma_{i_{1}k}^{j_{2}}=\frac{f_{j_{2}}f_{i_{1}k}}{1+|Df|^{2}}$
and $\nabla$ is the Levi--Civita connection of the induced metric $g$ on $M$.

By symmetry and antisymmetry in the indices $i_1,i_2$ and $j_2,\alpha$, we have
\[
\delta^{
i_{1}  \cdots  i_{k-1}n}_{
n  \cdots  j_{k-1}\alpha
}\frac{f_{n}}{w}\partial_{i_{1}}\big(S_{i_{2}}^{j_{2}}\big)\cdots S_{i_{k-1}}^{j_{k-1}}\frac{f_{\alpha}}{w}=0.
\]
Therefore, 
\begin{align}
  \frac{1}{(k-1)!}\delta^{
i_{1}  \cdots  i_{k-1}n}_{
n  \cdots  j_{k-1}\alpha
}\frac{f_{n}}{w}\partial_{i_{1}}\big(S_{i_{2}}^{j_{2}}\cdots S_{i_{k-1}}^{j_{k-1}}\frac{f_{\alpha}}{w}\big)\label{eq:third derivative vanishing}
= & \frac{1}{(k-1)!}\delta^{
i_{1}  \cdots  i_{k-1}n}_{
n  \cdots  j_{k-1}\alpha
}\frac{f_{n}}{w}S_{i_{2}}^{j_{2}}\cdots S_{i_{k-1}}^{j_{k-1}}\partial_{i_{1}}\big(\frac{f_{\alpha}}{w}\big) \\
= & -\frac{1}{(k-1)!}\sum_{i,j,\alpha=1}^{n-1}\delta^{
i_{1}i_{2}  \cdots i_{k-1}}_
{\alpha j_{2}  \cdots  j_{k-1}
}S_{i_{1}}^{\alpha}S_{i_{2}}^{j_{2}}\cdots S_{i_{k-1}}^{j_{k-1}}\frac{f_{n}}{w}\nonumber \\
= & -\sigma_{k-1}(S^T)\frac{f_{n}}{w}.\nonumber 
\end{align}
By \eqref{eq:boundary tangential integral 1} and \eqref{eq:third derivative vanishing}, we get the claim.
By \eqref{tangential equality},
\begin{align*}
  \int_{B_{R}^{n-1}\setminus\Omega'}(T_{k-1}(S))_{\alpha}^{n}\frac{f_{\alpha}}{w}\,dx
= & (k-1)\int_{B_{R}^{n-1}\setminus\Omega'}\sigma_{k-1}(S^T)\frac{f_{n}}{w}\,dx\\
 & -\int_{\partial B_{R}^{n-1}}T_{k-2}(S^T)_{\alpha}^{i_{1}}\frac{f_{n}}{w}\frac{f_{\alpha}}{w}\frac{x_{i_{1}}}{|x|}\,dl+\int_{\partial\Omega'}T_{k-2}(S^T)_{\alpha}^{i_{1}}\frac{f_{i_{1}}f_{\alpha}}{w|\overline{D}f|}\frac{f_{n}}{w}\,dl,
\end{align*}
where $\overline{D}f=(f_1,\cdots, f_{n-1})$ denotes the tangential gradient on $\partial\mathbb{R}_{+}^{n}$.

Together with \eqref{Lk integral}, it follows that
\begin{align*}
 & \int_{B_{R}^{+}\setminus\Omega}\frac{1}{k!}L_{k/2}\,dx\\
= & \,\frac{1}{k}\int_{S_{R}^{+}}(T_{k-1}(S))_{j}^{i}\frac{f_{j}}{w}\frac{x^{i}}{|x|}\,d\sigma-\frac{1}{k}\int_{\partial\Omega}(T_{k-1}(S))_{j}^{i}\frac{f_{j}}{w}\frac{f_{i}}{|Df|}\,d\sigma
 -\int_{B_{R}^{n-1}\setminus\Omega'}\sigma_{k-1}(S^T)\frac{f_{n}}{w}\,dx\\
 & +\frac{1}{k}\int_{\partial B_{R}^{n-1}}T_{k-2}(S^T)_{\alpha}^{i_{1}}\frac{f_{n}}{w}\frac{f_{\alpha}}{w}\frac{x^{i_{1}}}{|x|}\,dl-\frac{1}{k}\int_{\partial\Omega'}T_{k-2}(S^T)_{\alpha}^{i_{1}}\frac{f_{i_{1}}f_{\alpha}}{w|\overline{D}f|}\frac{f_{n}}{w}\,dl.
\end{align*}
Therefore,
\begin{align}
 & \lim_{R\to\infty}\frac{1}{k}\int_{\partial B_{R}^{+}}(T_{k-1}(S))_{j}^{i}\frac{f_{j}}{w}\frac{x^{i}}{|x|}\,d\sigma+\frac{1}{k}\int_{\partial B_{R}^{n-1}}T_{k-2}(S^T)_{\alpha}^{i_{1}}\frac{f_{n}}{w}\frac{f_{\alpha}}{w}\frac{x^{i_{1}}}{|x|}\,dl\nonumber \\
= & \,\int_{\mathbb{R}_{+}^{n}\setminus\Omega}\frac{1}{k!}L_{k/2}\,dx+\frac{1}{k}\int_{\partial\Omega}(T_{k-1}(S))_{j}^{i}\frac{f_{j}}{w}\Bigl(\frac{f_{i}}{|Df|}\Bigr)\,d\sigma+\int_{\partial\mathbb{R}_{+}^{n}\setminus\Omega'}\sigma_{k-1}(S^T)\frac{f_{n}}{w}\,dx\label{eq:formula 1}\\
 & +\frac{1}{k}\int_{\partial\Omega'}T_{k-2}(S^T)_{\alpha}^{i_{1}}\frac{f_{i_{1}}f_{\alpha}}{w|\overline{D}f|}\frac{f_{n}}{w}\,dl.\nonumber
\end{align}
Since $f$ is constant on $\partial\Omega\cap\mathbb{R}_{+}^{n},$
\begin{equation}
\int_{\partial\Omega}(T_{k-1}(S))_{j}^{i}\frac{f_{j}}{w}(\frac{f_{i}}{|Df|})d\sigma=\int_{\partial\Omega}\sigma_{k-1}(\kappa_{\partial\Omega})\bigg(\frac{|Df|}{w}\bigg)^{k}d\sigma.\label{eq:formula 1-1}
\end{equation}
To see \eqref{eq:formula 1-1}, let $\alpha_{1},\beta_{1}$ be tangential directions on the level set $\partial\Omega$, and let $n_{1}$ be the outward unit normal to $\partial\Omega$ pointing toward $x_{n}>0$. Rotating coordinates if necessary, we may assume that $|Df|=f_{n_{1}}$ and
\[
S_{\beta_{1}}^{\alpha_{1}}=\frac{f_{\alpha_{1}\beta_{1}}}{w}=h_{\alpha_{1}\beta_{1}}\frac{|Df|}{w}
\]
on $\partial\Omega$, where $h_{\alpha\beta}$ is the second fundamental form of $\partial\Omega$ in $\mathbb{R}_{+}^{n}$. Note that since $S\in \Gamma_k^+$, we have $\kappa_{\partial \Omega}\in \Gamma_{k-1}^+$.

A similar argument applies to \eqref{eq:formula 1-1}. Let $\alpha,\beta\in\{1,\dots,n-2\}$ be tangential directions along $\partial\Omega'$, and assume that $|\overline{D}f|=f_{n-1}$, where $\frac{\partial}{\partial x^{n-1}}$ is the outward unit normal to the level set $\partial\Omega'\subset\mathbb{R}^{n-1}$.
Then
$S_{\beta}^{\alpha}=h_{\alpha\beta}^{\partial\Omega'}\,\frac{|\overline{D}f|}{w}$ on $\partial\Omega'$, where $h_{\alpha\beta}^{\partial\Omega'}$ is the second fundamental form of $\partial\Omega'$ in $\partial\mathbb{R}_{+}^{n}$.
Consequently,
\begin{align*}
T_{k-2}(S^T)_{\alpha}^{i_{1}}\frac{f_{i_{1}}f_{\alpha}}{w|\overline{D}f|}\frac{f_{n}}{w}
& =T_{k-2}(S^T)_{n-1}^{n-1}\frac{|\overline{D}f|}{w}\frac{f_{n}}{w}\\
& =\sigma_{k-2}(\kappa_{\partial\Omega'})\left(\frac{|\overline{D}f|}{w}\right)^{k-1}\frac{f_{n}}{w}.
\end{align*}
Thus, since $f_{n}=|Df|\cos\theta$ and $|\overline{D}f|=|Df|\sin\theta$ on $\partial\Omega'$, we have
\begin{align}\label{eq:formula 1-1-1}
\int_{\partial\Omega'}T_{k-2}(S^T)_{\alpha}^{i_{1}}\frac{f_{i_{1}}f_{\alpha}}{w|\overline{D}f|}\frac{f_{n}}{w}\,dl
=\int_{\partial\Omega'}\sigma_{k-2}(\kappa_{\partial\Omega'})\left(\frac{|Df|}{w}\right)^{k}(\sin\theta)^{k-1}\cos\theta\,dl.
\end{align}
With \eqref{eq:formula 1}, \eqref{eq:formula 1-1}, and \eqref{eq:formula 1-1-1}, we complete the proof.
\end{proof}
We now check that if $k=2a$, then
\[
\lim_{R\to\infty}\bigg(\int_{\partial B_{R}^{+}}(T_{k-1}(S))_{j}^{i}\frac{f_{j}}{w}\frac{x^{i}}{|x|}\,d\sigma+\int_{\partial B_{R}^{n-1}}T_{k-2}(S^T)_{\alpha}^{i_{1}}\frac{f_{n}}{w}\frac{f_{\alpha}}{w}\frac{x^{i_{1}}}{|x|}\,dl\bigg)=\frac{2\mathfrak{m}_{a,B}(g)}{c(n,a)(2a-1)!},
\]
where we recall that
\[
\frac{\mathfrak{m}_{a,B}(g)}{c(n,a)}=\lim_{R\to\infty}\left(\int_{S_{R}^{+}}g_{jk,l}P_a^{ijkl}\nu_{i}\,d\sigma+\int_{S_{R}^{n-2}}g_{n\beta}P_a^{n\beta n\alpha}\mu_{\alpha}\,dl+(a-1)\int_{S_{R}^{n-2}}Q_{a,a-2}^{\eta\beta\alpha\gamma}g_{\beta\alpha,\gamma}\mu_{\eta}\,dl\right).
\]

Now we know that 
\begin{thm}
Assume as Theorem \ref{graph formula of mass}. 
\iffalse
Let $f:\mathbb{R}_{+}^{n}\backslash\Omega\rightarrow\mathbb{R}$ be
an asymptotically flat function over $\mathbb{R}_{+}^{n}\backslash\Omega$
of class $C^{2}$ up to boundary, with the assumption as Definition
\ref{def: assumption of f-1}. Sssume that $\kappa_f\in \Gamma_k^+$. Let $\left(\mathbb{R}_{+}^{n}\backslash\Omega,g\right)$
be the graph of $f$ and $f$ be constant on $\partial\Omega.$
\fi
We have
\begin{align*}
\frac{2\mathfrak{m}_{a,B}(g)}{(2a)!c(n,a)}= & \int_{\mathbb{R}_{+}^{n}\backslash\Omega}\frac{1}{(2a)!}L_{a}dx+\frac{1}{2a}\int_{\partial\Omega\cap\mathbb{R}_{+}^{n}}\sigma_{2a-1}(\kappa_{\partial\Omega})(\frac{|Df|}{w})^{2a}d\sigma\\
 & +\int_{\partial\mathbb{R}_{+}^{n}\backslash\Omega'}\sigma_{2a-1}(S^T)\frac{f_{n}}{w}d\sigma+\frac{1}{2a}\int_{\partial\Omega'}\sigma_{2a-2}(\kappa_{\partial\Omega'})(\frac{|Df|}{w})^{2a}(\sin\theta)^{2a-1}\cos\theta d\sigma,
\end{align*}
where $S^T$ is the tangential part of $S$ and $\theta$
is the angle formed by the tangent plane of $\partial\Omega$ at $\partial\Omega'$
and $x_{n}=0.$
\end{thm}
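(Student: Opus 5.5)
The plan is to specialize Theorem~\ref{graph formula of mass} to $k=2a$. The right-hand side of \eqref{graph mass1} is then exactly the right-hand side of the statement. What remains is the identity stated just before the theorem: the limit of the two flux integrals equals $\frac{2\mathfrak{m}_{a,B}(g)}{c(n,a)(2a-1)!}$. Dividing by $k=2a$ then gives the factor $\frac{2\mathfrak{m}_{a,B}(g)}{(2a)!c(n,a)}$. So the whole proof reduces to identifying the flux terms of the graph with those in Definition~\ref{mass defi}.

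\textbf{Interior flux.} We have $g_{ij}=\delta_{ij}+f_if_j$ and $|Df|=O(r^{-\tau/2})$, so $\sigma_{ij}=f_if_j$ has decay order $\tau$. Using \eqref{RandL} and \eqref{expression of S in function}, I would expand $P_a^{ijkl}$ in terms of $S$ and keep only the leading terms. The leading part of $S^i_j$ is $f_{ij}$, and $g_{jk,l}=f_{jl}f_k+f_jf_{kl}$. By the antisymmetry \eqref{eq:antisymm}, $g_{jk,l}P_a^{ijkl}=2f_{jl}f_kP_a^{ijkl}$. Contracting the generalized Kronecker delta then gives, up to an explicit combinatorial constant,
\[
g_{jk,l}P_a^{ijkl}\nu_i \;\approx\; c_a\,(T_{2a-1}(S))^i_j\frac{f_j}{w}\nu_i .
\]
This is the computation of Ge--Wang--Wu \cite{GWW1} for graphs without boundary; the constant works out to $c_a=\frac{(2a-1)!}{2}$ with our normalization. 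The error terms are of order $r^{-(a+1)\tau-2a+1}$ on $S_R^+$. Since $\tau>\frac{n-2a}{a+1}$, this error is $o(R^{-(n-1)})$, so it vanishes in the limit.

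\textbf{Boundary flux.} On $S_R^{n-2}$ the ADM-type term $g_{n\beta}P_a^{n\beta n\alpha}\mu_\alpha=f_nf_\beta P_a^{n\beta n\alpha}\mu_\alpha$ has leading part proportional to $T_{2a-2}$ of the tangential curvature block, contracted with $f_nf_\alpha$. However, $P_a$ also involves normal components $R_{\alpha n}{}^{\beta n}$. The $Q_{a,a-2}$ term is designed to absorb these, through the identity of Theorem~\ref{thm:key boundary equality} together with \eqref{eq:boundary term 2}.

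This is the step I expect to be the main obstacle. The cleanest route avoids computing $Q$ directly and argues by comparison of exhaustions. Take the identity \eqref{eq:volume and surface interg}, which expresses $\int_{B_R^+}L_a+2a\int_{B^{n-1}_R}\mathscr B^{a-1}$ as the mass flux plus $O(1)$, with the $O(1)$ tending to the contribution of the compact part. Compare it with the graph identity derived in the proof of Theorem~\ref{graph formula of mass}. The difference of the two flux expressions at radius $R$ is then a difference of two primitives of the same boundary density, up to integrable errors; this uses Theorem~\ref{thm:key boundary equality} and \eqref{tangential equality}. Hence the difference is a convergent quantity that tends to $0$ as $R\to\infty$.

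Concretely, I would check three points. First, along $\partial\mathbb R^n_+$ the second fundamental form satisfies $L_\alpha^\beta=-S^\beta_\alpha\frac{f_n}{w}+\dots$. Second, $2a\,\mathscr B^{a-1}$ matches $(2a)!\,\sigma_{2a-1}(S^T)\frac{f_n}{w}$ modulo terms that are $O(r^{-(a+1)\tau-2a+1})$ and divergence terms of $Q$-type. Third, the remaining $S_R^{n-2}$ integrals then agree up to $o(1)$, since $R^{n-2}\cdot R^{-(a+1)\tau-2a+2}\to0$.

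Combining the interior and boundary identifications yields the displayed limit, and substituting it into \eqref{graph mass1} with $k=2a$ proves the theorem.
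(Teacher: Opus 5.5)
Your reduction to $k=2a$ in \eqref{graph mass1} and your treatment of the interior flux on $S_R^+$ follow the paper. There is, however, a genuine gap in the boundary flux on $S_R^{n-2}$: the comparison of exhaustions cannot give what you need.

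Subtract \eqref{eq:volume and surface interg} from the graph identity. The bulk densities $L_a\,dv_g$ and $L_a\,dx$ agree up to integrable errors. So do the boundary densities $2a\mathscr{B}^{a-1}d\sigma_g$ and $(2a)!\,\sigma_{2a-1}(S^T)\frac{f_n}{w}dx$; this holds pointwise, with no $Q$-type divergence terms. All this shows is that $2\cdot(\text{mass flux at }R)-(2a-1)!\cdot(\text{graph flux at }R)$ has a finite limit, which you already knew because each flux converges separately. Nothing forces that limit to be $0$. It collects compact-part contributions, and a vector field on $\partial\mathbb{R}^n_+$ with integrable divergence can have nonzero flux through $S_R^{n-2}$ as $R\to\infty$ (e.g.\ $x'/|x'|^{n-1}$).

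Your third check is exactly the missing statement, but it does not follow from your first two. Each of $g_{n\beta}P_a^{n\beta n\alpha}\mu_\alpha$ and $Q_{a,a-2}^{\eta\beta\alpha\gamma}g_{\beta\alpha,\gamma}\mu_\eta$ is only $O(R^{-a\tau-2a+2})$. When $\tau$ is near $\frac{n-2a}{a+1}$, $R^{n-2}$ times this does not tend to $0$. So you must exhibit a pointwise leading-order cancellation against the graph integrand $\frac{(2a-1)!}{2}T_{2a-2}(S^T)^{\eta}_{\alpha}f_\alpha f_n\mu_\eta$.

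That is precisely the computation you set out to avoid, and the paper does it directly:
\begin{itemize}
\item In $P_a^{n\beta n\alpha}$ the index $n$ sits in both rows of the Kronecker symbol, so all remaining indices are tangential. Then \eqref{RandL} gives $g_{n\beta}P_a^{n\beta n\alpha}\mu_\alpha\approx\frac{(2a-2)!}{2}f_nf_\beta T_{2a-2}(S^T)^\beta_\alpha\mu_\alpha$.
\item In the $Q$-term, $g_{\beta\alpha,\gamma}=f_{\beta\gamma}f_\alpha+f_\beta f_{\alpha\gamma}$, and the second summand vanishes by antisymmetry in $\alpha,\gamma$. Inserting $L_{\alpha\beta}\approx f_nf_{\alpha\beta}$ from Lemma~\ref{lem:second fundametal in graph} gives $Q_{a,a-2}^{\eta\beta\alpha\gamma}g_{\beta\alpha,\gamma}\mu_\eta\approx(2a-2)!\,f_nf_\alpha T_{2a-2}(S^T)^\eta_\alpha\mu_\eta$.
\item Both approximations hold with errors $O(R^{-(a+1)\tau-2a+2})$.
\end{itemize}
The weights combine as $\frac{(2a-2)!}{2}+(a-1)(2a-2)!=\frac{(2a-1)!}{2}$, the same constant as on $S_R^+$. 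This is what makes the two flux expressions agree, and no argument that only compares divergences can see it. Once it is done, the exhaustion comparison is unnecessary.

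Two smaller slips:
\begin{itemize}
\item $f_jf_{kl}P_a^{ijkl}=0$, since $f_{kl}$ is symmetric and $P_a$ is antisymmetric in $k,l$. Hence $g_{jk,l}P_a^{ijkl}=f_{jl}f_kP_a^{ijkl}$, not twice it. Your factor $2$ would produce $(2a-1)!$ rather than the correct $\frac{(2a-1)!}{2}$ that you quote.
\item With the paper's outward normal, Lemma~\ref{lem:second fundametal in graph} gives $L_{\alpha\beta}=S_{\alpha\beta}f_n/\sqrt{1+|\overline{D}f|^2}$ with a plus sign. Your minus sign would make $2a\mathscr{B}^{a-1}$ and $(2a)!\sigma_{2a-1}(S^T)\frac{f_n}{w}$ opposite in sign.
\end{itemize}
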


\begin{proof}
From the asymptotic behavior of $f$ and \eqref{RandL}, \eqref{expression of S in function}, we have
\begin{align}
 & \lim_{R\rightarrow\infty}\int_{S_{R}^{+}}g_{jk,l}P_a^{ijkl}\nu_{i}d\sigma\no\\
%= & \lim_{R\rightarrow\infty}\int_{S_{R}^{+}}(f_{jl}f_{k}+f_{j}f_{kl})\frac{1}{2^{a}}\delta_{j_{1}j_{2}\cdots j_{2a-3}j_{2a-2}j_{2a-1}j_{2a}}^{i_{1}i_{2}\cdots i_{2a-3}i_{2a-2}ij}R_{i_{1}i_{2}}{}^{j_{1}j_{2}}\cdots R_{i_{2a-3}i_{2a-2}}{}^{j_{2a-3}j_{2a-2}}g^{j_{2a-1}k}g^{j_{2a}l}\nu_{i}\nonumber \\
= & \lim_{R\rightarrow\infty}\int_{S_{R}^{+}}(f_{jl}f_{k}+f_{j}f_{kl})\frac{1}{2^{a}}\delta_{j_{1}j_{2}\cdots j_{2a-3}j_{2a-2}kl}^{i_{1}i_{2}\cdots i_{2a-3}i_{2a-2}ij}R_{i_{1}i_{2}}{}^{j_{1}j_{2}}\cdots R_{i_{2a-3}i_{2a-2}}{}^{j_{2a-3}j_{2a-2}}\nu_{i}\nonumber \\
= & \lim_{R\rightarrow\infty}\int_{S_{R}^{+}}f_{jl}f_{k}\frac{1}{2^{a}}\delta_{j_{1}j_{2}\cdots j_{2a-3}j_{2a-2}kl}^{i_{1}i_{2}\cdots i_{2a-3}i_{2a-2}ij}R_{i_{1}i_{2}}{}^{j_{1}j_{2}}\cdots R_{i_{2a-3}i_{2a-2}}{}^{j_{2a-3}j_{2a-2}}\nu_{i}\nonumber \\
= & \frac{1}{2}\lim_{R\rightarrow\infty}\int_{S_{R}^{+}}f_{jl}f_{k}\delta_{j_{1}j_{2}\cdots j_{2a-3}j_{2a-2}kl}^{i_{1}i_{2}\cdots i_{2a-3}i_{2a-2}ij}S_{i_{1}}^{j_{1}}\cdots S_{i_{2a-2}}^{j_{2a-2}}\nu_{i}d\sigma\nonumber \\
= & \frac{1}{2}\lim_{R\rightarrow\infty}\int_{S_{R}^{+}}f_{k}\delta_{j_{1}j_{2}\cdots j_{2a-3}j_{2a-2}kl}^{i_{1}i_{2}\cdots i_{2a-3}i_{2a-2}ij}S_{i_{1}}^{j_{1}}\cdots S_{i_{2a-2}}^{j_{2a-2}}S_{j}^{l}\nu_{i}d\sigma\nonumber \\[-6pt]
= & \frac{(2a-1)!}{2}\lim_{R\rightarrow\infty}\int_{S_{R}^{+}}f_{k}T_{2a-1}(S)_{k}^{i}\nu_{i}d\sigma,\label{eq:aysmptotic formula 1}
\end{align}
where the second equality holds due to the symmetry and anti-symmetry of $k$ and $l$ in $f_{kl}$ and Kronecker delta.

Also by \eqref{RandL},
\begin{align}&\lim_{R\rightarrow\infty}\int_{S_{R}^{n-2}}g_{n\beta}P_a^{n\beta n\alpha}\mu_{\alpha}dl\label{eq:asymptotic behavior 2}\\
= & \lim_{R\rightarrow\infty}\int_{S_{R}^{n-2}}f_{n}f_{\beta}P_a^{n\beta n\alpha}\mu_{\alpha}dl\nonumber \\
= & \lim_{R\rightarrow\infty}\int_{S_{R}^{n-2}}f_{n}f_{\beta}\mu^{\alpha}\frac{1}{2^{a}}\sum_{i,j=1}^{n}\delta_{j_{1}\cdots j_{2a-2}n\alpha}^{i_{1}\cdots i_{2a-2}n\beta}R_{i_{1}i_{2}}{}^{j_{1}j_{2}}\cdots R_{i_{2a-3}i_{2a-2}}{}^{j_{2a-3}j_{2a-2}}dl\nonumber \\
= & \lim_{R\rightarrow\infty}\int_{S_{R}^{n-2}}f_{n}f_{\beta}\mu^{\alpha}\frac{2^{a-1}}{2^{a}}\sum_{i,j=1}^{n-1}\delta_{j_{1}\cdots j_{2a-2}\alpha}^{i_{1}\cdots i_{2a-2}\beta}S_{i_{1}}^{j_{1}}\cdots S_{i_{2a-2}}^{j_{2a-2}}dl\nonumber \\
%= & \frac{(2a-2)!}{2}\lim_{R\rightarrow\infty}\int_{S_{R}^{n-2}}f_{n}f_{\beta}\mu_{\alpha}T_{2a-2}(S^T)_{\alpha}^{\beta}dl\nonumber \\
= & \frac{(2a-2)!}{2}\lim_{R\rightarrow\infty}\int_{S_{R}^{n-2}}f_{n}f_{\beta}\frac{x^{\alpha}}{|x'|}T_{2a-2}(S^T)_{\alpha}^{\beta}dl,\nonumber 
\end{align}
where 
\[
P_a^{n\beta n\alpha}=\frac{1}{2^{a}}\sum_{i,j=1}^{n}\delta_{j_{1}j_{2}\cdots j_{2a-3}j_{2a-2}j_{2a-1}j_{2a}}^{i_{1}i_{2}\cdots i_{2a-3}i_{2a-2}n\beta}R_{i_{1}i_{2}}{}^{j_{1}j_{2}}\cdots R_{i_{2a-3}i_{2a-2}}{}^{j_{2a-3}j_{2a-2}}g^{j_{2a-1}n}g^{j_{2a}\alpha}.
\] 
Also by using the asymptotic behavior of $f$ and \eqref{RandL}\eqref{expression of S in function}, it follows that 
\begin{align}
 & (a-1)\lim_{R\rightarrow\infty}\int_{S_{R}^{n-2}}Q_{a,a-2}^{\eta\beta\alpha\gamma}g_{\beta\alpha,\gamma}\mu_{\eta}dl\nonumber \\
= & \frac{(a-1)}{2^{a-2}}\lim_{R\rightarrow\infty}\int_{S_{R}^{n-2}}\sum_{i,j=1}^{n-1}\delta_{j_{2}\cdots j_{2a-3}j_{2a-2}\alpha\gamma}^{i_{2}\cdots i_{2a-3}i_{2a-2}\eta\beta}L_{i_{2}}^{j_{2}}\cdots R_{i_{2a-3}i_{2a-2}}{}^{j_{2a-3}j_{2a-2}}g_{\beta\alpha,\gamma}\mu^{\eta}dl\label{eq:asymptotic behavior 3}\\
= & \frac{a-1}{2^{a-2}}\lim_{R\rightarrow\infty}\int_{S_{R}^{n-2}}\sum_{i,j=1}^{n-1}\delta_{j_{2}\cdots j_{2a-3}j_{2a-2}\alpha\gamma}^{i_{2}\cdots i_{2a-3}i_{2a-2}\eta\beta}L_{i_{2}}^{j_{2}}\cdots R_{i_{2a-3}i_{2a-2}}{}^{j_{2a-3}j_{2a-2}}f_{\beta\gamma}f_{\alpha}\mu^{\eta}dl\nonumber \\
= & (a-1)\lim_{R\rightarrow\infty}\int_{S_{R}^{n-2}}(2a-2)!T_{2a-2}(S^T)_{\alpha}^{\eta}f_{\alpha}\frac{x^{\eta}}{|x'|}f_{n}dl,\nonumber 
\end{align}
where in the last equality, we have used $L_{\alpha\beta}=\frac{f_{\alpha\beta}f_{n}}{\sqrt{1+|\overline{D}f|^{2}}\sqrt{1+|Df|^{2}}}$ from Lemma~\ref{lem:second fundametal in graph} in the Appendix.

Therefore, from \eqref{eq:aysmptotic formula 1}, \eqref{eq:asymptotic behavior 2}, and \eqref{eq:asymptotic behavior 3}, we have
\begin{align*}
 & \lim_{R\to\infty}\bigg(\int_{S_{R}^{+}}g_{jk,l}P_a^{ijkl}\nu_{i}\,d\sigma+\int_{S_{R}^{n-2}}g_{n\beta}P_a^{n\beta n\alpha}\mu_{\alpha}\,dl+(a-1)\int_{S_{R}^{n-2}}Q_{a,a-2}^{\eta\beta\alpha\gamma}g_{\beta\alpha,\gamma}\mu^{\eta}\,dl\bigg)\\
= & \,\frac{(2a-1)!}{2}\lim_{R\to\infty}\left(\int_{S_{R}^{+}}f_{k}T_{2a-1}(S)_{k}^{i}\nu_{i}\,d\sigma+\int_{S_{R}^{n-2}}T_{2a-2}(S^T)_{\alpha}^{\eta}f_{\alpha}\frac{x^{\eta}}{|x'|}f_{n}\,dl\right).
\end{align*}
Combining with \eqref{graph mass1}, we conclude the theorem by the definition of  $\mathfrak{m}_{a,B}(g)$.
\end{proof}
We also obtain the following consequence.

\begin{thm}
Assume the hypotheses of Theorem~\ref{graph formula of mass} and that $|Df|\to\infty$ on $\partial\Omega$. Assume additionally that $\lambda(S)\in \overline{\Gamma}_{2a}^+$ and $f_n\ge 0$ on $\partial \mathbb{R}^n_+\backslash \Omega'$. Then
\begin{align*}
\frac{2\mathfrak{m}_{a,B}(g)}{c(n,a)(2a)!}
%= & \,\int_{\mathbb{R}_{+}^{n}\setminus\Omega}\frac{1}{(2a)!}L_{a}\,dx+\frac{1}{2a}\int_{\partial\Omega}\sigma_{2a-1}(\kappa_{\partial\Omega})\,d\sigma+\int_{\partial\mathbb{R}_{+}^{n}\setminus\Omega'}\sigma_{2a-1}(S^T)\frac{f_{n}}{w}\,dx\\
% & +\frac{1}{2a}\int_{\partial\Omega'}\sigma_{2a-2}(\kappa_{\partial\Omega'})(\sin\theta)^{2a-1}\cos\theta\,dl\\
\ge & \,\int_{\mathbb{R}_{+}^{n}\setminus\Omega}\frac{1}{(2a)!}L_{a}\,dx+\frac{1}{2a}\int_{\partial\Omega}\sigma_{2a-1}(\kappa_{\partial\Omega})\,d\sigma\\
  & +\frac{1}{2a}\int_{\partial\Omega'}\sigma_{2a-2}(\kappa_{\partial\Omega'})(\sin\theta)^{2a-1}\cos\theta\,dl.
\end{align*}
Here $\theta$ is the angle formed by the tangent plane of $\partial\Omega$ at $\partial\Omega'$ and $x_{n}=0$, and $\kappa_{\partial\Omega}$ and $\kappa_{\partial\Omega'}$ are the principal curvatures of $\partial\Omega$ and $\partial\Omega'$, respectively.
\end{thm}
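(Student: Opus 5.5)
The plan is to start from the identity in the immediately preceding theorem (the graph formula with $k=2a$). It expresses $\frac{2\mathfrak{m}_{a,B}(g)}{(2a)!c(n,a)}$ as the sum of four terms:
\begin{itemize}
\item the interior integral of $\frac{1}{(2a)!}L_a$;
\item the horizon term $\frac{1}{2a}\int_{\partial\Omega}\sigma_{2a-1}(\kappa_{\partial\Omega})(|Df|/w)^{2a}d\sigma$;
\item the flat boundary term $\int_{\partial\mathbb{R}^n_+\setminus\Omega'}\sigma_{2a-1}(S^T)\frac{f_n}{w}d\sigma$;
\item the corner term on $\partial\Omega'$.
\end{itemize}
The inequality should follow by handling each term separately, using the two additional hypotheses and the limit $|Df|\to\infty$ on $\partial\Omega$.

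First, since $|Df|\to\infty$ on $\partial\Omega$, we have $|Df|/w=|Df|/\sqrt{1+|Df|^2}\to 1$ there. Hence the factors $(|Df|/w)^{2a}$ in the horizon term and the corner term become $1$. This is exactly the form appearing in the right-hand side of the statement. One should phrase this as in Theorem~\ref{thm:Main thm 2}: $f$ is $C^2$ up to the boundary in the interior sense and the identity is applied with these factors replaced by their boundary values. If needed, the identity can be applied on $\{f<c-\varepsilon\}$-type exhaustions and passed to the limit, using that the integrands are bounded by the AF decay at infinity and continuous up to $\partial\Omega$ in the rescaled sense.

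Second, the flat boundary term is dropped because it is nonnegative. Since $\lambda(S)\in\overline\Gamma_{2a}^+$, the standard fact recalled after Corollary~\ref{cor:sharp lower bound for graph} gives $\lambda(S^T)\in\overline\Gamma_{2a-1}^+$. Indeed, the eigenvalues of a principal $(n-1)\times(n-1)$ submatrix of a matrix in $\overline\Gamma_k^+$ lie in $\overline\Gamma_{k-1}^+$; one can see this from $\sigma_{k-1}(S^T)=\partial\sigma_k(S)/\partial S_n^n$ and the concavity and monotonicity of $\sigma_k^{1/k}$ on the cone, approximating from $\Gamma_{2a}^+$. Hence $\sigma_{2a-1}(S^T)\ge0$. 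Since also $f_n\ge 0$ and $w>0$, the integrand $\sigma_{2a-1}(S^T)f_n/w$ is nonnegative on $\partial\mathbb{R}^n_+\setminus\Omega'$, and discarding this term gives "$\ge$". The interior term $\int\frac{1}{(2a)!}L_a=\int\sigma_{2a}(S)$ is kept as stated.

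I expect the main obstacle to be the limiting step $|Df|/w\to1$. The graph becomes vertical at $\partial\Omega$, so $S$ blows up and the boundary values of $T_{2a-1}(S)$ are only meaningful after the rescaling $S^{\alpha_1}_{\beta_1}=h_{\alpha_1\beta_1}|Df|/w$ used in the proof of Theorem~\ref{graph formula of mass}. I would handle this by noting that the preceding identity was already derived with the explicit factor $(|Df|/w)^{2a}$. The only input needed is therefore pointwise convergence of this bounded factor to $1$ on $\partial\Omega$ and on $\partial\Omega'$, together with integrability of $\sigma_{2a-1}(\kappa_{\partial\Omega})$ and $\sigma_{2a-2}(\kappa_{\partial\Omega'})$ on these compact sets.
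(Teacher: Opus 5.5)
You follow the paper's route exactly. You apply the preceding identity with $k=2a$, set $(|Df|/w)^{2a}=1$ on $\partial\Omega$ and $\partial\Omega'$, and drop $\int_{\partial\mathbb{R}^n_+\setminus\Omega'}\sigma_{2a-1}(S^T)\frac{f_n}{w}$ as nonnegative. Your handling of the limit $|Df|/w\to1$ is at the paper's level of rigor and is fine. For the last step the paper offers only the remark that $\lambda(S)\in\overline\Gamma_{2a}^+\Rightarrow\sigma_{2a-1}(S^T)\ge0$ is well known. That step is a genuine gap, in your write-up and in the paper's.

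The fact you quote holds for symmetric matrices: a principal $(n-1)\times(n-1)$ submatrix of a matrix with eigenvalues in $\overline\Gamma_k^+$ has eigenvalues in $\overline\Gamma_{k-1}^+$. Here, however, $S^i_j=\partial_j(f_i/w)=g^{ik}h_{kj}$ with $h=D^2f/w$, which is only self-adjoint for $g=\delta+df\otimes df$, not symmetric. The block $S^T=(S^\beta_\alpha)_{\alpha,\beta<n}$ is the compression of the shape operator onto $T\partial M$ along $\partial_n$. This compression is $g$-orthogonal only where $g_{\alpha n}=f_\alpha f_n=0$.

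Your monotonicity variant fails for the same reason. One has $\sigma_{2a-1}(S^T)=T_{2a-1}(S)^n_n=\tilde T_{nn}-\frac{f_n}{w^2}f_k\tilde T_{kn}$, where $\tilde T=g\,T_{2a-1}(S)$ is symmetric and nonnegative. The second term has no sign when $\overline{D}f\neq0$. Equivalently, perturbing $S^n_n$ alone leaves the class of $g$-self-adjoint matrices, so the cone-monotonicity of $\sigma_{2a}$ says nothing about this derivative.

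Here is a concrete failure with $a=1$, $n=3$. Take $f=x_1+x_3+\frac{\sqrt3}{2}\bigl((x_1+5x_3)^2+\frac12x_2^2\bigr)$ at the origin. There $Df=(1,0,1)$, $w^2=3$, and $h=v\otimes v+\tfrac12e_2\otimes e_2$ with $v=(1,0,5)$. So $D^2f\ge0$, $\lambda(S)=(14,\tfrac12,0)\in\Gamma_2^+$, and $f_3=1>0$, yet
\[
\sigma_1(S^T)=S^1_1+S^2_2=\Bigl(1-\tfrac{f_1\,(f_kh_{k1})}{w^2}\Bigr)+\tfrac12=\Bigl(1-\tfrac{6}{3}\Bigr)+\tfrac12=-\tfrac12<0 .
\]
Hence the integrand you discard can be negative pointwise, even for locally convex graphs.

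To close the step you need one of two things:
\begin{itemize}
\item A genuinely different, non-pointwise argument for the sign of the whole boundary integral.
\item An extra hypothesis making the compression orthogonal, such as $f_\alpha f_n=0$ on $\partial\mathbb{R}^n_+\setminus\Omega'$. Two instances are $\overline{D}f=0$ there, or $f_n=0$, where the term simply vanishes (the variant noted after Corollary~\ref{cor:sharp lower bound for graph}).
\end{itemize}
Under that hypothesis $g^{-1}$ is block diagonal along the boundary. Then $S^T$ is similar to a principal submatrix of the symmetric matrix $g^{-1/2}hg^{-1/2}$, and the symmetric fact gives $\sigma_{2a-1}(S^T)\ge0$.
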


Thus Theorem~\ref{thm:Main thm 2} and Corollary~\ref{cor:sharp lower bound for graph} follow from the above theorem.

\iffalse
First, one can choose the auxiliary function $G(x)=g(x)e^{\frac{x_n}{r}}\varphi(f)\log|\nabla f|$, where $g(x)=1-\frac{|x|^2}{r^2}$ and $\varphi(f)=1+\frac f M$, and $M=\sup_{B_R}|f|$. By maximum principle, one can obtain the boundness of the graph $f$.   Second, assuming that $|\nabla f|(0)\ge \delta>0$ for proper $\delta$,  one can choose $G(x)=g(x) \varphi(f) |\nabla f|$, where $g(x)=1-|x|^2 / r^2, \varphi(f)=$ $(1-f / M)^{-\alpha}, M=M_r=4 \sup \left\{|f(x)|, x \in B_r(0)\right\}$, and $\alpha \in(0,1)$, to conclude $\nabla f(0)=0$.
\fi

\section{Mass in locally conformally flat manifolds}

Let $(M^{n},g)$ be an $n$-dimensional Riemannian manifold with boundary $\partial M$. Near a boundary point we choose local coordinates $\{x^{i}\}_{i=1}^{n}$ such that $x^{n}$ is a defining function for $\partial M$ and $\{x^{\alpha}\}_{\alpha=1}^{n-1}$ are local coordinates on $\partial M$. In Fermi coordinates the metric takes the form
\[
g=(dx^{n})^{2}+g_{\alpha\beta}\,dx^{\alpha}dx^{\beta}.
\]

Throughout this section we assume that $(M,g)$ is locally conformally flat. Then the Riemann curvature tensor is determined by the Schouten tensor $A=A_g$ via
\begin{equation}\label{RandSchouten}
R_{ijkl}=A_{ik}g_{jl}+A_{jl}g_{ik}-A_{il}g_{jk}-A_{jk}g_{il}.
\end{equation}

Our first goal is to rewrite the bulk--boundary functional
$\int_{M}L_{a}\,dv_{g}+2a\int_{\partial M}\mathscr{B}^{a-1}\,d\sigma_{g}$
in terms of elementary symmetric functions of $A_g$ and a natural mixed boundary term. We then relate the boundary GBC mass to these conformal-geometric quantities, yielding in particular nonnegativity in the conformally flat setting.

We will frequently use the following contraction identity for generalized Kronecker symbols: if $1\le i,j\le m$, then
\begin{equation}\label{basci alge}
\sum_{s=1}^{m}\delta_{j_{1}\ldots j_{k}s_{1}\ldots s_{r}}^{i_{1}\ldots i_{k}s_{1}\ldots s_{r}}=\frac{(m-k)!}{(m-k-r)!}\delta_{j_{1}\ldots j_{k}}^{i_{1}\ldots i_{k}}.
\end{equation}

\begin{proposition}
Let $(M, g)$ be a locally conformally flat manifold. Assume that $2a<n$. Then
\begin{align}
&\int_{M}L_{a}dv_{g}+2a\int_{\partial M}\mathscr{B}^{a-1}d\sigma_{g}\no\\
=&\frac{2^{a}a!(n-a)!}{(n-2a)!}\left(\int_{M}\sigma_{a}(A_{g})dv_{g}+\frac{a}{n-a}\int_{\partial M}\sigma_{a,a-1}(A^{T}_g,  L_g)d\sigma_{g}\right),
\end{align}
where $A^T_{g,\alpha\beta}=A_{\alpha\beta}$ is the tangential part of the Schouten tensor $A_g$ on $\partial M$, and $L_g$ is the second fundamental form of $\partial M\subset M$. %and $$\sigma_{a,a-1}(A^{T},  L)=\frac{1}{a!}\sum_{\alpha,\beta=1}^{n-1}\binom{\alpha_{1}\cdots\alpha_{a}}{\beta_{1}\cdots\beta_{a}}A_{\alpha_{1}}^{\beta_{1}}\cdots A_{\alpha_{a-1}}^{\beta_{a-1}}L_{\alpha_{a}}^{\beta_{a}}.$$
\end{proposition}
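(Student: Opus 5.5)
This identity is pointwise: I will prove separately that $L_a=\frac{2^a a!(n-a)!}{(n-2a)!}\sigma_a(A_g)$ in $M$ and that $2a\,\mathscr{B}^{a-1}=\frac{2^a a!(n-a)!}{(n-2a)!}\cdot\frac{a}{n-a}\,\sigma_{a,a-1}(A^T_g,L_g)$ on $\partial M$, and then integrate.

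\emph{Interior term.} I will substitute \eqref{RandSchouten}, written with one index pair raised as $R_{ij}{}^{kl}=A_i^k\delta_j^l+A_j^l\delta_i^k-A_i^l\delta_j^k-A_j^k\delta_i^l$, into the definition \eqref{eq:definition of La}. Contracted against the antisymmetric symbol $\delta^{i_1\cdots i_{2a}}_{j_1\cdots j_{2a}}$, the four terms in each curvature factor collapse to $4A_{i_{2s-1}}^{j_{2s-1}}\delta_{i_{2s}}^{j_{2s}}$. This leaves $2^a\delta^{i_1\cdots i_{2a}}_{j_1\cdots j_{2a}}A\delta A\delta\cdots$. I then apply \eqref{basci alge} with $m=n$, $k=a$, $r=a$ to eliminate the $a$ Kronecker deltas, which produces the factor $\frac{(n-a)!}{(n-2a)!}$ times $a!\,\sigma_a(A_g)$. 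This is the classical formula already quoted in the introduction, and I expect it to be routine.

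\emph{Boundary term.} Here the summation indices in \eqref{eq:definition of Ba-1} run only over tangential directions $1,\dots,n-1$. The key point is that, because we are in Fermi coordinates with $g_{n\alpha}=0$, the restriction $R_{\alpha\beta}{}^{\gamma\delta}$ is still given by \eqref{RandSchouten} with $A$ replaced by its tangential part $A^T$ and $g$ replaced by the induced metric. The same collapse therefore gives
\[
2^{a-1}\mathscr{B}^{a-1}=2^{a-1}\cdot 2^{a-1}\,\delta^{\alpha_1\cdots\alpha_{2a-1}}_{\beta_1\cdots\beta_{2a-1}}A\delta\cdots A\delta\,L.
\]
It contains $a-1$ factors of $A^T$, $a-1$ Kronecker deltas, and one factor $L$. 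Applying \eqref{basci alge} now with $m=n-1$, $k=a$, $r=a-1$ yields
\[
\mathscr{B}^{a-1}=2^{a-1}\frac{(n-1-a)!}{(n-2a)!}\,a!\,\sigma_{a,a-1}(A^T,L).
\]
Multiplying by $2a$ gives $2^a a!\frac{(n-a)!}{(n-2a)!}\cdot\frac{a}{n-a}\sigma_{a,a-1}$, since $\frac{(n-1-a)!}{(n-a)!}=\frac{1}{n-a}$.

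\emph{Main obstacle.} The only delicate step is the combinatorial bookkeeping. I must check that the mixed definition of $\sigma_{a,a-1}$ (normalized by $1/a!$, with $C=A^T$ appearing $a-1$ times and $B=L$ once) matches the count exactly. I also need to verify that no extra terms involving $A_{nn}$ or $A_{n\alpha}$ enter, which holds because all indices in $\mathscr{B}^{a-1}$ are tangential. Finally, the constant $2^{a-1}$ from the definition must be tracked against the $2^{a-1}$ generated by the collapse. I would verify the constant by testing the case $a=1$, where the identity should reduce to $\int R+2\int H$ equaling $2(n-1)\left(\int\sigma_1(A)+\frac{1}{n-1}\int H\right)$, which is consistent with $R=2(n-1)\sigma_1(A)$.
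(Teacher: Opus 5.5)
Your proposal is correct and follows essentially the paper's route. You substitute the Schouten decomposition into $\mathscr{B}^{a-1}$, collapse each curvature factor to $4A\delta$ under the antisymmetric symbol, and contract with \eqref{basci alge} for $m=n-1$ to get $\mathscr{B}^{a-1}=2^{a-1}\frac{(n-1-a)!\,a!}{(n-2a)!}\sigma_{a,a-1}(A^T,L)$, which matches the paper.

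There are two minor differences. The paper quotes the interior identity $L_a=2^a a!\frac{(n-a)!}{(n-2a)!}\sigma_a(A_g)$ from Ge--Wang--Wu instead of rederiving it as you do. Your explicit use of an adapted (Fermi) frame, so that $R_{\alpha\beta}{}^{\gamma\delta}$ involves only $A^T$, spells out a step the paper uses only implicitly.
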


\begin{proof}

By \eqref{RandSchouten}, 
\begin{align*}
 & \sum_{\alpha,\beta=1}^{n-1}\delta^{\alpha_{1}\cdots\alpha_{2a-1}}_{\beta_{1}\cdots\beta_{2a-1}}R_{\alpha_{1}\alpha_{2}}^{\quad\quad\beta_{1}\beta_{2}}\cdots R_{\alpha_{2a-3}\alpha_{2a-2}}^{\quad\quad\,\quad\beta_{2a-3}\beta_{2a-2}}L_{\alpha_{2a-1}}^{\beta_{2a-1}}\\
= & 2^{a-1}\sum_{\alpha,\beta=1}^{n-1}\delta^{\alpha_{1}\cdots\alpha_{2a-2}\alpha_{2a-1}}_{\beta_{1}\cdots\beta_{2a-2}\beta_{2a-1}}\left(A_{\alpha_{1}}^{\beta_{1}}g_{\alpha_{2}}^{\beta_{2}}+A_{\alpha_{2}}^{\beta_{2}}g_{\alpha_{1}}^{\beta_{1}}\right)\cdots\left(A_{\alpha_{2a-3}}^{\beta_{2a-3}}g_{\alpha_{2a-2}}^{\beta_{2a-2}}+A_{\alpha_{2a-2}}^{\beta_{2a-2}}g_{\alpha_{2a-3}}^{\beta_{2a-3}}\right)L_{\alpha_{2a-1}}^{\beta_{2a-1}}\\
= & 2^{2(a-1)}\sum_{\alpha,\beta=1}^{n-1}\delta^{\alpha_{1}\cdots\alpha_{2a-2}\alpha_{2a-1}}_{\beta_{1}\cdots\beta_{2a-2}\beta_{2a-1}}A_{\alpha_{1}}^{\beta_{1}}g_{\alpha_{2}}^{\beta_{2}}A_{\alpha_{3}}^{\beta_{3}}g_{\alpha_{4}}^{\beta_{4}}\cdots A_{\alpha_{2a-3}}^{\beta_{2a-3}}g_{\alpha_{2a-2}}^{\beta_{2a-2}}L_{\alpha_{2a-1}}^{\beta_{2a-1}}\\
%= & 2^{2(a-1)}\sum_{\alpha,\beta=1}^{n-1}\binom{\alpha_{1}\cdots\alpha_{2a-2}\alpha_{2a-1}}{\beta_{1}\cdots\beta_{2a-2}\beta_{2a-1}}A_{\alpha_{1}}^{\beta_{1}}g_{\alpha_{2}}^{\beta_{2}}A_{\alpha_{3}}^{\beta_{3}}g_{\alpha_{4}}^{\beta_{4}}\cdots A_{\alpha_{2a-2}}^{\beta_{2a-2}}g_{\alpha_{2a-3}}^{\beta_{2a-3}}L_{\alpha_{2a-1}}^{\beta_{2a-1}}\\
= & 2^{2(a-1)}\sum_{\alpha,\beta=1}^{n-1}\delta^{\alpha_{1}\cdots\alpha_{a-1}\alpha_{a}\cdots\alpha_{2a-1}}_{\alpha{}_{1}\cdots\alpha_{a-1}\beta_{a}\cdots\beta_{2a-1}}A_{\alpha_{a}}^{\beta_{a}}\cdots A_{\alpha_{2a-2}}^{\beta_{2a-2}}L_{\alpha_{2a-1}}^{\beta_{2a-1}}\\
= & 2^{2(a-1)}\frac{(n-1-a)!}{(n-1-(2a-1))!}\sum_{\alpha,\beta=1}^{n-1}\delta^{\alpha_{a}\cdots\alpha_{2a-1}}_{\beta_{a}\cdots\beta_{2a-1}}A_{\alpha_{a}}^{\beta_{a}}\cdots A_{\alpha_{2a-2}}^{\beta_{2a-2}}L_{\alpha_{2a-1}}^{\beta_{2a-1}}\\
%= & 2^{2(a-1)}\frac{(n-1-a)!}{(n-2a)!}\sum_{\alpha,\beta=1}^{n-1}\binom{\alpha_{a}\cdots\alpha_{2a-1}}{\beta_{a}\cdots\beta_{2a-1}}A_{\alpha_{a}}^{\beta_{a}}\cdots A_{\alpha_{2a-2}}^{\beta_{2a-2}}L_{\alpha_{2a-1}}^{\beta_{2a-1}}\\
= & 2^{2(a-1)}\frac{(n-1-a)!a!}{(n-2a)!}\sigma_{a,a-1}(A^{T},L),
\end{align*}
where the second-to-last equality follows from \eqref{basci alge}.

Thus,
\begin{align}\label{expressionof B in LCF}
\mathscr{B}^{a-1} & =2^{a-1}\frac{(n-1-a)!a!}{(n-2a)!}\sigma_{a,a-1}(A^{T},L).
\end{align}

By Proposition 2.2 in \cite{GWW2}, we know that
\[
L_{a}=2^{a}a!\frac{(n-a)!}{(n-2a)!}\sigma_{a}(A_{g}),
\]
and then, together with \eqref{expressionof B in LCF}, it follows that
\begin{align*}
 & \int_{M}L_{a}dv_{g}+2a\int_{\partial M}\mathscr{B}^{a-1}d\sigma_{g}\\
%= & \int_{M}2^{a}a!\frac{(n-a)!}{(n-2a)!}\sigma_{a}(A_{g})dv_{g}+2^{a}a\int_{\Sigma}\frac{(n-1-a)!a!}{(n-2a)!}\sigma_{a,a-1}(A^{T},L)d\sigma_{g}\\
= & \frac{2^{a}a!(n-a)!}{(n-2a)!}\left(\int_{M}\sigma_{a}(A_{g})dv_{g}+\frac{a}{n-a}\int_{\partial M}\sigma_{a,a-1}(A^{T},L)d\sigma_{g}\right).
\end{align*}
\end{proof}
For $n\ge 2a$, S.~Chen \cite{Chen} introduced a natural boundary curvature
\[
\mathcal{B}^a=\sum_{i=0}^{a-1} C_1(n,a,i)\,\sigma_{2a-i-1,\,i}\bigl(A_g^T,L_g\bigr),
\]
where
\[
C_1(n,a,i)=\frac{(2a-i-1)!(n-2a+i)!}{(n-a)!(2a-2i-1)!!\,i!}
\]
and $\sigma_{2a-i-1,\,i}(A_g^T,L_g)$ is the mixed curvature from Section~3. The quantity $\mathcal{B}^a$ naturally accompanies the functional
\[
\int_{M}\sigma_{a}(A_g)\,dv_{g}+\int_{\partial M}\mathcal{B}^a\,d\sigma_{g}.
\]
We refer to \cite{CC, CMW, CW18,CW2020, W, CLL2} for recent developments of $\mathcal{B}^a$ in the positive cone $\Gamma_a^+$. Note that $C_1(n,a,a-1)=\frac{a}{n-a}$, so the term $\frac{a}{n-a}\sigma_{a,a-1}(A^{T},L)$ appearing above is one of the summands in $\mathcal{B}^{a}$.

When $g=e^{-2u}g_{\mathbb{E}}$, along $\partial\mathbb{R}^{n}_{+}$ we have
\[
A^T_{\alpha\beta}=u_{\alpha\beta}+u_{\alpha}u_{\beta}-\frac{|Du|^2}{2}\,\delta_{\alpha\beta},
\qquad
L_{\alpha\beta}=e^{-u}u_n\,\delta_{\alpha\beta},
\qquad
h_{g}=e^u u_n.
\] 

\begin{lem}
Let $g=e^{-2u}g_{\mathbb{E}}$ locally. Then, in $M$,
\begin{equation}\label{P^sttm}
P_a^{sttm}=-2^{a-2}\frac{(n-a)!}{(n-2a)!}(a-1)!T_{a-1}(A)^{sm}e^{2u},
\end{equation}
and on $\partial M$,
\begin{equation}\label{P^sttm_L}
Q_{a,a-2}^{\eta\alpha\alpha\gamma}=-2^{a-2}\frac{(n-1-a)!}{(n-2a)!}(a-1)!T_{a-1,a-2}(A^{T},L)^{\eta\gamma}e^{2u},
\end{equation}
where
\[
T_{a-1,a-2}(A^{T},L)^{\eta}_\gamma=\frac{1}{(a-1)!}\sum_{i,j=1}^{n-1}\delta_{j_{1}\cdots j_{a-2}j\gamma}^{i_{1}\cdots i_{a-2}i\eta}A_{i_{1}}^{j_{1}}\cdots A_{i_{a-2}}^{j_{a-2}}L_{i}^{j}.
\]
\end{lem}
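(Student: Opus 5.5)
The plan is a direct algebraic computation that plugs the conformally flat curvature decomposition \eqref{RandSchouten} into the definitions of $P_a$ and $Q_{a,a-2}$. It then collapses the generalized Kronecker deltas with the contraction identity \eqref{basci alge}, following the proof of the preceding Proposition for $\mathscr{B}^{a-1}$.

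For \eqref{P^sttm}, start from \eqref{eq:definition of P} and trace over the middle pair, setting $l=t$. Since $g^{j_{2a-1}t}$ lowers the index against $g_{ts}$-type factors, it is easiest to work with mixed indices. The contraction $P_a^{s t}{}_{t}{}^{m}$ becomes $\frac{1}{2^a}\delta^{i_1\cdots i_{2a-2}\, s\, t}_{j_1\cdots j_{2a-2}\, t\, j}R_{i_1i_2}{}^{j_1j_2}\cdots g^{jm}$, with the sign coming from the position of $t$ in the lower row. Applying \eqref{basci alge} with $r=1$ removes the pair $t$ and yields $-(n-2a+1)\delta^{i_1\cdots i_{2a-2}s}_{j_1\cdots j_{2a-2}j}$. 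Next, write $R_{i_1i_2}{}^{j_1j_2}=A_{i_1}^{j_1}\delta_{i_2}^{j_2}+A_{i_2}^{j_2}\delta_{i_1}^{j_1}-A_{i_1}^{j_2}\delta_{i_2}^{j_1}-A_{i_2}^{j_1}\delta_{i_1}^{j_2}$. Under antisymmetrization each factor contributes $4A_{i_1}^{j_1}\delta_{i_2}^{j_2}$, so the product gives $2^{2(a-1)}$ times $a-1$ copies of $A$ and $a-1$ Kronecker deltas. Contracting these $a-1$ deltas by \eqref{basci alge} produces the factor $\frac{(n-a)!}{(n-2a+1)!}$, which combines with the earlier $(n-2a+1)$ to give $\frac{(n-a)!}{(n-2a)!}$. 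What remains is $\delta^{i_1\cdots i_{a-1}s}_{j_1\cdots j_{a-1}j}A\cdots A=(a-1)!\,T_{a-1}(A)^s_j$. Collecting constants gives $2^{-a}2^{2a-2}=2^{a-2}$ and the minus sign.

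The factor $e^{2u}$ comes from raising and lowering with $g=e^{-2u}\delta$ in the coordinate frame. The metric relation $R_{ijkl}=A_{ik}g_{jl}+\dots$ has mixed form $R_{ij}{}^{kl}=A_i^k\delta_j^l+\dots$, with $A_i^k=g^{kp}A_{ip}$. Converting $T_{a-1}(A)^s{}_j g^{jm}$ to the index convention of the statement, where $T^{sm}$ is written in Euclidean-index form with $A$ as the matrix $D^2u+\dots$, produces the power $e^{2u}$. I must track carefully which indices are raised with $g$ versus $\delta$; this bookkeeping fixes the exponent.

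For \eqref{P^sttm_L}, repeat the computation in dimension $n-1$ with the tangential indices $\alpha,\beta,\dots$. Trace over the second and third slots of $Q_{a,a-2}$. The contraction removes one index pair with factor $-(n-1-(2a-2))=-(n-2a+1)$. The $a-2$ curvature factors each give $4A\delta$, and contracting their $a-2$ deltas gives $\frac{(n-1-a)!}{(n-2a+1)!}$. The remaining delta with $a-2$ copies of $A$ and one $L$ equals $(a-1)!\,T_{a-1,a-2}(A^T,L)$. On $\partial M$ the tangential components of $R$ still satisfy \eqref{RandSchouten} with $A^T$, so the same reduction applies. The constants are $2^{-(a-2)}\cdot 2^{2(a-2)}=2^{a-2}$ and $\frac{(n-1-a)!}{(n-2a)!}$, matching the statement.

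The main obstacle is purely bookkeeping: getting the sign from the position of the contracted index in the delta and the exact $e^{2u}$ power. I would check both against the case $a=2$ by direct computation, where $P_2^{sttm}$ should reduce to a multiple of the Einstein-type tensor $A-\operatorname{tr}(A)g$.
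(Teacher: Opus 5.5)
Your route is the same as the paper's. You contract the traced pair with \eqref{basci alge}, replace each curvature factor by $4A_{i}^{j}\delta_{i'}^{j'}$ using \eqref{RandSchouten}, contract the remaining $a-1$ (resp.\ $a-2$) Kronecker deltas, and recognize $(a-1)!\,T_{a-1}(A)$ (resp.\ $(a-1)!\,T_{a-1,a-2}(A^T,L)$). Your signs and constants, $-2^{a-2}\frac{(n-a)!}{(n-2a)!}(a-1)!$ and $-2^{a-2}\frac{(n-1-a)!}{(n-2a)!}(a-1)!$, are correct.

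\textbf{The gap is the conformal factor.} You defer it as bookkeeping, and the mechanism you propose for it is wrong.
\begin{itemize}
\item You contract with $g_{tl}$, so you compute the metric trace $g_{tl}P_a^{stlm}$. The mixed form $R_{ij}{}^{kl}=A_i^k\delta_j^l+\cdots$ carries no conformal factor, so this gives exactly $-2^{a-2}\frac{(n-a)!}{(n-2a)!}(a-1)!\,T_{a-1}(A)^s{}_jg^{jm}$, with no $e^{2u}$ at all.
\item In the statement, $P_a^{sttm}$ is the plain coordinate sum over $t$. This is how it is used in Proposition~\ref{asym behavior}, where $g_{jk,l}=-2u_le^{-2u}\delta_{jk}$. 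That sum satisfies $\sum_t P_a^{sttm}=\sum_{t,l}\delta_{tl}P_a^{stlm}=e^{2u}\,g_{tl}P_a^{stlm}$, and this is where the $e^{2u}$ comes from.
\item Equivalently, as the paper does it: $g^{j_{2a-1}t}=e^{2u}\delta^{j_{2a-1}t}$ and $g^{j_{2a}m}=e^{2u}\delta^{j_{2a}m}$ produce $e^{4u}\,T_{a-1}(A)^s{}_m$. Then $T_{a-1}(A)^s{}_m e^{4u}=T_{a-1}(A)^{sm}e^{2u}$, where $A$ is kept as the $g$-mixed endomorphism and the index is raised by $g$.
\end{itemize}

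Your alternative explanation, rewriting $T$ in ``Euclidean-index form with $A$ the matrix $D^2u+\cdots$'', cannot produce $e^{2u}$. Since $A_i{}^k=g^{kp}A_{ip}=e^{2u}\bigl(D^2u+Du\otimes Du-\tfrac12|Du|^2\delta\bigr)_{ik}$, the $a-1$ factors of $A$ inside $T_{a-1}$ contribute $e^{2(a-1)u}$. Together with $g^{jm}$ you would get $e^{2au}$, or $e^{2(a+1)u}$ with the coordinate trace. Neither is $e^{2u}$ once $a\ge 2$.

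The same correction applies to $Q_{a,a-2}^{\eta\alpha\alpha\gamma}$. The coordinate sum over $\alpha$ (through $g^{j_{2a-3}\alpha}$) and the factor $g^{j_{2a-2}\gamma}$ give $e^{4u}$. Then $T_{a-1,a-2}(A^T,L)^\eta{}_\gamma\, e^{4u}=T_{a-1,a-2}(A^T,L)^{\eta\gamma}e^{2u}$, with $A^T$ and $L$ taken as the $g$-mixed tensors.

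With $P_a^{sttm}$ read as the coordinate sum and $A$ kept $g$-mixed, your argument is complete.
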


\begin{proof}
From the definition of $P_a^{sttm}$, with \eqref{RandSchouten} and \eqref{basci alge}, we obtain
\begin{align*}
P_a^{sttm} & =\frac{1}{2^{a}}\delta_{j_{1}j_{2}\cdots j_{2a-3}j_{2a-2}j_{2a-1}j_{2a}}^{i_{1}i_{2}\cdots i_{2a-3}i_{2a-2}st}R_{i_{1}i_{2}}{}^{j_{1}j_{2}}\cdots R_{i_{2a-3}i_{2a-2}}{}^{j_{2a-3}j_{2a-2}}g^{j_{2a-1}t}g^{j_{2a}m}\\
 & =\frac{1}{2^{a}}\delta_{j_{1}j_{2}\cdots j_{2a-3}j_{2a-2}tj_{2a}}^{i_{1}i_{2}\cdots i_{2a-3}i_{2a-2}st}R_{i_{1}i_{2}}{}^{j_{1}j_{2}}\cdots R_{i_{2a-3}i_{2a-2}}{}^{j_{2a-3}j_{2a-2}}g^{j_{2a}m}e^{2u}\\
 & =-\frac{1}{2^{a}}\delta_{j_{1}j_{2}\cdots j_{2a-3}j_{2a-2}j_{2a}t}^{i_{1}i_{2}\cdots i_{2a-3}i_{2a-2}st}R_{i_{1}i_{2}}{}^{j_{1}j_{2}}\cdots R_{i_{2a-3}i_{2a-2}}{}^{j_{2a-3}j_{2a-2}}g^{j_{2a}m}e^{2u}\\
 & =-\frac{1}{2^{a}}\frac{(n-2a+1)!}{(n-2a)!}\delta_{j_{1}j_{2}\cdots j_{2a-3}j_{2a-2}m}^{i_{1}i_{2}\cdots i_{2a-3}i_{2a-2}s}R_{i_{1}i_{2}}{}^{j_{1}j_{2}}\cdots R_{i_{2a-3}i_{2a-2}}{}^{j_{2a-3}j_{2a-2}}e^{4u}\\
 & =-\frac{2^{a-1}}{2^{a}}\frac{(n-2a+1)!}{(n-2a)!}\delta_{j_{1}j_{2}\cdots j_{2a-3}j_{2a-2}m}^{i_{1}i_{2}\cdots i_{2a-3}i_{2a-2}s}(A_{i_{1}}^{j_{1}}g_{i_{2}}^{j_{2}}+A_{i_{2}}^{j_{2}}g_{i_{1}}^{j_{1}})\cdots(A_{i_{2a-3}}^{j_{2a-3}}g_{i_{2a-2}}^{j_{2a-2}}+A_{i_{2a-2}}^{j_{2a-2}}g_{i_{2a-3}}^{j_{2a-3}})e^{4u}\\
 & =-\frac{2^{2(a-1)}}{2^{a}}\frac{(n-2a+1)!}{(n-2a)!}\delta_{i_{1}\cdots i_{a-1}j_{a}\cdots j_{2a-2}m}^{i_{1}\cdots i_{a-1}i_{a}\cdots i_{2a-2}s}A_{i_{a}}^{j_{a}}\cdots A_{i_{2a-2}}^{j_{2a-2}}e^{4u}\\
 & =-\frac{2^{a}}{4}\frac{(n-2a+1)!}{(n-2a)!}\frac{(n-a)!}{(n-2a+1)!}\delta_{j_{a}\cdots j_{2a-2}m}^{i_{a}\cdots i_{2a-2}s}A_{i_{a}}^{j_{a}}\cdots A_{i_{2a-2}}^{j_{2a-2}}e^{4u}\\
 & =-\frac{2^{a}}{4}\frac{(n-a)!}{(n-2a)!}(a-1)!T_{a-1}(A)_{m}^{s}e^{4u}\\
 & =-2^{a-2}\frac{(n-a)!}{(n-2a)!}(a-1)!T_{a-1}(A)^{sm}e^{2u}.
\end{align*}
Similarly, by \eqref{basci alge}, we have
\begin{align*}
Q_{a,a-2}^{\eta\alpha\alpha\gamma} & =\frac{1}{2^{a-2}}\sum_{i,j=1}^{n-1}\delta_{jj_{1}\cdots j_{2a-5}j_{2a-4}j_{2a-3}j_{2a-2}}^{ii_{1}\cdots i_{2a-5}i_{2a-4}\eta\alpha}L_{i}^{j}R_{i_{1}i_{2}}{}^{j_{1}j_{2}}\cdots R_{i_{2a-5}i_{2a-4}}{}^{j_{2a-5}j_{2a-4}}g^{j_{2a-3}\alpha}g^{j_{2a-2}\gamma}\\
 & =\frac{1}{2^{a-2}}\sum_{i,j=1}^{n-1}\delta_{jj_{1}\cdots j_{2a-5}j_{2a-4}\alpha\gamma}^{ii_{1}\cdots i_{2a-5}i_{2a-4}\eta\alpha}L_{i}^{j}R_{i_{1}i_{2}}{}^{j_{1}j_{2}}\cdots R_{i_{2a-5}i_{2a-4}}{}^{j_{2a-5}j_{2a-4}}e^{4u}\\
 & =\sum_{i,j=1}^{n-1}\delta_{jj_{1}\cdots j_{2a-5}j_{2a-4}\alpha\gamma}^{ii_{1}\cdots i_{2a-5}i_{2a-4}\eta\alpha}L_{i}^{j}(A_{i_{1}}^{j_{1}}g_{i_{2}}^{j_{2}}+A_{i_{2}}^{j_{2}}g_{i_{1}}^{j_{1}})\cdots(A_{i_{2a-5}}^{j_{2a-5}}g_{i_{2a-4}}^{j_{2a-4}}+A_{i_{2a-4}}^{j_{2a-4}}g_{i_{2a-5}}^{j_{2a-5}})e^{4u}\\
 & =-2^{a-2}\sum_{i,j=1}^{n-1}\delta_{ji_{1}\cdots i_{a-2}j_{a-1}\cdots j_{2a-5}j_{2a-4}\gamma\alpha}^{ii_{1}\cdots i_{a-2}i_{a-1}\cdots i_{2a-5}i_{2a-4}\eta\alpha}L_{i}^{j}A_{i_{a-1}}^{j_{a-1}}\cdots A_{i_{2a-4}}^{j_{2a-4}}e^{4u}\\
 & =-2^{a-2}\frac{(n-1-a)!}{(n-2a)!}\sum_{i,j=1}^{n-1}\delta_{jj_{a-1}\cdots j_{2a-5}j_{2a-4}\gamma}^{ii_{a-1}\cdots i_{2a-5}i_{2a-4}\eta}L_{i}^{j}A_{i_{a-1}}^{j_{a-1}}\cdots A_{i_{2a-4}}^{j_{2a-4}}e^{4u}\\
% & =-2^{a-2}\frac{(n-1-a)!}{(n-2a)!}(a-1)!T_{a-1,a-2}(A^{T},L)_{\gamma}^{\eta}e^{4u}\\
 & =-2^{a-2}\frac{(n-1-a)!}{(n-2a)!}(a-1)!T_{a-1,a-2}(A^{T},L)^{\eta\gamma}e^{2u}.
\end{align*}
\end{proof}
Recall the definition of $\mathfrak{m}_{a,B}(g)$. To prove that $\mathfrak{m}_{a,B}(g)$ is nonnegative on $(\mathbb{R}^n_+, e^{-2u}g_{\mathbb{E}})$, it suffices to show
\begin{align}
\lim_{R\to\infty}c(n,a)\bigg(\int_{S_{R}^{+}}g_{jk,l}P_a^{ijkl}\nu_{i}\,d\sigma+\int_{S_{R}^{n-2}}g_{n\beta}P_a^{n\beta n\alpha}\mu_{\alpha}\,dl
+(a-1)\int_{S_{R}^{n-2}}Q_{a,a-2}^{\eta\beta\alpha\gamma}g_{\beta\alpha,\gamma}\mu_{\eta}\,dl\bigg)\ge 0.\nonumber
\end{align}

We next compute the boundary integrals
\[
\int_{S_{R}^{+}}g_{jk,l}P_a^{ijkl}\nu_{i}\,d\sigma+\int_{S_{R}^{n-2}}g_{n\beta}P_a^{n\beta n\alpha}\mu_{\alpha}\,dl+(a-1)\int_{S_{R}^{n-2}}Q_{a,a-2}^{\eta\beta\alpha\gamma}g_{\beta\alpha,\gamma}\mu_{\eta}\,dl
\]
in the conformally flat setting.

\begin{proposition}\label{asym behavior}
On AF CF manifolds $(M, g)$ of order $\tau$, we have
\begin{align}
&\lim_{R\to\infty}\int_{S_{R}^{+}}g_{jk,l}P_a^{ijkl}\nu_{i}\,d\sigma
=2^{a-1}(a-1)!\frac{(n-a)!}{(n-2a)!}\lim_{R\to\infty}\int_{S_{R}^{+}}T_{a-1}(D^{2}u)^{il}u_{l}\nu_{i}\,d\sigma,
\end{align}
and
\begin{align}
&\lim_{R\to\infty}\int_{S_{R}^{n-2}}Q_{a,a-2}^{\eta\beta\alpha\gamma}g_{\beta\alpha,\gamma}\mu_{\eta}\,dl
=2^{a-1}(a-2)!\frac{(n-a)!}{(n-2a)!}\lim_{R\to\infty}\int_{S_{R}^{n-2}}u_{n}T_{a-2}(\overline{D}^{2}u)_{\gamma}^{\eta}\mu_{\eta}u^{\gamma}\,dl,
\end{align}
where $(\overline{D}^{2}u)_{\alpha \beta}=u_{\alpha \beta}=\frac{\partial ^2 u}{\partial x_\alpha \partial x_\beta}$.
\end{proposition}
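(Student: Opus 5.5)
We will work directly from the explicit formulas of the preceding Lemma, keeping only the terms that are quadratic in $u$ (first derivatives times second derivatives) and discarding everything of higher order. The first step is to record the asymptotics for $g=e^{-2u}g_{\mathbb{E}}$: $g_{jk,l}=-2u_l e^{-2u}\delta_{jk}$, $A_g=D^2u+O(r^{-2\tau-2})$, and on $\partial\mathbb{R}^n_+$ we have $L_{\alpha\beta}=u_n\delta_{\alpha\beta}+O(r^{-2\tau-1})$. Since the surface measures grow like $R^{n-1}$ and $R^{n-2}$, any error of order $r^{-(a+1)\tau-2a+1}$ in the integrands is harmless, because $\tau>\frac{n-2a}{a+1}$. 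So we may replace $A_g$ by $D^2u$, $A^T$ by $\overline{D}^2u$, and all $e^{\pm ku}$ factors by $1$ inside the limits.

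For the interior integral, note that $g_{jk,l}$ is proportional to $\delta_{jk}$. The contraction $g_{jk,l}P_a^{ijkl}$ therefore equals $-2u_l e^{-2u}\sum_j P_a^{ijjl}$. By \eqref{P^sttm} with $(s,t,m)=(i,j,l)$, this is
\[
-2u_l\cdot\bigl(-2^{a-2}\tfrac{(n-a)!}{(n-2a)!}(a-1)!\,T_{a-1}(A)^{il}\bigr)
= 2^{a-1}(a-1)!\tfrac{(n-a)!}{(n-2a)!}\,T_{a-1}(A)^{il}u_l
\]
up to $e^{ku}$ factors. Replacing $A$ by $D^2u$ then gives the first identity. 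The only real work is sign and index bookkeeping, namely that $\delta_{jk}$ contracts the middle pair $(j,k)$ of $P^{ijkl}$, which is exactly the $P^{sttm}$ configuration of the Lemma.

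For the boundary integral we argue the same way: $g_{\beta\alpha,\gamma}=-2u_\gamma e^{-2u}\delta_{\beta\alpha}$, so $Q_{a,a-2}^{\eta\beta\alpha\gamma}g_{\beta\alpha,\gamma}=-2u_\gamma Q_{a,a-2}^{\eta\alpha\alpha\gamma}$. By \eqref{P^sttm_L} this equals
\[
2^{a-1}\tfrac{(n-1-a)!}{(n-2a)!}(a-1)!\,T_{a-1,a-2}(A^T,L)^{\eta\gamma}u_\gamma .
\]
Next we insert $L=u_n\,\mathrm{Id}$ and expand the mixed Newton tensor. Since $L_i^j=u_n\delta_i^j$, the contraction identity \eqref{basci alge} applies in dimension $n-1$, with $a-2$ copies of $A$ already occupying indices, and it removes the $L$ slot:
\[
\delta^{i_1\cdots i_{a-2}i\eta}_{j_1\cdots j_{a-2}i\gamma}=(n-1-(a-1))\,\delta^{i_1\cdots i_{a-2}\eta}_{j_1\cdots j_{a-2}\gamma}.
\]
Hence $(a-1)!\,T_{a-1,a-2}(A^T,L)^\eta_\gamma=(n-a)\,u_n\,(a-2)!\,T_{a-2}(A^T)^\eta_\gamma$. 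Combining $(n-a)(n-1-a)!=(n-a)!$ then produces exactly the constant $2^{a-1}(a-2)!\frac{(n-a)!}{(n-2a)!}$ in front of $u_nT_{a-2}(\overline{D}^2u)^\eta_\gamma\mu_\eta u^\gamma$.

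The main obstacle will be keeping the normalizations consistent. The $1/(a-1)!$ in the definition of $T_{a-1,a-2}$ versus the $1/(a-2)!$ in $T_{a-2}$ must be matched, the contraction count in \eqref{basci alge} must be applied in the $(n-1)$-dimensional boundary, and the orientation and sign of $L$ relative to $u_n$ must be fixed. We would check the constants against the case $a=2$, where $T_0=\mathrm{Id}$ and both sides reduce to $2\frac{(n-2)!}{(n-4)!}\int u_n u^\eta\mu_\eta$, before trusting the general bookkeeping.
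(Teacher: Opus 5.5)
Your proposal is correct and follows essentially the same route as the paper. You contract $g_{jk,l}=-2u_le^{-2u}\delta_{jk}$ (resp.\ $g_{\beta\alpha,\gamma}=-2u_\gamma e^{-2u}\delta_{\beta\alpha}$) against the Lemma's formulas for $P_a^{sttm}$ and $Q_{a,a-2}^{\eta\alpha\alpha\gamma}$. You then replace $A$, $A^T$, $L$ by $D^2u$, $\overline{D}^2u$, $u_n\,\mathrm{Id}$, with errors that vanish after integration since $(a+1)\tau>n-2a$. Finally you collapse the $L$-slot with the contraction identity to get $(a-1)!\,T_{a-1,a-2}(\overline{D}^2u,u_n\mathrm{Id})=(n-a)u_n(a-2)!\,T_{a-2}(\overline{D}^2u)$, exactly as the paper does.

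One wording correction: the retained terms are of degree $a$ in the derivatives of $u$, not quadratic. This does not affect the argument.
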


\begin{proof}

Since $g=e^{-2u}g_{\mathbb{E}}$, by \eqref{P^sttm} we have
\begin{align*}
g_{jk,l}P_a^{ijkl}\nu_{i}
&=-2e^{-2u}P_a^{ijjl}u_{l}\nu_{i}\\
&=2^{a-1}\frac{(n-a)!}{(n-2a)!}(a-1)!T_{a-1}(A)^{il}u_{l}\nu_{i},
\end{align*}
and hence
\begin{align}
&\lim_{R\to\infty}\int_{S_{R}^{+}}g_{jk,l}P_a^{ijkl}\nu_{i}\,d\sigma\no\\
& =2^{a-1}(a-1)!\frac{(n-a)!}{(n-2a)!}\lim_{R\to\infty}\int_{S_{R}^{+}}T_{a-1}(A)^{il}u_{l}\nu_{i}\,d\sigma\no\\
 & =2^{a-1}(a-1)!\frac{(n-a)!}{(n-2a)!}\lim_{R\to\infty}\int_{S_{R}^{+}}T_{a-1}(D^{2}u)^{il}u_{l}\nu_{i}\,d\sigma,\label{expression of inter contribution}
\end{align}
where the last equality holds because $A=D^2 u+d u \otimes d u-\frac{|D u|^2}{2}\mathbb{I}$ and by the decay assumptions on $u$ at infinity.

By \eqref{P^sttm_L},
\begin{align*}
Q_{a,a-2}^{\eta\beta\alpha\gamma}g_{\beta\alpha,\gamma}\mu_{\eta}
&=-2Q_{a,a-2}^{\eta\alpha\alpha\gamma}\mu_{\eta}u_{\gamma}e^{-2u}\\
&=2^{a-1}\frac{(n-1-a)!}{(n-2a)!}(a-1)!T_{a-1,a-2}(A^{T},L)^{\eta\gamma}\mu_{\eta}u_{\gamma}.
\end{align*}

Then, on $\partial\mathbb{R}_{+}^{n}$, it holds that
\begin{align}
\lim_{R\to\infty}\int_{S_{R}^{n-2}}Q_{a,a-2}^{\eta\beta\alpha\gamma}g_{\beta\alpha,\gamma}\mu_{\eta}\,dl\no
 & =2^{a-1}\frac{(n-1-a)!}{(n-2a)!}(a-1)!\lim_{R\to\infty}\int_{S_{R}^{n-2}}T_{a-1,a-2}(A^{T},L)^{\eta\gamma}\mu_{\eta}u_{\gamma}\,dl\no\\
 & =2^{a-1}\frac{(n-1-a)!}{(n-2a)!}(a-1)!\lim_{R\to\infty}\int_{S_{R}^{n-2}}T_{a-1,a-2}(\overline D^{2}u,u_{n}\mathbb{I})_{\gamma}^{\eta}\mu_{\eta}u^{\gamma}\,dl\no\\
 &=2^{a-1}\frac{(n-a)!}{(n-2a)!}(a-2)!\lim_{R\to\infty}\int_{S_{R}^{n-2}}u_{n}T_{a-2}(\overline{D}^{2}u)_{\gamma}^{\eta}\mu_{\eta}u^{\gamma}\,dl,\label{expression of boundary contribution}
\end{align}
where $L_{\alpha\beta}=e^{-u}u_n\delta_{\alpha\beta}$ on $\partial \mathbb{R}^n_+$, and
\begin{align*}
T_{a-1,a-2}(\overline D^{2}u,u_{n}\mathbb{I})_{\gamma}^{\eta}
& =\frac{u_{n}}{(a-1)!}\delta_{\beta_{1}\cdots\beta_{a-2}\alpha_{a-1}\gamma}^{\alpha_{1}\cdots\alpha_{a-2}\alpha_{a-1}\eta}u_{\alpha_{1}}^{\beta_{1}}\cdots u_{\alpha_{a-2}}^{\beta_{a-2}}\\
& =\frac{(n-a)u_{n}}{a-1}T_{a-2}(\overline{D}^{2}u)_{\gamma}^{\eta}.
\end{align*}

\end{proof}

\begin{thm}\label{one expression of mass}
On AF CF manifolds $(M, g)$ of order $\tau$, assume that $g\in \overline\Gamma_a^+$ and $\int_{M}L_{a}dv_{g}+2a\int_{\partial M}\mathscr{B}^{a-1}d\sigma_{g}$ is finite. Then
\begin{align}
\mathfrak{m}_{a,B}(g)=&\frac{(a-1)!(n-a)!}{2(n-1)!\omega_{n-1}}\bigg(a\int_{\mathbb{R}^n_+\setminus\Omega}\sigma_{a}(D^{2}u)\,dx+a\int_{\mathbb{R}^{n-1}\setminus\Omega'}\sigma_{a-1}(\overline{D}^{2}u)u_{n}\,dx\no\\
 &+\int_{\partial\Omega'}T_{a-2}(\overline{D}^{2}u)^{\alpha\beta}u_{n}u_{\alpha}\mu_{\beta}\,dl+\int_{\partial\Omega}T_{a-1}(D^{2}u)^{ij}u_{i}\nu_{j}\,d\sigma\bigg).
\end{align}
\end{thm}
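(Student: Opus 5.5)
Starting from the definition of $\mathfrak{m}_{a,B}(g)$, Proposition~\ref{asym behavior} rewrites the hemisphere flux and the $Q$-term on $S_R^{n-2}$ in terms of $u$. The remaining term $\int_{S_R^{n-2}} g_{n\beta}P_a^{n\beta n\alpha}\mu_\alpha\,dl$ vanishes identically, since $g=e^{-2u}g_{\mathbb{E}}$ gives $g_{n\beta}=0$. With $c(n,a)=\frac{(n-2a)!}{2^a(n-1)!\omega_{n-1}}$, the constants $2^{a-1}(a-1)!\frac{(n-a)!}{(n-2a)!}$ and $(a-1)\cdot2^{a-1}(a-2)!\frac{(n-a)!}{(n-2a)!}$ collapse to the common prefactor $\frac{(a-1)!(n-a)!}{2(n-1)!\omega_{n-1}}$. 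We are therefore reduced to the Euclidean identity
\[
\lim_{R\to\infty}\Bigl(\int_{S_R^+}T_{a-1}(D^2u)^{il}u_l\nu_i\,d\sigma+\int_{S_R^{n-2}}u_nT_{a-2}(\overline D^2u)^{\eta}_{\gamma}\mu_\eta u^\gamma\,dl\Bigr)
\]
being equal to the bracket on the right-hand side of the theorem.

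\textbf{Interior term.} I use the Euclidean divergence structure $a\,\sigma_a(D^2u)=\partial_i\bigl(T_{a-1}(D^2u)^{il}u_l\bigr)$, which holds because $T_{a-1}(D^2u)$ is divergence free. Integrating over $B_R^+\setminus\Omega$ gives
\[
\int_{B_R^+\setminus\Omega}a\,\sigma_a(D^2u)\,dx=\int_{S_R^+}T_{a-1}u\,\nu-\int_{B_R^{n-1}\setminus\Omega'}T_{a-1}(D^2u)^{nl}u_l\,dx-\int_{\partial\Omega}T_{a-1}(D^2u)^{ij}u_i\nu_j\,d\sigma.
\]
Here $\nu$ on $\partial\Omega$ points out of $\Omega$, matching the sign convention of the statement.

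\textbf{Flat boundary term.} On $\{x^n=0\}$ I split $T_{a-1}(D^2u)^{nl}u_l=T^{nn}_{a-1}u_n+T^{n\alpha}_{a-1}u_\alpha$. For the first piece, $T^{nn}_{a-1}(D^2u)=\sigma_{a-1}(\overline D^2u)$. For the second, I mimic Claim \eqref{tangential equality} from the graph case with $f_i/w$ replaced by $u_i$. Expanding the Kronecker symbol and pulling $\partial_{\beta}u_n$ out as a derivative gives
\[
T_{a-1}(D^2u)^{n}_{\alpha}u_\alpha=-\partial_\beta\bigl(T_{a-2}(\overline D^2u)^\beta_\alpha u_nu_\alpha\bigr)+(a-1)\sigma_{a-1}(\overline D^2u)u_n.
\]
Third derivatives cancel by the symmetry of $u_{\beta\gamma\delta}$, and the divergence-free property of $T_{a-2}(\overline D^2u)$ on $\mathbb{R}^{n-1}$ is used. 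Integrating over $B_R^{n-1}\setminus\Omega'$ yields the contributions $a\int\sigma_{a-1}(\overline D^2u)u_n$, the circle term on $S_R^{n-2}$, and a term on $\partial\Omega'$ with co-normal $\mu$. Moving these to the appropriate sides produces exactly the combination above.

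\textbf{Passing to the limit.} As $R\to\infty$, the hypotheses $g\in\overline\Gamma_a^+$ and finiteness of $\int L_a+2a\int\mathscr{B}^{a-1}$ are used, via the proposition expressing this functional through $\sigma_a(A_g)$ and $\sigma_{a,a-1}(A^T,L)$. Together with the decay $\tau>\frac{n-2a}{a+1}$, they give convergence of the bulk and boundary integrals. The difference between $\sigma_a(A)$ and $\sigma_a(D^2u)$ is of order $r^{-(a+1)\tau-2a}$, which is integrable under this decay.

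\textbf{Main obstacle.} I expect the sign and orientation bookkeeping on $\partial\Omega'$ to be the main obstacle. The co-normal $\mu$ there must be oriented as the boundary of $\partial\mathbb{R}^n_+\setminus\Omega'$, consistent with the $\nu$ convention on $\partial\Omega$. I also need to check that the tangential-divergence identity holds with $T_{a-2}(\overline D^2u)$ (only tangential indices) rather than a mixed tensor. I would first verify both points for $a=2$ by direct expansion and then carry the general Kronecker-symbol computation along the lines of \eqref{eq:third derivative vanishing}.
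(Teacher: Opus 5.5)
Your proposal is correct and follows the paper's proof essentially step for step. The shared steps are: Proposition~\ref{asym behavior} for the hemisphere and $Q$-flux terms, with the $g_{n\beta}P_a^{n\beta n\alpha}$ term dropping out because $g_{n\beta}=0$ (the paper uses this tacitly); the divergence identity $a\,\sigma_a(D^2u)=\partial_i\bigl(T_{a-1}(D^2u)^{ij}u_j\bigr)$ on $B_R^+\setminus\Omega$; and the splitting $T_{a-1}(D^2u)^{nl}u_l=\sigma_{a-1}(\overline{D}^{2}u)u_n-T_{a-2}(\overline{D}^{2}u)^{\alpha\beta}u_{\beta n}u_\alpha$, followed by integration by parts on $B_R^{n-1}\setminus\Omega'$ using $\partial_\beta T_{a-2}(\overline{D}^{2}u)^{\alpha\beta}=0$.

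The only point to phrase more carefully is the limit. Here $u_n$ has no sign, so the hypotheses (through the existence of $\mathfrak{m}_{a,B}(g)$) only guarantee convergence of the combined $R$-truncated right-hand side. The two improper integrals in the statement should therefore be read jointly, not as separately convergent.
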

\begin{proof}
Since $\partial_{\beta}T_{a-2}(\overline{D}^{2}u)^{\alpha\beta}=0$,
\begin{align}
    T_{a-2}(\overline{D}^{2}u)^{\alpha\beta}u_{\beta n}u_{\alpha}\no
=&\partial_{\beta}\left(T_{a-2}(\overline{D}^{2}u)^{\alpha\beta}u_{ n}u_{\alpha}\right)-\partial_{\beta}\left(T_{a-2}(\overline{D}^{2}u)^{\alpha\beta}u_{\alpha}\right)u_{ n}\no\\
=&\partial_{\beta}\left(T_{a-2}(\overline{D}^{2}u)^{\alpha\beta}u_{ n}u_{\alpha}\right)-T_{a-2}(\overline{D}^{2}u)^{\alpha\beta}u_{\alpha\beta}u_n\no\\
=&\partial_{\beta}\left(T_{a-2}(\overline{D}^{2}u)^{\alpha\beta}u_{ n}u_{\alpha}\right)-(a-1)\sigma_{a-1}(\overline{D}^{2}u)u_{n}.
\end{align}
Then, 
\begin{align}
 \int_{B_{R}^{n-1}\backslash\Omega'}T_{a-1}(D^{2}u)^{in}u_{i}\no
= & \int_{B_{R}^{n-1}\backslash\Omega'}T_{a-1}(D^{2}u)^{\alpha n}u_{\alpha}+\int_{B_{R}^{n-1}\backslash\Omega'}\sigma_{a-1}(\overline{D}^{2}u)u_{n}\no\\
= & -\int_{B_{R}^{n-1}\backslash\Omega'}T_{a-2}(\overline{D}^{2}u)^{\alpha\beta}u_{\beta n}u_{\alpha}+\int_{B_{R}^{n-1}\backslash\Omega'}\sigma_{a-1}(\overline{D}^{2}u)u_{n}\no\\
%= & -\int_{\partial B_{R}^{n-1}}T_{a-2}(\overline{D}^{2}u)^{\alpha\beta}u_{n}u_{\alpha}\mu_{\beta}+\int_{\partial\Omega'}T_{a-2}(\overline{D}^{2}u)^{\alpha\beta}u_{n}u_{\alpha}\mu_{\beta}\no\\
% & +\int_{B_{R}^{n-1}\backslash\Omega'}T_{a-2}(\overline{D}^{2}u)^{\alpha\beta}u_{n}u_{\alpha\beta}+\int_{B_{R}^{n-1}\backslash\Omega'}\sigma_{a-1}(\overline{D}^{2}u)u_{n}\no\\
= & -\int_{\partial B_{R}^{n-1}}T_{a-2}(\overline{D}^{2}u)^{\alpha\beta}u_{n}u_{\alpha}\mu_{\beta}+\int_{\partial\Omega'}T_{a-2}(\overline{D}^{2}u)^{\alpha\beta}u_{n}u_{\alpha}\mu_{\beta}\no\\&+a\int_{B_{R}^{n-1}\backslash\Omega'}\sigma_{a-1}(\overline{D}^{2}u)u_{n},\label{boundary term}
\end{align}
where $T_{a-1}(D^2u)^{\alpha n}=-T_{a-2}(\overline D^2u)^{\alpha \beta}u_{\beta n}$ by the definition of $T_{a-1}(D^2u)^{\alpha n}$.
From the divergence structure
\[
\sigma_{a}(D^{2}u)=\frac{1}{a}\bigl(T_{a-1}(D^2 u)^{ij}u_j\bigr)_i
\]
and \eqref{boundary term}, we get
\begin{align*}
  a\int_{B_{R}^{+}\backslash\Omega}\sigma_{a}(D^{2}u)
= & \int_{S_{R}^{+}}T_{a-1}(D^{2}u)^{ij}u_{i}\nu_{j}-\int_{\partial\Omega}T_{a-1}(D^{2}u)^{ij}u_{i}\nu_{j} -\int_{B_{R}^{n-1}\backslash\Omega'}T_{a-1}(D^{2}u)^{in}u_{i}\\
= & \int_{S_{R}^{+}}T_{a-1}(D^{2}u)^{ij}u_{i}\nu_{j}-\int_{\partial\Omega}T_{a-1}(D^{2}u)^{ij}u_{i}\nu_{j}-a\int_{B_{R}^{n-1}\backslash\Omega'}\sigma_{a-1}(\overline{D}^{2}u)u_{n}\\
 & +\int_{\partial B_{R}^{n-1}}T_{a-2}(\overline{D}^{2}u)^{\alpha\beta}u_{n}u_{\alpha}\mu_{\beta}-\int_{\partial\Omega'}T_{a-2}(\overline{D}^{2}u)^{\alpha\beta}u_{n}u_{\alpha}\mu_{\beta},
\end{align*}
where $\nu$ on $\partial \Omega$ and $\mu$ on $\partial \Omega'$ are the  normal vector  pointing outward of $\Omega$ and $\Omega'$, respectively.
Then, 
\begin{align*}
 a\int_{B_{R}^{+}\backslash\Omega}\sigma_{a}(D^{2}u)+a\int_{B_{R}^{n-1}\backslash\Omega'}\sigma_{a-1}(\overline{D}^{2}u)u_{n}\no
 &+\int_{\partial\Omega'}T_{a-2}(\overline{D}^{2}u)^{\alpha\beta}u_{n}u_{\alpha}\mu_{\beta} +\int_{\partial\Omega}T_{a-1}(D^{2}u)^{ij}u_{i}\nu_{j}\\
= & \int_{S_{R}^{+}}T_{a-1}(D^{2}u)^{ij}u_{i}\nu_{j}
 +\int_{\partial B_{R}^{n-1}}T_{a-2}(\overline{D}^{2}u)^{\alpha\beta}u_{n}u_{\alpha}\mu_{\beta}.
\end{align*}

By Proposition~\ref{asym behavior}, we have
\begin{align}
&\lim_{R\to\infty}\bigg(\int_{S_{R}^{+}}g_{jk,l}P_a^{ijkl}\nu_{i}\,d\sigma
+(a-1)\int_{S_{R}^{n-2}}Q_{a,a-2}^{\eta\beta\alpha\gamma}g_{\beta\alpha,\gamma}\mu_{\eta}\,dl\bigg)\no\\
=&\,2^{a-1}(a-1)!\frac{(n-a)!}{(n-2a)!}\bigg(a\int_{B_{R}^{+}\setminus\Omega}\sigma_{a}(D^{2}u)+a\int_{B_{R}^{n-1}\setminus\Omega'}\sigma_{a-1}(\overline{D}^{2}u)u_{n}\no\\
 &+\int_{\partial\Omega'}T_{a-2}(\overline{D}^{2}u)^{\alpha\beta}u_{n}u_{\alpha}\mu_{\beta} +\int_{\partial\Omega}T_{a-1}(D^{2}u)^{ij}u_{i}\nu_{j}\bigg).\label{intermediate expression}
\end{align}
\end{proof}
\iffalse
\begin{thm}\label{lower bound 1}
On AF CF manifolds $(M, g)$ of order $\tau$, assume that $g\in \overline\Gamma_a^+$ and $\int_{M}L_{a}dv_{g}+2a\int_{\partial M}\mathscr{B}^{a-1}d\sigma_{g}$ is finite. Let $u$ be constant on $\partial \Omega$. Then,
\begin{align*}
\mathfrak{m}_{a,B}(g)&\ge \frac{a!(n-a)!}{2(n-1)!\omega_{n-1}}\bigg(\int_{\mathbb{R}^n_+\backslash\Omega}\sigma_a(A)+\int_{\mathbb{R}^{n-1}\backslash\Omega'}\sum_{k=0}^{a-1}C_2(n,a,k)\sigma_k(A^T)u_n^{2a-2k-1}
\bigg)\no\\
&+\frac{(a-1)!(n-a)!}{2(n-1)!\omega_{n-1}}\bigg(\int_{\partial\Omega'} \sigma_{a-2}(\kappa_{\partial \Omega'})|\overline{D}u|^{a-2}\langle\overline{D} u,\mu\rangle u_n dl \no\\
&+\int_{\partial\Omega} \sigma_{a-1}(\kappa_{\partial \Omega})|Du|^{a-1}\langle D u,\nu\rangle d\sigma\bigg)\no\\
&+\frac{(n-2 a)}{2^{a+1} \omega_{n-1}}\int_{\mathbb{R}^n_+\backslash\Omega}|Du|^{2 a}+\frac{a(n-2a+1)}{2^{a}(n-1)\omega_{n-1}}\int_{\mathbb{R}^{n-1}\backslash\Omega'}|\overline{D} u|^{2 (a-1)}u_n\no
\end{align*}
where $C_2(n,a,k)=\frac{(n-1-k)!}{(2a-2k-2)!!(n-a)!}$, and $\nu$ and $\mu$ are unit outer normal vectors of $\partial \Omega$ and $\partial \Omega'$ respectively.
\end{thm}

\fi

We now provide a Penrose-type inequality.
\begin{thm}\label{lower bound of mass}
Assume the hypotheses of Theorem~\ref{thm on CF}. Suppose in addition that $u$ is constant on $\partial\Omega$. Then
\begin{align}
\mathfrak{m}_{a,B}(g)
\ge& \,\frac{a!(n-a)!}{2(n-1)!\omega_{n-1}}\bigg(\int_{\mathbb{R}^n_+\setminus\Omega}\sigma_a(g^{-1}A_g)e^{(n-2a)u}dv_{g}\no\\
&+\int_{\mathbb{R}^{n-1}\setminus\Omega'}\sum_{k=0}^{a-1}C_2(n,a,k)\sigma_k(g^{-1}A^T_g)h_g^{2a-2k-1}e^{(n-2a)u}d\sigma_{g}\bigg)
\no\\
&+\frac{(a-1)!(n-a)!}{2(n-1)!\omega_{n-1}}\bigg(\int_{\partial\Omega} \sigma_{a-1}(\kappa_{\partial \Omega})\left(\frac{\sigma_1(\kappa_{\partial \Omega})}{n-1}\right)^a d\sigma\no\\
&+\int_{\partial\Omega'} \sigma_{a-2}(\kappa_{\partial \Omega'})\left(\frac{\sigma_1(\kappa_{\partial \Omega})}{n-1}\right)^a\sin^{a-1} \theta\cos \theta \,dl\bigg)\no \\
&+\frac{(n-2 a)}{2^{a+1} \omega_{n-1}}\int_{\mathbb{R}^n_+\setminus\Omega}|\nabla_g u|^{2 a}e^{(n-2a)u}dv_{g}\\
&+\frac{a(n-2a+1)}{2^{a}(n-1)\omega_{n-1}}\int_{\mathbb{R}^{n-1}\setminus\Omega'}|\overline{\nabla}_g u|^{2 (a-1)}h_ge^{(n-2a)u} d\sigma_{g}.\no
\end{align}
Here $C_2(n,a,k)=\frac{(n-1-k)!}{(2a-2k-2)!!(n-a)!}$ and $\theta$ is the angle formed by the tangent plane of $\partial\Omega$ at $\partial\Omega'$ and $x_n=0$, satisfying $0<\theta<\pi/2$.
\end{thm}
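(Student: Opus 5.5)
Our starting point is Theorem~\ref{one expression of mass}, which writes $\mathfrak{m}_{a,B}(g)$ as $\frac{(a-1)!(n-a)!}{2(n-1)!\omega_{n-1}}$ times four pieces: the bulk term $a\int\sigma_a(D^2u)$, the flat-boundary term $a\int\sigma_{a-1}(\overline D^2u)u_n$, the corner term on $\partial\Omega'$, and the horizon term on $\partial\Omega$. We will bound each piece from below separately.

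\emph{Bulk term.} We expand $\sigma_a(D^2u)$ in terms of $A=D^2u+Du\otimes Du-\frac{|Du|^2}{2}\mathbb{I}$. The intended identity is
\[
\sigma_a(D^2u)=\sigma_a(A)+\frac{n-2a}{2^a a!}\,\frac{(n-1)!\,a!}{(n-a)!\,a!}\,|Du|^{2a}\cdot(\text{const})+\mathrm{div}(\cdots)+(\text{terms } \ge 0 \text{ when } A\in\overline\Gamma_a^+),
\]
in the spirit of Ge--Wang--Wu \cite{GWW2}. Concretely, we write $D^2u=A-Du\otimes Du+\frac{|Du|^2}{2}\mathbb{I}$ and expand $\sigma_a$ of a rank-one-plus-scalar perturbation. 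The terms $\sigma_k(A)|Du|^{2(a-k)}$ and $T_{k}(A)(Du,Du)|Du|^{2(a-k-1)}$ appear with explicit coefficients. Since $A\in\overline\Gamma_a^+$, we have $\sigma_k(A)\ge0$ and $T_{k}(A)\ge0$ for $k<a$, so we discard all intermediate terms except $\sigma_a(A)$ and the pure $|Du|^{2a}$ term. The latter has coefficient proportional to $(n-2a)$ after using \eqref{basci alge}.

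We then convert to $g$-quantities via $\sigma_a(A)\,dx=\sigma_a(g^{-1}A_g)e^{(n-2a)u}dv_g$ and $|Du|^{2a}dx=|\nabla_gu|^{2a}e^{(n-2a)u}dv_g$. Both follow from $g=e^{-2u}g_{\mathbb{E}}$: we have $dv_g=e^{-nu}dx$, $g^{-1}A=e^{2u}A$, and $|\nabla_g u|^2=e^{2u}|Du|^2$.

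\emph{Flat boundary term.} The same expansion on $\partial\mathbb{R}^n_+$ uses $A^T=\overline D^2u+\overline Du\otimes\overline Du-\frac{|Du|^2}{2}\mathbb{I}$ with $|Du|^2=|\overline Du|^2+u_n^2$. The $u_n^2$ part of the scalar shift produces the powers $u_n^{2a-2k-1}$, which give the sum $\sum_k C_2(n,a,k)\sigma_k(A^T)u_n^{2a-2k-1}$. The $|\overline Du|$ part is handled as in the bulk: we drop the $T_k(A^T)$-terms using $h_g=e^uu_n\ge0$ and $\lambda(A^T)$ lying in the appropriate cone, which requires checking that $\overline\Gamma_a^+$ restricts suitably, and we keep the $|\overline Du|^{2(a-1)}u_n$ term. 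Conversions use $h_g=e^uu_n$ and $d\sigma_g=e^{-(n-1)u}dx$.

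\emph{Horizon and corner terms.} Since $u$ is constant on $\partial\Omega$, $Du=|Du|\nu$ there (up to sign), so $T_{a-1}(D^2u)^{ij}u_i\nu_j=\sigma_{a-1}(\kappa_{\partial\Omega})|Du|^{a}$, exactly as in the graphical case. The minimality of $\partial\Omega$ in $g$ gives $H_g=e^{u}(H_{\mathbb E}-(n-1)u_\nu)=0$, i.e.\ $|Du|=\sigma_1(\kappa_{\partial\Omega})/(n-1)$; we fix the sign using strict mean convexity. On $\partial\Omega'$, $\overline Du$ is normal to $\partial\Omega'$ in $\partial\mathbb{R}^n_+$ with $|\overline Du|=|Du|\sin\theta$ and $u_n=|Du|\cos\theta$. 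This gives $\sigma_{a-2}(\kappa_{\partial\Omega'})|Du|^a\sin^{a-1}\theta\cos\theta$, and we substitute $|Du|$ again. The hypothesis $h_g\ge0$ gives $\cos\theta\ge0$.

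\emph{Main obstacle.} The hard step is the algebraic expansion with the exact constants, particularly in the boundary term. There the mixed structure ($T_{a-2}$ of the tangential Hessian times $u_n$, with the $u_n^2$ part inside $|Du|^2$) must be reorganized into the sum $\sum_k C_2\sigma_k(A^T)u_n^{2a-2k-1}$ plus a nonnegative remainder. The first approach is to compute $\sigma_{a-1}(A^T+u_n^2\mathbb{I}/2-\cdots)$ by the binomial formula $\sigma_m(B+t\mathbb{I})=\sum_k\binom{n-1-k}{m-k}t^{m-k}\sigma_k(B)$ and then match coefficients.
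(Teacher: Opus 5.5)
Your overall architecture matches the paper's. You start from Theorem~\ref{one expression of mass}, bound the bulk and flat-boundary integrands pointwise, and evaluate the horizon and corner terms from the level-set structure ($\nu=Du/|Du|$, $|Du|=\sigma_1(\kappa_{\partial\Omega})/(n-1)$, $|\overline Du|=|Du|\sin\theta$, $u_n=|Du|\cos\theta$). You then convert to $g$ via $dv_g=e^{-nu}dx$, $g^{-1}A_g=e^{2u}A$, $h_g=e^uu_n$. All of that is fine.

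\textbf{The gap is in the pointwise bound, which is the heart of the proof.} Writing $D^2u=(A+t\mathbb{I})-Du\otimes Du$ and using the rank-one formula, you get
\[
\sigma_a(D^2u)=\sum_{k=0}^{a}\binom{n-k}{a-k}t^{a-k}\sigma_k(A)-\sum_{k=0}^{a-1}\binom{n-1-k}{a-1-k}t^{a-1-k}\,T_k(A)(Du,Du),\qquad t=\tfrac{|Du|^2}{2}.
\]
So the $T_k(A)(Du,Du)$ terms enter with a minus sign, and knowing $T_k(A)\ge0$ works against you: you cannot discard them. For each $1\le k\le a-1$, the pair consisting of the $\sigma_k(A)$ term and the $T_k(A)$ term is in fact nonnegative. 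However, this does not follow from $\sigma_k(A)\ge0$ and $T_k(A)\ge0$ separately. For example, when $a=2$ the $k=1$ pair equals $\frac{n-3}{2}\sigma_1(A)|Du|^2+A(Du,Du)$. Its nonnegativity on $\overline\Gamma_2^+$ needs the eigenvalue bound $\lambda_{\min}(A)\ge-\frac{n-2}{n}\sigma_1(A)$ together with $n\ge4$. In general each pair is a mixed discriminant of $A$ and $B=\frac{|Du|^2}{2}\mathbb{I}-Du\otimes Du$, and its sign comes from G\aa{}rding's inequality.

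\textbf{The fix is the paper's superadditivity step.} The eigenvalues of $B$ are $\frac{|Du|^2}{2}(1,\dots,1,-1)$, so $B\in\Gamma_a^+$ since $a<n/2$. Concavity and $1$-homogeneity of $\sigma_a^{1/a}$ on the cone then give $\sigma_a(A+B)\ge\sigma_a(A)+\sigma_a(B)$. The term $\sigma_a(B)$ produces the $(n-2a)|Du|^{2a}$ contribution.

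\textbf{The same defect affects your flat-boundary step.} The paper writes $\overline D^2u=(A^T+\frac{u_n^2}{2}\mathbb{I})+\overline B$ with $\overline B=\frac{|\overline Du|^2}{2}\mathbb{I}-\overline Du\otimes\overline Du$. Here $\overline B\in\Gamma_{a-1}^+$ because $a-1<\frac{n-1}{2}$, and $A^T\in\overline\Gamma_{a-1}^+$ because it is the restriction of $A\in\overline\Gamma_a^+$ to a hyperplane. The paper then:
\begin{itemize}
\item applies superadditivity of $\sigma_{a-1}$;
\item keeps $\sigma_{a-1}(A^T+\frac{u_n^2}{2}\mathbb{I})$ intact and expands it by the binomial formula to obtain the $C_2$-sum exactly;
\item multiplies by $u_n\ge0$.
\end{itemize}
Only the cross terms between $A^T+\frac{u_n^2}{2}\mathbb{I}$ and $\overline B$ are dropped. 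Those cross terms again contain $-T_k(\cdot)(\overline Du,\overline Du)$ contributions, so dropping them needs the same G\aa{}rding/superadditivity argument, not termwise signs.

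Finally, remove the $\mathrm{div}(\cdots)$ from your ``intended identity.'' The bound must be pointwise; a divergence term would generate extra integrals over $\partial\Omega$, $\partial\mathbb{R}^n_+$ and $S_R^+$ that your bookkeeping never accounts for.
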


\begin{proof}
Now we utilize the Schouten tensor $A$ and the tangential part $A^T$ to obtain the lower bound of $a\int_{B_{R}^{+}\backslash\Omega}\sigma_{a}(\nabla^{2}u)+a\int_{B_{R}^{n-1}\backslash\Omega'}\sigma_{a-1}(\overline{D}^{2}u)u_{n}$.

Denote
\[
B=\frac{|Du|^2}{2} I_{n\times n}-{D} u \otimes {D} u.
\]
Then $B\in \Gamma_k^{+}$ for any $k<n/2$, and
\[
\sigma_j(B)=\frac{(n-1)!(n-2 j)}{2^j j!(n-j)!}|Du|^{2 j}\qquad \text{for any } j<n/2.
\]
Thus, by $A\in \Gamma_a^{+} $ and $B\in \Gamma_a^{+}$, we obtain
\begin{align}\label{lower bound of sigma Hessian}
\sigma_a\left(D^2 u\right) & =\sigma_a(A+B)  \geq \sigma_a(A)+\sigma_a(B)=\sigma_a(A)+\frac{(n-1)!(n-2 a)}{2^a a!(n-a)!}|Du|^{2 a}.
\end{align}
Denote
\[
\overline{B}=\frac{|\overline{D} u|^2}{2} I_{(n-1)\times (n-1)}-\overline{D} u \otimes \overline D u,
\]
and note that $\overline{B}\in \Gamma_k^{+}$ for any $k< \frac{n-1}{2}$.

It follows that
$$A^T=\overline{D}^{2}u-\overline{B}-\frac{u_n^2}{2}I_{(n-1)\times (n-1)}$$ and
\begin{align}
\sigma_{a-1}\left(\overline{D}^{2}u\right)
 & =\sigma_{a-1}(A^T+\overline{B}+\frac{u_n^2}{2}I_{(n-1)\times (n-1)}) \no\\
& \geq \sigma_{a-1}(A^T+\frac{u_n^2}{2}I_{(n-1)\times (n-1)})+\sigma_{a-1}(\overline{B})\no\\
&=\sigma_{a-1}(A^T+\frac{u_n^2}{2}I_{(n-1)\times (n-1)})+\frac{(n-2)!(n-2a+1)}{2^{a-1}(a-1)!(n-a)!}|\overline{D} u|^{2 (a-1)}.\label{sigma{a-1}nabla}
\end{align}
Note that
\begin{align}\label{lower bound of sigma-1 tangential Hessian}
\sigma_{a-1}(A^T+\frac{u_n^2}{2}I_{(n-1)\times (n-1)})
=&\frac{1}{(a-1)!}\delta^{\alpha_1\cdots \alpha_{a-1}}_{\beta_1\cdots \beta_{a-1}}(A_{\alpha_ 1}^{\beta_1}+\frac{u_n^2}{2}\delta_{\alpha_1}^{\beta_1})\cdots(A_{\alpha_{a-1}}^{\beta_{a-1}}+\frac{u_n^2}{2}\delta_{\alpha_{a-1}}^{\beta_{a-1}})\no\\
=&\sum_{k=0}^{a-1}\frac{C_{a-1}^k}{(a-1)!}\delta^{\alpha_1\cdots \alpha_{a-1}}_{\beta_1\cdots \beta_{a-1}}A_{\alpha_ 1}^{\beta_1}\cdots A_{\alpha_ {k}}^{\beta_k}\delta_{\alpha_{k+1}}^{\beta_{k+1}}\cdots \delta_{\alpha_{a-1}}^{\beta_{a-1}}(\frac{u_n^2}{2})^{a-1-k}\no\\
=&\sum_{k=0}^{a-1}\frac{C_{a-1}^k}{(a-1)!}(\frac{u_n^2}{2})^{a-1-k}\delta^{\alpha_1\cdots\alpha_k\alpha_{k+1}\cdots \alpha_{a-1}}_{\beta_1\cdots \beta_k\alpha_{k+1}\cdots\alpha_{a-1}}A_{\alpha_ 1}^{\beta_1}\cdots A_{\alpha_ {k}}^{\beta_k}\no\\
=&\sum_{k=0}^{a-1}\frac{C_{a-1}^k}{(a-1)!}(\frac{u_n^2}{2})^{a-1-k}\frac{(n-1-k)!}{(n-a)!}k!\sigma_k(A^T)\no\\
=&\sum_{k=0}^{a-1}\frac{(n-1-k)!}{(2a-2k-2)!!(n-a)!}\sigma_k(A^T)u_n^{2(a-1-k)}.
\end{align}
Substituting \eqref{lower bound of sigma-1 tangential Hessian} into \eqref{sigma{a-1}nabla}, we get
\begin{align}
\sigma_{a-1}\left(\overline{D}^{2}u\right)
\ge &\sum_{k=0}^{a-1}\frac{(n-1-k)!}{(2a-2k-2)!!(n-a)!}\sigma_k(A^T)u_n^{2(a-1-k)}+\frac{(n-2)!(n-2a+1)}{2^{a-1}(a-1)!(n-a)!}|\overline{D} u|^{2 (a-1)}.\label{lower bound of tangential hessian u}
\end{align}
Let $\nu$ be the unit outer normal vector of $\partial \Omega$ pointing into $\mathbb{R}^n\setminus\Omega$, and let $\mu$ be the unit outer normal vector of $\partial \Omega'$ pointing into $\mathbb{R}^{n-1}\setminus\Omega'.$
Recall that $h_{\alpha \beta}$ is the second fundamental form of $\partial \Omega\subset\mathbb{R}^n$, i.e., $h_{\alpha \beta}=\langle D_{e_\alpha}e_\beta, -\nu\rangle$, where $e_\alpha,e_\beta$ are unit tangent vectors on $\partial \Omega$. We denote its principal curvatures by $\kappa_{\partial \Omega}$ (so $\sigma_1(\kappa_{\partial \Omega})=\sigma_1(h_{\alpha \beta})$). Likewise, let $h'_{\alpha \beta}=\langle D_{e'_\alpha}e'_\beta, -\mu\rangle$ be the second fundamental form of $\partial \Omega'\subset\mathbb{R}^{n-1}$, where $e'_\alpha,e'_\beta$ are unit tangent vectors on $\partial \Omega'$, and denote its principal curvatures by $\kappa_{\partial \Omega'}$.

Since $\partial M\setminus\partial \mathbb{R}^n_+$ is a horizon of $M$, the mean curvature satisfies $H_{\partial M}=0$ on $\partial M\setminus\partial \mathbb{R}^n_+$. Under the conformal change this gives
\[
\sigma_1(\kappa_{\partial \Omega})-(n-1)\langle D u, \nu\rangle=0\qquad\text{on }\partial \Omega.
\]
Since $\sigma_1(\kappa_{\partial \Omega})>0$, we have $\langle D u, \nu\rangle>0$ on $\partial \Omega$, and hence $\nu=\frac{Du}{|Du|}$ when $u$ is constant on $\partial \Omega$.

Along $\partial \Omega'$, let $\theta$ be the angle formed by the tangent plane of $\partial\Omega$ at $\partial\Omega'$ and $x_{n}=0$. Then
\[
\nu= \mu \sin\theta+e_n \cos \theta.
\]
In particular, on $\partial \Omega'$ we compute
\begin{align*}
\langle {D} u, \nu\rangle
&=\langle \overline{D}u+u_n e_n, \nu\rangle
=\sin\theta\langle \overline{D}u,\mu\rangle+u_n\cos \theta.
\end{align*}
Since $u$ is constant on $\partial \Omega$, we have $u_n=u_{\nu}\cos\theta=|Du|\cos \theta$ on $\partial \Omega'$ and
\[
|Du|=\langle {D} u, \nu\rangle=\sin\theta\langle \overline{D}u,\mu\rangle+|Du|(\cos \theta)^2.
\]
It follows that $\langle\overline{D}u,\mu\rangle=|Du|\sin \theta$ and hence
$|\overline D u|=|Du|\,|\sin \theta|=\frac{\sigma_1(\kappa_{\partial \Omega})}{n-1}|\sin \theta|$ on $\partial \Omega'$.

Since the mean curvature $h_g$ of $\partial M$ is nonnegative, we have $u_n\ge 0$, and since $u_n=|Du|\cos\theta$, it follows that $0< \theta\le \pi/2$. 
On $\partial \Omega$, $D^2u(e_\alpha,e_\beta)=h_{\alpha\beta} u_{\nu}$.
Since $g\in \overline \Gamma_a^+$, we know that $A_{\alpha \beta}=D^2u(e_\alpha,e_\beta)-\frac{|Du|^2}{2}\delta_{\alpha\beta}\in \overline \Gamma_{a-1}^+$ and then $h_{\alpha\beta} u_{\nu}\in \overline \Gamma_{a-1}^+$. Due to $u_\nu>0$ on $\partial \Omega$, $h_{\alpha\beta}\in \overline\Gamma_{a-1}^+$ and then $\sigma_{a-1}(\kappa_{\partial \Omega})\ge 0$. And as $\langle \overline Du,\mu\rangle=|Du|\sin\theta>0$, we have $\mu=\frac{\overline D u}{|\overline D u|}$ on $\partial \Omega'$. Since $\{A_g(\frac{\partial}{\partial x^{\alpha}},\frac{\partial}{\partial x^{\beta}})\}_{(n-1)\times(n-1)}\in \overline\Gamma_{a-1}^+$ on $\partial \mathbb{R}^n_+$, by similar argument as $\partial \Omega$, we know that $h'_{\alpha\beta}\in \overline\Gamma_{a-2}^+$ on $\partial \Omega'$, where $h'_{\alpha\beta}$ is the second fundamental form of $\partial \Omega'$ in $\partial \mathbb{R}^n.$

Then, with $\nu=\frac{Du}{|Du|}$ and $\mu=\frac{\overline D u}{|\overline Du|}$, by arguments similar to \eqref{eq:formula 1-1} and \eqref{eq:formula 1-1-1}, we obtain
\begin{equation}\label{boundary curvature}
\int_{\partial\Omega}T_{a-1}(D^{2}u)^{ij}u_{i}\nu_{j}=\int_{\partial\Omega} \sigma_{a-1}(\kappa_{\partial \Omega})|Du|^{a-1}\langle D u,\nu\rangle \,d\sigma.
\end{equation}
and 
\begin{equation}\label{small boundary curvature}
\int_{\partial\Omega'}T_{a-2}(\overline{D}^{2}u)^{\alpha\beta}u_{n}u_{\alpha}\mu_{\beta}=\int_{\partial\Omega'} \sigma_{a-2}(\kappa_{\partial \Omega'})|\overline{D}u|^{a-2}\langle\overline{D} u,\mu\rangle u_n dl.
\end{equation} 

By Theorem~\ref{one expression of mass}, together with \eqref{lower bound of sigma Hessian}, \eqref{lower bound of tangential hessian u}, \eqref{boundary curvature}, and \eqref{small boundary curvature}, we conclude
\begin{align*}
\mathfrak{m}_{a,B}(g)
\ge &\frac{a!(n-a)!}{2(n-1)!\omega_{n-1}}\bigg(\int_{\mathbb{R}^n_+\backslash\Omega}\sigma_a(A)+\int_{\mathbb{R}^{n-1}\backslash\Omega'}\sum_{k=0}^{a-1}C_2(n,a,k)\sigma_k(A^T)u_n^{2a-2k-1}
\bigg)\no\\
&+\frac{(a-1)!(n-a)!}{2(n-1)!\omega_{n-1}}\bigg(\int_{\partial\Omega'} \sigma_{a-2}(\kappa_{\partial \Omega'})|\overline{D}u|^{a-2}\langle\overline{D} u,\mu\rangle u_n dl \no\\
&+\int_{\partial\Omega} \sigma_{a-1}(\kappa_{\partial \Omega})|Du|^{a-1}\langle D u,\nu\rangle d\sigma\bigg)\no\\
&+\frac{(n-2 a)}{2^{a+1} \omega_{n-1}}\int_{\mathbb{R}^n_+\backslash\Omega}|Du|^{2 a}+\frac{a(n-2a+1)}{2^{a}(n-1)\omega_{n-1}}\int_{\mathbb{R}^{n-1}\backslash\Omega'}|\overline{D} u|^{2 (a-1)}u_n.\no
\end{align*}
With
\begin{align}
&\int_{\partial\Omega'} \sigma_{a-2}(\kappa_{\partial \Omega'})|\overline{D}u|^{a-2}\langle\overline{D} u,\mu\rangle u_n\,dl\no\\
=&\int_{\partial\Omega'} \sigma_{a-2}(\kappa_{\partial \Omega'})\left(\frac{\sigma_1(\kappa_{\partial \Omega})}{n-1}\right)^a|\sin \theta|^{a-2}\cos \theta\sin\theta\,dl,
\end{align}
we can rewrite the quantities under the metric $g=e^{-2u}g_{\mathbb{E}}$ and obtain the desired inequality.
\end{proof}
Theorem~\ref{thm on CF} follows from Theorem~\ref{lower bound of mass}.

\iffalse
Theorem \ref{thm1 on CF} and Theorem \ref{thm on CF} can be deduced by Theorem \ref{lower bound of mass} with the following argument:
\begin{proof}[Proof of Theorem \ref{lower bound of mass}]
Since the mean curvature $h_g$ of $\partial M$  is non-negative, we have $u_n\ge 0$ and due to $u_n=|Du|\cos\theta$, we get that $0\le \theta\le \pi/2$. 
On $\partial \Omega$, $D^2u(e_\alpha,e_\beta)=h_{\alpha\beta} u_{\nu}$, where $h_{\alpha\beta}$ is the second fundamental form of $\partial \Omega$ in $\mathbb{R}^n$ and $e_\alpha,e_\beta$ are unit tangential vectors on $\partial \Omega$.
Since $g\in \overline \Gamma_a^+$, we know that $A_{\alpha \beta}=D^2u(e_\alpha,e_\beta)-\frac{|Du|^2}{2}\delta_{\alpha\beta}\in \overline \Gamma_{a-1}^+$ and then $h_{\alpha\beta} u_{\nu}\in \overline \Gamma_{a-1}^+$. Due to $u_\nu>0$ on $\partial \Omega$, $h_{\alpha\beta}\in \overline\Gamma_{a-1}^+$ and then $\sigma_{a-1}(\kappa_{\partial \Omega})\ge 0$. Since $\{A_g(\frac{\partial}{\partial x^{\alpha}},\frac{\partial}{\partial x^{\beta}})\}_{(n-1)\times(n-1)}\in \overline\Gamma_{a-1}^+$ on $\partial \mathbb{R}^n_+$ and $\langle \overline Du,\mu\rangle=|Du|\sin\theta>0$, by similar argument for $\partial \Omega$, we know that $h'_{\alpha\beta}\in \overline\Gamma_{a-2}^+$ on $\partial \Omega'$, where $h'_{\alpha\beta}$ is the second fundamental form of $\partial \Omega'$ in $\partial \mathbb{R}^n.$
\end{proof}

\fi

\appendix
\section{Appendix}

\subsection{Second fundamental form of the boundary of a graph}
Let $f\colon\mathbb{R}_{+}^{n}\setminus\Omega\to\mathbb{R}$ be a $C^{2}$ function up to the boundary. Let $M=(x,f(x))\subset\mathbb{R}^{n+1}$ be its graph, equipped with the induced metric $g_{ij}=\delta_{ij}+f_i f_j$. Then
\[
\partial M=\bigl\{(x',0,f(x',0))\bigr\}.
\]
The notation in this subsection is consistent with Section~3.
\begin{lem}\label{lem:second fundametal in graph}
The second fundamental form of $\partial M$ (as a hypersurface of $(M,g)$) is given by
\[
L_{\alpha\beta}=h_{\alpha\beta}^{M,\partial M}
=\frac{f_{\alpha\beta}f_{n}}{\sqrt{(1+|Df|^{2})(1+|\overline{D}f|^{2})}},
\qquad 1\le\alpha,\beta\le n-1,
\]
where $f_i=\partial_i f$, $f_{\alpha\beta}=\partial_{\alpha}\partial_{\beta}f$, and $Df=(f_1,\dots,f_n)=:(\overline{D}f,f_n)$.
\end{lem}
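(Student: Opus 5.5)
We compute $L_{\alpha\beta}$ directly from the ambient embedding, using the paper's convention $L_{\alpha\beta}=\langle\nabla_{\partial_\alpha}\nu,\partial_\beta\rangle_g$ with $\nu$ the unit normal to $\partial M$ inside $(M,g)$. We view $M$ as the graph in $\mathbb{R}^{n+1}$ and work with the coordinate frame $X_i=\partial_i+f_i\,e_{n+1}$ for $i=1,\dots,n$. The Levi-Civita connection of $g$ is the tangential projection of the Euclidean derivative.

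\textbf{Step 1: the conormal.} The tangent space of $\partial M$ is spanned by $X_\alpha$, $1\le\alpha\le n-1$. We look for $N=\sum_i c^i X_i\in TM$ with $g(N,X_\alpha)=0$ for all $\alpha$. Since $g^{ij}=\delta_{ij}-f_if_j/w^2$, where $w^2=1+|Df|^2$, the vector $g^{nj}X_j$ has exactly this property. So we take
\[
N=\pm\frac{g^{nj}X_j}{\sqrt{g^{nn}}}.
\]
Here $g^{nn}=(1+|\overline{D}f|^2)/w^2$, so $\sqrt{g^{nn}}=\sqrt{1+|\overline{D}f|^2}/w$. The sign is fixed by requiring $N$ to point outward, i.e.\ toward $x^n<0$. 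As an ambient vector,
\[
g^{nj}X_j=e_n-\frac{f_n}{w^2}\bigl(Df,\ |Df|^2\bigr)+f_n\,e_{n+1}.
\]

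\textbf{Step 2: differentiate.} We have $\langle\nabla_{X_\alpha}N,X_\beta\rangle=-\langle N,\nabla_{X_\alpha}X_\beta\rangle$, because $\langle N,X_\beta\rangle=0$ along $\partial M$. Moreover $\nabla_{X_\alpha}X_\beta$ is the tangential part of $D_{X_\alpha}X_\beta=f_{\alpha\beta}\,e_{n+1}$, and $N$ is tangent to $M$. Hence
\[
L_{\alpha\beta}=\mp\, f_{\alpha\beta}\,\frac{\langle g^{nj}X_j,\ e_{n+1}\rangle}{\sqrt{g^{nn}}}.
\]
The Euclidean $e_{n+1}$-component of $g^{nj}X_j$ is $g^{nj}f_j=f_n-f_n|Df|^2/w^2=f_n/w^2$. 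Substituting gives
\[
|L_{\alpha\beta}|=\frac{|f_{\alpha\beta}f_n|}{w^2}\cdot\frac{w}{\sqrt{1+|\overline{D}f|^2}}
=\frac{|f_{\alpha\beta}f_n|}{\sqrt{(1+|Df|^2)(1+|\overline{D}f|^2)}},
\]
which is the claimed magnitude.

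\textbf{Step 3: the sign (main obstacle).} The only delicate point is the overall sign, which depends on two conventions: the outward direction of $\nu$ and the order in the definition $L_{\alpha\beta}=\langle\nabla_{\partial_\alpha}\nu,\partial_\beta\rangle$. The sign must agree with the usage in Section 3, in particular with $R_{i_1i_2}{}^{nj_2}=L^{j_2}_{i_2,i_1}-L^{j_2}_{i_1,i_2}$ and with $L_{\alpha\beta}=e^{-u}u_n\delta_{\alpha\beta}$ in the conformal case.

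We calibrate it on the conformal example: for $g=e^{-2u}g_{\mathbb{E}}$ the paper has $L_{\alpha\beta}=e^{-u}u_n\delta_{\alpha\beta}$, which is positive for $u_n>0$. That example corresponds to the choice $\nu=+g^{nj}\partial_j/\sqrt{g^{nn}}$, the normalized inward vector entering the paper's formula with a plus sign. With the same choice of sign here, the result is $+f_{\alpha\beta}f_n/\sqrt{(1+|Df|^2)(1+|\overline{D}f|^2)}$. We will check this by linearizing around the flat case: there $L_{\alpha\beta}\approx f_{\alpha\beta}f_n$, which matches the boundary computation in Section 3.
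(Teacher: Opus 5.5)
Your proposal is correct and is essentially the paper's argument. Your $g^{nj}X_j/\sqrt{g^{nn}}$ is exactly the paper's Gram--Schmidt vector $\nu^*=(-f_n\overline{D}f,\,1+|\overline{D}f|^2,\,f_n)/\sqrt{(1+|Df|^2)(1+|\overline{D}f|^2)}$, and the paper likewise reads off $L_{\alpha\beta}=\langle\nu^*,f_{\alpha\beta}e_{n+1}\rangle$. The sign in Step 3 needs no calibration: since $g^{nj}X_j=\nabla_g x^n$ points into $\{x^n>0\}$, the outward choice is the lower sign, and your Step 2 then gives the plus sign directly (the proposed check against Section 3 would be circular, since Section 3 invokes this lemma).
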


\begin{proof}
Let $\nu^{*}$ be the unit normal to $\partial M$ in $(M,g)$. Using Gram--Schmidt, one may write $\nu^{*}$ as the following formal determinant (interpreted by cofactor expansion along the vector row):
\[
\nu^{*}=\frac{1}{\sqrt{(1+|Df|^{2})(1+|\overline{D}f|^{2})}}\left|\begin{array}{ccccc}
\langle\vec{v}_{1}, \vec v_{1}\rangle &  \langle\vec v_{2},\vec v_{1} \rangle & \cdots &  & \langle\vec v_{n},\vec v_{1}\rangle\\
\langle\vec v_{1}, \vec v_{2}\rangle & \langle\vec v_{2},\vec v_{2}\rangle &  &  & \langle\vec v_{n},\vec v_{2}\rangle\\
\vdots & \vdots &  &  & \vdots\\
\langle\vec v_{1},\vec v_{n-1}\rangle &  &  &  & \langle\vec v_{n},\vec v_{n-1}\rangle\\
\vec v_{1} &\vec  v_{2} & \cdots &  & \vec v_{n}
\end{array}\right|,
\]
where $\vec v_i=(e_i,f_i)\in\mathbb{R}^{n+1}$, $e_i$ is the $i$th standard basis vector in $\mathbb{R}^{n}$, and
$\langle \vec v_i,\vec v_j\rangle:=\delta_{ij}+f_i f_j$.
A direct computation gives
\begin{align*}
\nu^{*}
&=\frac{-\sum_{i=1}^{n-1}f_{i}f_{n}\vec v_{i}+(1+|\bar{D}f|^{2})\vec v_{n}}{\sqrt{(1+|Df|^{2})(1+|\overline{D}f|^{2})}}\\
&=\frac{(-f_{n}\bar{D}f,\,1+|\bar{D}f|^{2},\,f_{n})}{\sqrt{(1+|Df|^{2})(1+|\overline{D}f|^{2})}},
\end{align*}
so that $\langle \nu^{*},\nu^{*}\rangle=1$ and $\langle \vec v_i,\nu^{*}\rangle=0$ for $i=1,\dots,n-1$.

For $i,j\in\{1,\dots,n-1\}$ we have
\[
\sqrt{(1+|Df|^{2})(1+|\overline{D}f|^{2})}\,\langle\nu^{*},(0,\dots,0,f_{ij})\rangle=f_{n}f_{ij},
\]
and hence
\begin{align*}
L_{ij}
&=\langle \nu^{*},D_{\vec v_{i}}\vec v_{j}\rangle
=\langle\nu^{*},(0,\dots,0,f_{ij})\rangle
=\frac{f_{n}f_{ij}}{\sqrt{(1+|Df|^{2})(1+|\overline{D}f|^{2})}}.
\end{align*}
\end{proof}

\bibliography{bibmass}
\bibliographystyle{amsplain}

\end{document}